\documentclass[12pt,reqno]{amsart}
\usepackage{amsmath,amssymb,latexsym}
\usepackage{amstext}
\usepackage{extarrows}
\usepackage{enumitem}
\usepackage{a4wide}
\usepackage{graphicx}
\usepackage{appendix}
\allowdisplaybreaks \numberwithin{equation}{section}
\usepackage{color}

\usepackage[colorlinks,urlcolor=blue]{hyperref}

\numberwithin{equation}{section}

\newtheorem{theorem}{Theorem}[section]
\newtheorem{proposition}[theorem]{Proposition}
\newtheorem{corollary}[theorem]{Corollary}
\newtheorem{lemma}[theorem]{Lemma}

\theoremstyle{definition}

\newtheorem{definition}[theorem]{Definition}

\theoremstyle{remark}
\newtheorem{remark}[theorem]{Remark}

\newcommand{\hsgm}{\mathcal{H}^{\sigma}}
\newcommand{\veps}{\varepsilon}
\newcommand{\dlt}{\delta}
\newcommand{\ptl}{\partial}
\newcommand{\bk}{\mathbf{k}}
\newcommand{\gk}{\mathbf{K}}
\newcommand{\bm}{\mathbf{m}}
\newcommand{\bn}{\mathbf{n}}

\newcommand{\bv}{\mathbf{v}}
\newcommand{\br}{\mathbb{R}}
\newcommand{\bx}{\boldsymbol{x}}
\newcommand{\by}{\boldsymbol{y}}
\newcommand{\gz}{\boldsymbol{z}}
\newcommand{\bz}{\mathbb{Z}}
\newcommand{\balpha}{\boldsymbol{\alpha}}

\newcommand{\bxi}{\boldsymbol{\xi}}
\newcommand{\bzeta}{\boldsymbol{\zeta}}
\newcommand{\bla}{\big\langle}
\newcommand{\bra}{\big\rangle}
\newcommand{\bav}{\Big|}

\newcommand{\bkappa}{\boldsymbol{\kappa}}
\newcommand{\ml}{\mathcal{L}}
\newcommand{\bp}{\boldsymbol{p}^{\sigma}}
\newcommand{\vbl}{\mathcal{L}_{\perp}^{^{-1}}}

\begin{document}

\title[Wave packets in fNLS with a honeycomb potential]
{Wave packets in the fractional nonlinear Schr\"odinger equation with a honeycomb potential}

\author{Peng Xie}
\author{Yi Zhu}

\address{Zhou Pei-Yuan Center for Applied Mathematics, Tsinghua University, Beijing 10084, P.R. China}
\email{xiep14@mails.tsinghua.edu.cn}
\address{Yau Mathematical Sciences Center and Department of Mathematical Sciences, Tsinghua University, Beijing 10084, P.R. China}
\email{yizhu@tsinghua.edu.cn}

\subjclass[2010]{35Q41, 35Q60, 35C20, 35R11, 35P05}

\date{\today}

\keywords{Fractional Schr\"odinger equation, Honeycomb structure, Effective dynamics}

\maketitle


\begin{abstract}
In this article, we study wave dynamics in the fractional nonlinear Schr\"odinger equation with a modulated honeycomb potential. This problem arises from recent research interests in the interplay between topological materials and nonlocal governing equations. Both are current focuses in scientific research fields. We first develop the Floquet-Bloch spectral theory of the linear fractional Schr\"odinger operator with a honeycomb potential. Especially, we prove the existence of conical degenerate points, i.e., Dirac points, at which two dispersion band functions intersect. We then investigate the dynamics of wave packets spectrally localized at a Dirac point and derive the leading effective envelope equation. It turns out the envelope can be described by a nonlinear Dirac equation with a varying mass. With rigorous error estimates, we demonstrate that the asymptotic solution based on the effective envelope equation approximates the true solution well in the weighted-$H^s$ space.
\end{abstract}

\section{Introduction}

This work is concerned with the following fractional nonlinear Schr\"odinger equation (fNLS) with a modulated honeycomb potential
\begin{equation}\label{eq:fnls}
\text{i}\veps\ptl_t\psi =(-\veps^2\Delta_{\bx})^{\frac{\sigma}{2}}\psi+V(\frac{\bx}{\veps})\psi+\veps\kappa(\bx)W(\frac{\bx}{\veps})\psi+\veps\mu|\psi|^2\psi,\quad (t\in \br^+,~\bx\in\br^2),
\end{equation}
where $\psi=\psi(t, \bx)$ represents the wave field, $1<\sigma\leqslant2$ is the fractional parameter, $\mu=\pm1$ stands for the focusing or defocusing coefficient, $V(\cdot),~W(\cdot)\in C^{\infty}(\br^2,\br)$ are periodic potentials, and $\kappa(\cdot)\in C^{\infty}(\br^2,\br)$ is the bounded modulation. The small parameter $0<\veps\ll 1$ describes the ratio between microscopic and macroscopic scales. The definitions of fractional Laplacian and other assumptions are given in Section \ref{sec:preli}. It is well known that the nonlinear Schr\"odinger equation is the standard model in many wave systems and has been widely studied in different aspects \cite{sulem2007nonlinear}. The equation under consideration in this work has two new ingredients---fractional Laplacian and honeycomb potential with a spacially varying modulation. Both terminologies are current focuses due to recent advances in nonlinear optics, topological quantum mechanics, meta-materials \cite{ammari2018honeycomb,ammari2018high,laskin2000fractionallevy,longhi2015fractional,rechtsman2013Photonic,zhang2015propagation}.

From the application point of view, a key problem is to understand the wave dynamics governed by \eqref{eq:fnls}. The aim of our current work is to derive and justify the effective envelope equation which only has the macroscopic scale with the high oscillations being homogenized. This is a very efficient and useful treatment to understand complicated wave dynamics. To this end, we make the following achievements. We first develop the spectral theory of the fractional Schr\"odinger operator with a honeycomb potential. Using the Fourier series to define the fractional Laplacian on a bounded domain with quasi-periodic boundary conditions, we find that the Floquet-Bloch theory still applies for the fractional Sch\"odinger operator. Considerably modifying the strategies developed by Fefferman and Weinstein for the standard Schr\"odinger operator with a honeycomb potential \cite{fefferman2012honeycomb}, we prove that the honeycomb structured fractional Schr\"odinger operator $\hsgm=(-\Delta)^{\frac{\sigma}{2}}+V(\cdot)$ has two conically degenerate points, a.k.a, Dirac points, at $\mathbf{K}$ and $\mathbf{K}'$ points, see Theorem \ref{thm:diracpoint} in Section \ref{sec:dirac}.

Then we analyze the wave packets spectrally localized at a Dirac point with the envelope scale matches the spacial modification and nonlinearity. The leading macroscopic envelope is governed by a nonlinear Dirac equation with a varying mass coming from the spacial modulation to the honeycomb potential.  The mathematical justification of this derivation is given by rigorous error estimates in weighted-$H^s$ space in Section \ref{sec:main} for $0<\veps\ll 1$. More specifically, Theorem \ref{thm:leading} and \ref{thm:main} show that the solution $\psi^{\veps}$ to \eqref{eq:fnls} together with initial data $\psi_0^{\veps}$ are spectrally localized at the Dirac point, and can be approximated by the leading term
\begin{equation}\label{id:approxsolu}
\psi^{\veps}(t, \bx)\sim e^{-\mathrm{i}E_D \frac{t}{\veps}}\big(\alpha_1(t,\bx)\Phi_1(\frac{\bx}{\veps})+\alpha_2(t,\bx)\Phi_2(\frac{\bx}{\veps})\big),\quad \veps\rightarrow 0,
\end{equation}
where $E_D$ is the degenerate eigenvalue, and  the amplitudes $\alpha_1(t,\bx),~\alpha_2(t,\bx)$ solve the following nonlinear Dirac equation with a varying mass:
\begin{equation}\label{eq:diracalpha}
\left\{
\begin{aligned}
\ptl_t\alpha_1+v_F^{\sigma}(\ptl_{x_1}+\text{i}\ptl_{x_2})\alpha_2+\text{i}\vartheta\kappa(\bx)\alpha_1+\text{i}\mu (b_1|\alpha_1|^2+ b_2|\alpha_2|^2)\alpha_1&=0  \\
\ptl_t\alpha_2+v_F^{\sigma}(\ptl_{x_1}-\text{i}\ptl_{x_2})\alpha_1-\text{i}\vartheta\kappa(\bx)\alpha_2+\text{i}\mu( b_2|\alpha_1|^2+ b_1|\alpha_2|^2)\alpha_2&=0
\end{aligned}~,
\right.
\end{equation}
with initial datum $\alpha_j(0,\bx)=\alpha_{j_0}(\bx)\in \mathcal{S}(\br^2),~j=1,~2$. Here $v_F^{\sigma},~b_1,~b_2,~\vartheta$ are given real-valued constants.

Mathematically, this work is related to the studies of semi-classical solutions or WKB solutions to dispersive wave systems with periodic coefficients \cite{ablowitz2013Nonlinear2,allaire2011diffractive,arbunich2018rigorous,e2013asymptotic,jin2011mathematical,pelinovsky2011localization}. Regardless of the important insights of this work in the applied fields, the mathematical challenges include dealing with the fractional Laplacian $(-\Delta)^{\frac{\sigma}{2}}$ and the spectral degeneracy caused by the honeycomb symmetry. In the literature, both the fractional Laplacian and honeycomb potentials have been considerably investigated. For example, the fractional Schr\"odinger equation was first proposed by N. Laskin \cite{laskin2000fractionallevy} in quantum mechanism, and then it was found to be useful in optics \cite{longhi2015fractional,zhang2015propagation}. This model has some significant difference from its standard counterpart \cite{caffarelli2007extension,davila2014concentrating,hong2017new}. On the other hand, two-dimensional honeycomb materials possess subtle physical properties and broad prospect of applications, and it turns to be one of most successful example to understand and realize the topological phenomena \cite{ammari2018honeycomb,ammari2018high,bal2019continuous,geim2007The,guo2019bloch,haldane2008Possible,joannopoulos2011photonic,longhi2015fractional,raghu2008analogs}. The most interesting characterization of this structure is the existence of the conical degenerate spectral point lying the dispersion surfaces. Fefferman, Weinstein and their collaborators proved the existence of Dirac points lying on the dispersion surfaces of the honeycomb latticed standard Sch\"odinger and divergence elliptic operators via Lyapunov-Schmidt reduction strategy \cite{fefferman2017topologically,fefferman2012honeycomb,keller2018spectral,lee2019elliptic}. Then, the time evolution of linear and nonlinear wave packet propagation spectrally concentrated around the Dirac point has been rigorously studied, and the effective dynamics is governed by corresponding Dirac equations \cite{ablowitz2010Evolution,ablowitz2013Nonlinear2,hu2019linear,arbunich2018rigorous,fefferman2014wave,xie2019wave}. Merging the two terminologies together in a semi-classical nonlinear evolution equation, we need to deal with mathematical challenges. Examples include formulations of fractional derivatives acting on quasi-periodic functions, asymptotics of the fractional Laplacian acting on a wave packet with highly oscillating Floquet-Bloch modes, homogenization of such modes with degenerate eigenvalues and so on. The results also shed some light on the rigorous analysis of topologically protected wave propagation in honeycomb-based media if additional assumptions are added to the slowly varying modulations \cite{drouot2020edge,fefferman2017topologically,hu2019linear,lee2019elliptic}.

This paper will be organized as follows: In section \ref{sec:preli} and \ref{sec:dirac}, we briefly review the Floquet-Bloch theory from honeycomb latticed fractional Sch\"odinger operator and verify the existence of Dirac point. From the \ref{sec:main}th section, we start to derive the effective dynamics of wave packet problem by fractional nonlinear Schr\"odinger equation. We put the rigorous proof in section \ref{sec:proof} and apply a micro-scaled Bloch decomposition method to derive the macro problem by rescaling.

\section{Preliminaries}\label{sec:preli}

\subsection{Floquet-Bloch theory for fractional Schr\"odinger operator}

In this subsection, we introduce a brief description to the Floquet-Bloch theory for the fractional Schr\"odinger operator with a periodic potential \cite{bensoussan1978asymptotic,fefferman2012honeycomb,kuchment2012floquet,roncal2016fractional}.

The hexagonal lattice $\Lambda$ is generated by two linear independent vectors $\bv_1~\text{and}~\bv_2$, i.e.,
\begin{equation}
\Lambda=\mathbb{Z}\bv_1\oplus\mathbb{Z}\bv_2,~\quad\bv_1=
\begin{pmatrix}
\frac{\sqrt{3}}{2} \\
\frac{1}2
\end{pmatrix},\quad
\bv_2=
\begin{pmatrix}
\frac{\sqrt{3}}{2} \\
-\frac1{2}
\end{pmatrix},
\end{equation}
and the fundamental cell $\Omega= \{\theta_1\bv_1+\theta_2\bv_2: 0\leqslant\theta_j\leqslant 1,~ j=1,~ 2\}$. The corresponding dual lattice and fundamental cell are $\Lambda^*=\mathbb{Z}\bk_1\oplus\mathbb{Z}\bk_2$ and $\Omega^*= \{\theta_1\bk_1+\theta_2\bk_2: -1/2\leqslant\theta_j\leqslant 1/2,~ j=1,~ 2\}$, where two dual vectors $\bk_1,~\bk_2$ satisfy $\bv_i\cdot\bk_j=2\pi\dlt_{ij}$.

We introduce the following function spaces:
\begin{eqnarray}
&L^2(\br^2/\Lambda)=\left\{\phi(\by)\in L^2_{\text{loc}}\left(\mathbb{R}^2,\mathbb{C}\right):~\phi(\by+\bv)=\phi(\by),~ \forall~ \bv \in \Lambda,~ \by \in \mathbb{R}^2 \right\}, &\\
&\text{and}\quad L^2_{\bk}(\br^2/\Lambda)=\{\Phi(\by):\  e^{-\mathrm{i}\bk\cdot \by}\Phi(\by)\in L^2(\br^2/\Lambda) \}.&
\end{eqnarray}
Note that functions in $L^2_{\bk}(\br^2/\Lambda)$ are quasi-periodic. Namely, if $\Phi(\by) \in L^2_{\bk}(\br^2/\Lambda)$, then $\Phi(\by+\bv)=e^{\mathrm{i}\bk\cdot\bv}\Phi(\by), \forall~ \bv \in \Lambda$.
Similarly, we can also define $H_{\bk}^s(\br^2/\Lambda)$ and $C_{\bk}^{\infty}(\br^2/\Lambda)$ in a standard way.

The standard Fourier expansion of $\phi\in L^2(\br^2/\Lambda)$ is given as follows
\begin{equation}
	\phi(\by)=\sum_{\bm\in\mathbb{Z}^2}\hat{\phi}(\bm\vec{\bk}) e^{\text{i}\bm\vec{\bk}\cdot\by},\quad \hat{\phi}(\bm\vec{\bk})=\frac{1}{|\Omega|}\int_{\Omega}e^{-\text{i}\bm\vec{\bk}\cdot\by}\phi(\by)d\by.
\end{equation}
where $\bm=(m_1,m_2)\in \mathbb{Z}^2$ and $\bm\vec{\bk}=m_1\bk_1+m_2\bk_2$ for notational convenience. Now $\forall~\bk\in\Omega^*$, the Fourier series of $\Phi\in L^2_{\bk}(\br^2/\Lambda)$ is
\begin{equation}
	\Phi(\by)=\sum_{\bm\in\mathbb{Z}^2}\hat{\Phi}(\bk+\bm\vec{\bk})e^{\text{i}(\bk+\bm\vec{\bk})\cdot\by},\quad \hat{\Phi}(\bk+\bm\vec{\bk})=\frac{1}{|\Omega|}\int_{\Omega}e^{-\text{i}(\bk+\bm\vec{\bk})\cdot\by}\Phi(\by)d\by.
\end{equation}

As it will be seen later, we need to deal with the fractional Laplacian derivative on functions in $H_{\bk}^{\sigma}(\br^2/\Lambda)$. In this work, it is natural to introduce the following definition, for any $\Phi\in H_{\bk}^{\sigma}(\br^2/\Lambda)$
\begin{equation}\label{id:delta}
	(-\Delta)^{\frac{\sigma}{2}}\Phi(\by)=\sum_{\bm\in\mathbb{Z}^2}|\bk+\bm\vec{\bk}|^{\sigma}\hat{\Phi}(\bk+\bm\vec{\bk})e^{\text{i}(\bk+\bm\vec{\bk})\cdot\by}=\sum_{\bm\in\mathbb{Z}^2}|\bk+\bm\vec{\bk}|^{\sigma}\hat{\phi}(\bm\vec{\bk})e^{\text{i}(\bk+\bm\vec{\bk})\cdot\by},
\end{equation}
where $\phi(\by)=e^{-\text{i}\bk\cdot\by}\Phi(\by)$.

Meanwhile, if $f\in H^{\sigma}(\br^2)$, we also introduce the fractional Laplacian in the sense of Fourier transform on $\br^2$ raised by \cite{caffarelli2007extension,laskin2000fractionallevy}.
\begin{equation}\label{laplacian:whole}
	(-\Delta)^{\frac{\sigma}2}f(\bx)=\int_{\br^2}|\bzeta|^{\sigma}~\widehat{f}(\bzeta) e^{\mathrm{i}\bzeta\cdot\bx}d\bzeta,\quad \widehat{f}(\bzeta)=\frac{1}{4\pi^2}\int_{\br^2}e^{-\mathrm{i}\bzeta\cdot\bx}f(\bx)d\bx.
\end{equation}
By Plancherel theory, it is also equivalent to the following singular integral,
\begin{align}
	(-\Delta)^{\frac{\sigma}2}f(\bx)=C_{2,\sigma}\text{P.V.}\int_{\br^2}\frac{f(\bx)-f(\gz)}{|\bx-\gz|^{2+\sigma}} d\gz.
\end{align}
P.V. represents the $Principal~Value$, and $C_{2,\sigma}$ is a given normalized constant \cite{caffarelli2007extension,lemm2016holder,secchi2013ground}.

With the above definition of fractional Laplacian, the Floquet-Bloch theory applies. Assume that the potential $V(\by)$ is smooth, real-valued and $\Lambda-$periodic, then $\hsgm=(-\Delta)^{\frac{\sigma}{2}}+V(\by)$ is self-adjoint. $\forall~\bk\in\Omega^*$, consider the following eigenvalue problem,
\begin{equation}\label{id:eigenprob:pseodu}
\hsgm\Phi(\by;\bk)=E(\bk)\Phi(\by;\bk),\quad \Phi(\by;\bk)\in L^2_{\bk}(\br^2/\Lambda).
\end{equation}
Alternatively, let $\phi(\by;\bk)=e^{-\text{i}\bk\cdot\by}\Phi(\by;\bk)\in L^2(\br^2/\Lambda)$ and $\hsgm(\bk)=e^{-\text{i}\bk\cdot\by}\hsgm e^{\text{i}\bk\cdot\by}$, then the eigenvalue problem on the torus turns into
\begin{align}\label{id:eigenprob:periodic}
	\hsgm(\bk)\phi(\by;\bk)=\sum_{\bm\in\mathbb{Z}^2}|\bk+\bm\vec{\bk}|^{\sigma}\hat{\phi}(\bm\vec{\bk};\bk)e^{\text{i}\bm\vec{\bk}\cdot\by}+V(\by)\phi(\by;\bk)=E(\bk)\phi(\by;\bk).
\end{align}

According to Hilbert-Schimdt theorem for elliptic operators \cite{reed1978methods}, we can develop the Floquet-Bloch theory for the fractional Schr\"odinger operator with quasi-periodic boundary eigenvalue problem in \eqref{id:eigenprob:pseodu}. Namely, the following statements hold
\begin{enumerate}
	\item There exists an ordered eigenvalue series for each $\bk\in\Omega^*$,
	\begin{equation}\label{eigenv:order}
		E_1(\bk)\leqslant E_2(\bk)\leqslant\cdots E_b(\bk)\leqslant\cdots,
	\end{equation}
	and $E_b(\bk)\rightarrow+\infty$ as $b\rightarrow+\infty$. Moreover, for each $\bk\in\Omega^*$, $\{\phi_b(\bx;\bk)\}_{b\geqslant1}$ is a complete orthogonal set in $L^2(\br^2/\Lambda)$.
	
	\item The eigenvalues $E_b(\bk)$, referred as dispersion bands, are Lipschitz continuous via a similar discussion as the standard Schr\"odinger operator \cite{fefferman2014wave}.
	
	\item For each $b\geqslant1$, $E_b(\bk)$ sweeps out a closed real interval over $\bk\in\Omega^*$, the union of these intervals actually compose of the spectrum of $\hsgm$ in $L^2(\br^2)$,
		\begin{equation}
			spec(\hsgm)=\bigcup_{b\geqslant1,\bk\in\Omega^*}\Big[\min_{\bk\in\Omega^*}E_b(\bk),\max_{\bk\in\Omega^*}E_b(\bk)\Big].
		\end{equation}
	
\end{enumerate}

Actually, for each $\bk\in\Omega^*$, $\Phi_b(\by;\bk) $(or $\phi_b(\by;\bk)$) is smooth on $\Omega$. All Bloch eigenfunction set $\bigcup\limits_{b\geqslant1,\bk\in\Omega^*}\{\Phi_b(\by;\bk)\}$ forms a complete orthogonal set of $L^2(\br^2)$, i.e., for any $f\in L^2(\br^2)$, the following summation converges in $L^2-$norm,
\begin{align}
	f(\by)=\frac1{|\Omega^*|}\sum_{b\geqslant 1}\int_{\Omega^*}\widetilde{f}_b(\bk) \Phi_b(\by;\bk)d\bk,\quad \widetilde{f_b}(\bk)=\bla\Phi_b(\by;\bk), f(\by)\bra=\int_{\br^2} \overline{\Phi_b(\by;\bk)}f(\by)d\by,
\end{align}
and one can also establish a type of Plancherel theory
\begin{equation}
\|f(\by)\|^2_{L^2(\br^2)}=\frac1{|\Omega^*|}\sum_{b\geqslant 1}\int_{\Omega^*}\big| \widetilde{f}_b(\bk)\big|^2 d\bk.
\end{equation}
For any $s\geqslant 0$, if $f\in H^s(\br^2)$, it is natural to show the equivalence of $H^s-$norm,
\begin{equation}\label{id:hsnorm}
\|f(\by)\|^2_{H^s(\br^2)} \approx \|(I+\hsgm)^{\frac{s}{\sigma}}f(\by)\|^2_{L^2(\br^2)} = \frac1{|\Omega^*|}\sum_{b\geqslant 1}\int_{\Omega^*}\big| 1+E_b(\bk)\big|^{\frac{2s}{\sigma}}\big|\widetilde{f}_b(\bk)\big|^2 d\bk.
\end{equation}

Up to now, we have built the Floquet-Bloch theory for fractional Schr\"odinger operator. In the next, we will clarify the rotational invariance in the context of honeycomb potential.

\subsection{Honeycomb potential}

The rotational invariant property is a novel hypothesis in the honeycomb potential. We define the rotation operator $\mathcal{R}$ as follows. For any function $f(\by)$ defined on $\br^2$,
\begin{equation}
\mathcal{R}f(\by):=f(R^*\by),
\end{equation}
where $R$ is the $2\pi/3-$clockwise rotation matrix
\begin{equation}
	R=
\begin{pmatrix}
-\frac12 & \frac{\sqrt{3}}{2} \\
-\frac{\sqrt{3}}{2} & -\frac12
\end{pmatrix}.
\end{equation}

In present article we shall study the honeycomb potential in the sense of the following definition.
\begin{definition}[Honeycomb potential]\label{hc:potential}
	A real-valued function $V(\by)\in C^{\infty}(\br^2)$ is called as a honeycomb potential if the following properties hold:
	\begin{enumerate}
		\item[(1)] $V(\by)$ is even or inversion-symmetric, i.e., $V(-\by)=V(\by)$;
		\item[(2)] $V(\by)$ is $\Lambda-$periodic, i.e., $V(\by+\bv)=V(\by),~\forall~\bv\in\Lambda$;
		\item[(3)] $V(\by)$ is $\mathcal{R}-$invariant, i.e., $\mathcal{R}V(\by)= V(R^*\by)= V(\by)$.
	\end{enumerate}
\end{definition}

It will be seen later that $\mathcal{R}-$invariance plays a key role in the existence of the degenerate Dirac points. There are three high symmetry points with respect to $\mathcal{R}$ in $\Omega^*$. Indeed, 
\begin{equation}
\bk \in \Omega^*, ~\mathcal{R}\bk -\bk \in \Lambda^* \quad \text{if and only if } \quad \bk\in \big\{\boldsymbol{0},~ \frac13(\bk_1-\bk_2),~ -\frac13(\bk_1-\bk_2)\big\}.
\end{equation}
Denote $\gk=\frac13(\bk_1-\bk_2)$ and $\gk'=-\frac13(\bk_1-\bk_2)$. One can figure out later that the two points $\gk$ and $\gk'$ are essential. In this paper, we only consider $\gk$ and the analysis for $\gk'$ is similar. Thanks to the high symmetry, $\mathcal{R}: L^2_{\gk}(\br^2/\Lambda)\rightarrow L^2_{\gk}(\br^2/\Lambda)$ is isometric, and $\mathcal{R}^3=Id $, thus the eigenvalues of $\mathcal{R}$ are $1,~\tau=e^{\text{i}2\pi/3}$ and $\bar{\tau}$. We can divide $L^2_{\gk}(\br^2/\Lambda)$ into an orthogonal direct sum of eigenspaces of $\mathcal{R}$, see also in \cite{fefferman2012honeycomb,lee2019elliptic},
\begin{equation}\label{lk2decomp}
L^2_{\gk}(\br^2/\Lambda)=L^2_{\gk,1}\oplus L^2_{\gk,\tau}\oplus L^2_{\gk,\bar{\tau}},~
L^2_{\gk,\nu}=\{f\in L^2_{\gk}(\br^2/\Lambda)|\mathcal{R}f=\nu f\},~ \nu=1,~\tau,~\bar{\tau}.
\end{equation}

In addition, we have the following proposition.
\begin{proposition}
	The fractional Schr\"odinger operator $\hsgm$ commutes with $\mathcal{R}$ on $H^{\sigma}_{\gk}(\br^2/\Lambda)$. Namely, $[\mathcal{R},\hsgm]=\mathcal{R}\hsgm-\mathcal{R}\hsgm$ vanishes.
\end{proposition}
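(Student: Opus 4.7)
The plan is to split $\hsgm = (-\Delta)^{\sigma/2} + V$ and verify the commutator identity on each piece separately. For the multiplication operator, the $\mathcal{R}$-invariance of $V$ from Definition \ref{hc:potential}(3) gives immediately
\begin{equation*}
\mathcal{R}(V\Phi)(\by) = V(R^*\by)\Phi(R^*\by) = V(\by)(\mathcal{R}\Phi)(\by),
\end{equation*}
so $[\mathcal{R}, V] = 0$ pointwise. The real content of the proposition is therefore that $[\mathcal{R}, (-\Delta)^{\sigma/2}]$ vanishes on $H^{\sigma}_{\gk}(\br^2/\Lambda)$.

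For the fractional Laplacian I would work directly from the Fourier-series definition \eqref{id:delta}. Given $\Phi(\by)=\sum_{\bm\in\bz^2}\hat{\phi}(\bm\vec{\bk})e^{\text{i}(\gk+\bm\vec{\bk})\cdot\by}$, the elementary identity $e^{\text{i}\bxi\cdot R^*\by}=e^{\text{i}(R\bxi)\cdot\by}$ gives
\begin{equation*}
\mathcal{R}\Phi(\by)=\sum_{\bm\in\bz^2}\hat{\phi}(\bm\vec{\bk})\,e^{\text{i}R(\gk+\bm\vec{\bk})\cdot\by}.
\end{equation*}
Two facts from the hexagonal geometry then allow this to be reindexed as a quasi-periodic expansion at $\gk$: first, $\gk$ is a high-symmetry point, so there exists $\bl\in\bz^2$ with $R\gk=\gk+\bl\vec{\bk}$; second, since $R$ is a symmetry of the dual lattice $\Lambda^*$, there is an integer matrix $\tilde{R}\in\mathrm{GL}_2(\bz)$ with $R(\bm\vec{\bk})=(\tilde{R}\bm)\vec{\bk}$ for all $\bm\in\bz^2$. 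Setting $\bn=\bl+\tilde{R}\bm$,
\begin{equation*}
\mathcal{R}\Phi(\by)=\sum_{\bn\in\bz^2}\hat{\phi}\big(\tilde{R}^{-1}(\bn-\bl)\vec{\bk}\big)\,e^{\text{i}(\gk+\bn\vec{\bk})\cdot\by},
\end{equation*}
which incidentally reconfirms that $\mathcal{R}$ preserves $L^2_{\gk}(\br^2/\Lambda)$, as noted earlier.

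Applying \eqref{id:delta} to the two sides to be compared and using the same reindexing, I would then match the series term by term:
\begin{equation*}
(-\Delta)^{\sigma/2}\mathcal{R}\Phi(\by)=\sum_{\bn}|\gk+\bn\vec{\bk}|^{\sigma}\,\hat{\phi}\big(\tilde{R}^{-1}(\bn-\bl)\vec{\bk}\big)\,e^{\text{i}(\gk+\bn\vec{\bk})\cdot\by},
\end{equation*}
whereas
\begin{equation*}
\mathcal{R}(-\Delta)^{\sigma/2}\Phi(\by)=\sum_{\bn}\big|\gk+\tilde{R}^{-1}(\bn-\bl)\vec{\bk}\big|^{\sigma}\,\hat{\phi}\big(\tilde{R}^{-1}(\bn-\bl)\vec{\bk}\big)\,e^{\text{i}(\gk+\bn\vec{\bk})\cdot\by}.
\end{equation*}
The Fourier coefficients are identical by construction, and the eigenvalue factors agree because $R\in O(2)$ is an isometry: $|R\bxi|=|\bxi|$ yields $|\gk+\tilde{R}^{-1}(\bn-\bl)\vec{\bk}|=|R(\gk+\tilde{R}^{-1}(\bn-\bl)\vec{\bk})|=|\gk+\bn\vec{\bk}|$. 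This is the conceptual heart of the proposition: the nonlocal symbol $|\cdot|^{\sigma}$ is rotation-invariant, so it commutes with the pull-back by a rotation.

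The only non-routine point is the discrete bookkeeping under rotation, i.e., checking that $R$ genuinely maps $\Lambda^*$ to $\Lambda^*$ and that $R\gk-\gk\in\Lambda^*$; both follow from the hexagonal structure and the high-symmetry status of $\gk$ recorded just before the proposition. Convergence of the series in $L^2(\br^2/\Lambda)$ (and indeed in $H^{\sigma}_{\gk}$) is automatic from $\Phi\in H^{\sigma}_{\gk}(\br^2/\Lambda)$ together with the identity \eqref{id:hsnorm}, so the formal term-by-term identification is legitimate. Combining the two commutators $[\mathcal{R},V]=0$ and $[\mathcal{R},(-\Delta)^{\sigma/2}]=0$ then yields $[\mathcal{R},\hsgm]=0$, as claimed.
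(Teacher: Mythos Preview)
Your proof is correct and follows essentially the same route as the paper: both arguments reduce to the Fourier-series definition \eqref{id:delta}, use that $R$ permutes the shifted dual lattice $\gk+\Lambda^*$ together with $|R\,\cdot|=|\cdot|$ for the fractional Laplacian, and invoke the $\mathcal{R}$-invariance of $V$ for the potential part. The only cosmetic difference is that the paper first records the integral identity $\widehat{\mathcal{R}\Phi}(\gk+\bm\vec{\bk})=\hat{\Phi}\big(R^*(\gk+\bm\vec{\bk})\big)$ and treats $\hsgm$ as a whole, whereas you split $\hsgm=(-\Delta)^{\sigma/2}+V$ and make the reindexing explicit via the integer matrix $\tilde{R}$.
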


\emph{Proof.} For any $\Phi(\by)\in H^{\sigma}_{\gk}(\br^2/\Lambda)$, a direct calculation yields $\mathcal{R}\Phi(\by)\in H_{\gk}^{\sigma}(\br^2/\Lambda)$. Indeed, $\forall~\bv\in\Lambda$,
\begin{equation}
\mathcal{R}\Phi(\by+\bv) = \Phi(R^*(\by+\bv))= e^{\text{i}\gk\cdot R^*\bv}\Phi(R^*\by)	= e^{\text{i}R\gk\cdot \bv}\Phi(R^*\by)
= e^{\text{i}\gk\cdot \bv}\mathcal{R}\Phi(\by).
\end{equation}
Then, for any $\bm\in \mathbb{Z}^2$, the Fourier transform of $\mathcal{R}\Phi(\by)$ gives
\begin{align}\label{fourier:coeffrot}
\hat{\mathcal{R}\Phi}(\gk+\bm\vec{\bk}) = \int_{\Omega}\Phi(R^*\by)e^{-\text{i}R^*(\gk+\bm\vec{\bk})\cdot R^*\by}d\by = \int_{R^*\Omega}\Phi(\by)e^{-\text{i}R^*(\gk+\bm\vec{\bk})\cdot \by}d\by.
\end{align}
Using the fact of integral invariance between the domain $R^*\Omega$ and $\Omega$, we obtain $\hat{\mathcal{R}\Phi}(\gk+\bm\vec{\bk})=\hat{\Phi}\big(R^*(\gk+\bm\vec{\bk})\big)$.

Recalling that $V(\by)$ is $\mathcal{R}-$invariant, we immediately obtain
\begin{align}
\nonumber	\hsgm\big(\mathcal{R}\Phi(\by)\big) =& \sum_{\bm\in\mathbb{Z}^2}|\gk+\bm\vec{\bk}|^{\sigma}\hat{\Phi}\big(R^*(\gk+\bm\vec{\bk})\big)e^{\text{i}(\gk+\bm\vec{\bk})\cdot\by}+V(\by) \Phi(R^*\by) \\
\nonumber	=& \sum_{\bm\in\mathbb{Z}^2}|\gk+\bm\vec{\bk}|^{\sigma}\hat{\Phi}(\gk+\bm\vec{\bk})e^{\text{i}(\gk+\bm\vec{\bk})\cdot R^*\by}+V(\by) \Phi(R^*\by) \\
=&~ \mathcal{R}\big(\hsgm \Phi(\by)\big).
\end{align}

\section{Linear Spectrum---Existence of Dirac Points}\label{sec:dirac}

In this section, we give the existence theorem of conically degenerate points, also known as Dirac points, on the spectra of the fractional Schr\"odinger operator $\hsgm$ with a honeycomb potential acting on $L_{\gk}^2(\br^2/\Lambda)$.

\begin{theorem}[Dirac point]\label{thm:diracpoint}
	Let $\hsgm=(-\Delta)^{\frac{\sigma}{2}}+V(\by)$, where $V(\by)$ is a honeycomb potential followed by Definition \ref{hc:potential}. Assume 
	\begin{enumerate}
		\item\label{assum1} $\hsgm$ has a two-fold degenerate $L^2_{\gk}-$eigenvalue $E_D$, i.e., there exists $b_*\geqslant1$ such that $E_{b_*}(\gk)=E_{b_*+1}(\gk)=E_D$, and $E_b(\gk)\neq E_D$ when $b\neq b_*, b_*+1$.
		\item There exists a normalized eigenfunction $\Phi_1(\by)\in L^2_{\gk,\tau}$ corresponding to $E_D$.
		\item\label{assum3} The following non-degeneracy condition holds:
		\begin{equation}\label{velocity}
		v_F^{\sigma}=\frac12\overline{\bla\Phi_1(\by), \mathrm{i}\boldsymbol{p}^{\sigma}\Phi_2(\by)\bra}_{\Omega}\cdot
		\begin{pmatrix}
		1\\
		\mathrm{i}
		\end{pmatrix}>0,
		\end{equation}
		where $\Phi_2(\by)=\overline{\Phi_1(-\by)}\in L^2_{\gk,\bar{\tau}}$ and the operator $\boldsymbol{p}^{\sigma}$ is defined later in \eqref{psgm}.		
	\end{enumerate}
Then there exists a constant $q_0>0$ such that for any $|\bk-\gk|<q_0$, two distinct eigenvalue bands conically intersect at $(\gk, E_D)$ which is referred as to a Dirac point. Namely,
\begin{align}\label{q0thm}
	E_{b_*+1}(\bk)-E_D &= +v_F^{\sigma}|\bk-\gk|~\big(1+e_{b_*+1}(\bk-\gk)\big),\\
	E_{b_*}(\bk)-E_D &= -v_F^{\sigma}~|\bk-\gk|~\big(1+e_{b_*}(\bk-\gk)\big),
\end{align}
where $|e_{b_*,b_*+1}(\bk-\gk)|< C|\bk-\gk|$ as $|\bk-\gk|<q_0$.
\\

\end{theorem}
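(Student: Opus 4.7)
The strategy is a Lyapunov--Schmidt reduction adapted to the non-local fractional operator, along the lines of Fefferman--Weinstein's argument \cite{fefferman2012honeycomb} for the local Schr\"odinger case. First I would convert the quasi-periodic eigenvalue problem for $\hsgm$ on $L^2_{\bk}(\br^2/\Lambda)$ into a periodic one for $\hsgm(\bk)$ on $L^2(\br^2/\Lambda)$ via $\phi(\by)=e^{-\mathrm{i}\bk\cdot\by}\Phi(\by)$. Setting $\bk=\gk+\bkappa$ with $|\bkappa|$ small, the crucial observation is that the Fourier symbol $|\gk+\bkappa+\bm\vec\bk|^{\sigma}$ is smooth in $\bkappa$ \emph{uniformly} in $\bm\in\mathbb{Z}^2$, because $\gk+\bm\vec\bk$ is bounded away from $0$ for all $\bm\in\mathbb{Z}^2$ (the singularity of $|\cdot|^{\sigma}$ is avoided at $\bk=\gk$). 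A second order Taylor expansion then gives $\hsgm(\gk+\bkappa)=\hsgm(\gk)+\bkappa\cdot\boldsymbol{p}^{\sigma}+R_{\bkappa}$, where $\boldsymbol{p}^{\sigma}$ is the vector-valued Fourier multiplier appearing in \eqref{velocity} and $R_{\bkappa}=O(|\bkappa|^{2})$ as an operator from $H^{\sigma}$ to $L^{2}$.

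Next I would exploit the $\mathcal{R}$-decomposition \eqref{lk2decomp}. Combined with the inversion symmetry and reality of $V$, one checks that $\Phi_2=\overline{\Phi_1(-\cdot)}$ lies in $L^2_{\gk,\bar\tau}$, is an $E_D$-eigenfunction, and is $L^2_\Omega$-orthogonal to $\Phi_1$; together they span the two-dimensional $E_D$-eigenspace provided by assumption \ref{assum1}. I then decompose a candidate eigenfunction as $\phi=\alpha_1\phi_1+\alpha_2\phi_2+\phi^{\perp}$ with $\phi^{\perp}\perp\mathrm{span}\{\phi_1,\phi_2\}$ and set $E=E_D+\mu$. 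Projecting the equation $(\hsgm(\gk+\bkappa)-E_D-\mu)\phi=0$ onto $\mathrm{span}\{\phi_1,\phi_2\}^{\perp}$ and using that $E_D$ is isolated in the spectrum, the implicit function theorem produces a unique solution $\phi^{\perp}=\phi^{\perp}(\alpha_1,\alpha_2,\bkappa,\mu)=O\big((|\bkappa|+|\mu|)(|\alpha_1|+|\alpha_2|)\big)$. Substituting back yields a $2\times 2$ homogeneous system $M(\bkappa,\mu)(\alpha_1,\alpha_2)^{T}=0$ whose vanishing determinant characterizes the eigenvalues near $(\gk,E_D)$.

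To compute $M(\bkappa,\mu)$ to leading order, I use the $\mathcal{R}$-equivariance of $\boldsymbol{p}^{\sigma}$: since the vector symbol $\sigma|\bxi|^{\sigma-2}\bxi$ transforms covariantly under rotation by $R$, one obtains the intertwining identity $\mathcal{R}\boldsymbol{p}^{\sigma}=R^{-1}\boldsymbol{p}^{\sigma}\mathcal{R}$. Together with $\mathcal{R}\Phi_1=\tau\Phi_1$ and $\mathcal{R}\Phi_2=\bar\tau\Phi_2$, this forces $\bla\Phi_j,\boldsymbol{p}^{\sigma}\Phi_j\bra_\Omega=0$ (an $R$-invariant vector in $\br^{2}$ must vanish) and forces $\bla\Phi_1,\boldsymbol{p}^{\sigma}\Phi_2\bra_\Omega$ to be an eigenvector of $R$ with eigenvalue $\tau$, hence proportional to $(1,\mathrm{i})^{T}$. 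Reading off the entries and invoking self-adjointness, $M_{jj}=-\mu+O((|\bkappa|+|\mu|)^{2})$ and $M_{12}=\overline{M_{21}}=-\mathrm{i}v_F^{\sigma}(\kappa_1+\mathrm{i}\kappa_2)+O((|\bkappa|+|\mu|)^{2})$, so that
\begin{equation*}
\det M(\bkappa,\mu)=\mu^{2}-(v_F^{\sigma})^{2}|\bkappa|^{2}+O\big((|\bkappa|+|\mu|)^{3}\big).
\end{equation*}
Since $v_F^{\sigma}>0$ by assumption \ref{assum3}, blowing up at $\bkappa=\boldsymbol{0}$ via $\mu=v_F^{\sigma}|\bkappa|\widetilde\mu$ and applying the implicit function theorem (equivalently, Weierstrass preparation in $\mu$) produces two smooth branches $\mu_{\pm}(\bkappa)=\pm v_F^{\sigma}|\bkappa|(1+e_{\pm}(\bkappa))$ with $|e_{\pm}(\bkappa)|<C|\bkappa|$ on a neighbourhood $|\bkappa|<q_0$; these are the claimed bands $E_{b_*+1}$ and $E_{b_*}$.

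The main obstacle I anticipate is the functional-analytic treatment of the non-local expansion. In the local case $\sigma=2$ the $\bkappa$-expansion is polynomial and $\boldsymbol{p}^{\sigma}$ is a first-order differential operator, so all error estimates are immediate. Here one must instead verify that $\boldsymbol{p}^{\sigma}:H^{\sigma}_{\gk}(\br^{2}/\Lambda)\to L^{2}(\br^{2}/\Lambda)$ is bounded and that the remainder $R_{\bkappa}$ enjoys a $O(|\bkappa|^{2})$ estimate uniform in the Fourier mode $\bm$. Both rest on controlling the Hessian of $|\cdot|^{\sigma}$ on the discrete set $\{\gk+\bm\vec\bk\}_{\bm\in\mathbb{Z}^{2}}$, which is uniformly bounded away from the origin but whose bounds must be tracked through the Lyapunov--Schmidt iteration. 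The algebraic $\mathcal{R}$-character arguments, by contrast, are robust: they depend only on the $R$-equivariance of the symbol and on the characters of $\Phi_{1},\Phi_{2}$, not on the locality of the operator.
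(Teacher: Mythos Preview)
Your proposal is correct and follows essentially the same Lyapunov--Schmidt reduction as the paper: the same conversion to the periodic problem, the same Taylor expansion of the symbol $|\gk+\bm\vec\bk+\bkappa|^{\sigma}$ relying on $|\gk+\bm\vec\bk|\geqslant|\gk|>0$ for uniform-in-$\bm$ remainder bounds (the paper cites \cite{hong2017new} for this), the same $\mathcal{R}$-equivariance identity $\mathcal{R}\boldsymbol{p}^{\sigma}=R^{*}\boldsymbol{p}^{\sigma}\mathcal{R}$ to kill the diagonal entries and force the off-diagonal ones proportional to $(1,\mathrm{i})^{T}$, and the same determinant computation. The only cosmetic differences are that the paper inverts the perpendicular equation by an explicit Neumann series rather than invoking the implicit function theorem, and solves $\det\mathcal{M}=0$ by direct algebra rather than the blow-up/Weierstrass-preparation language you use.
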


\begin{remark}\label{remark:realvalue}
The above definition on $v_F^{\sigma}$ depends on the specific choice of $\Phi_1(\by)$. However, the value  $\left|\frac12\overline{\bla\Phi_\tau(\by), \mathrm{i}\boldsymbol{p}^{\sigma}\Phi_{\bar{\tau}}(\by)\bra}_{\Omega}\cdot\binom{1}{\mathrm{i}}\right|$ is fixed and independent of the choice of two normalized eigenfunctions $\Phi_\nu(\by)\in L^2_{\gk,\nu},~\nu=\tau,\bar{\tau}$. Indeed, if $v_F^{\sigma}$ is complex-valued, we could revise $\widetilde{\Phi}_1(\by)=e^{-\frac12\mathrm{i}\arg v_F^{\sigma}}\Phi_1(\by)$ and  $\widetilde{v_F^{\sigma}}=-\frac12\overline{\bla\widetilde{\Phi}_1(\by), \mathrm{i}\boldsymbol{p}^{\sigma}\widetilde{\Phi}_2(\by)\bra}_{\Omega}\cdot\binom{1}{\mathrm{i}}=|v_F^{\sigma}|>0$. Henceforth, we shall assume this choice of eigenfunctions by dropping superscript tildes. We refer the readers to Remark 2 in \cite{lee2019elliptic} for details.
\end{remark}

\begin{remark}
Consider the fractional Schr\"odinger operator $(-\Delta)^{\frac{\sigma}{2}}+\epsilon V_0(\by)$ with $V_0(\by)$ be a honeycomb potential. The assumptions \eqref{assum1}-\eqref{assum3} are satisfied for the sufficiently small $\epsilon$. The brief proof is given in Appendix.  However,  for a generic $\epsilon$, the assumptions \eqref{assum1}-\eqref{assum3} are satisfied almost all $\epsilon\in \mathbb{R}$ except a countable discrete set. We refer readers to \cite{fefferman2017topologically,fefferman2012honeycomb} for detailed proofs. 	
\end{remark}

\begin{proof}
It is evident that $\Phi_2(\by)=\overline{\Phi_1(-\by)}$ is also an eigenfunction of $\hsgm$ associated with $E_D$. Note that $\Phi_2(\by)\in L^2_{\gk,\bar{\tau}}$, so the two-degenerate eigenvalue $E_D$ has two linearly independent eigenfunctions $\Phi_1(\by)$, $\Phi_2(\by)$. Thus $E_D$ is NOT an $L^2_{\gk,1}-$eigenvalue of $\hsgm$. We only need to work on the subspace $L^2_{\gk,\tau}\oplus L^2_{\gk,\bar{\tau}}$.

Let $\Phi(\by;\bk)=e^{\text{i}\bk\cdot\by}\phi(\by;\bk)$ such that $\Phi(\by;\bk)\in L^2_{\bk}(\br^2/\Lambda)$ and $\phi(\by;\bk)\in L^2(\br^2/\Lambda)$. Supposing $|\bkappa|>0$ is small enough, we seek a non-trivial solution $\big(E(\gk+\bkappa),~\phi(\by;\gk+\bkappa)\big)$ to the periodic eigenvalue problem defined as \eqref{id:eigenprob:periodic} by Lyapunov-Schmidt reduction:
\begin{align}\label{expan1}
\nonumber	\hsgm(\gk+\bkappa)\phi(\by;\gk+\bkappa)&= \sum_{\bm\in\mathbb{Z}^2}|\gk+\bm\vec{\bk}+\bkappa|^{\sigma}\hat{\phi}(\bm\vec{\bk};\gk+\bkappa)e^{\text{i}\bm\vec{\bk}\cdot\by}+V(\by)\phi(\by;\gk+\bkappa) \\
&= E(\gk+\bkappa)\phi(\by;\gk+\bkappa),\quad (\by\in\br^2),
\end{align}
and when $\bkappa=\boldsymbol{0}$, $\hsgm(\gk)\phi_j(\by;\gk)=E_D\phi_j(\by;\gk),~j=1,~2$. Let $E(\gk+\bkappa)=E_D+E^{(1)}(\bkappa)$ and $E^{(1)}(\bkappa)\sim \mathcal{O}(|\bkappa|)$ since $E(\gk+\bkappa)$ is Lipschitz continuous. We decompose the eigenfunction into $\phi(\by; \gk+\bkappa)=\phi^{(0)}(\by)+\phi^{(1)}(\by)$ such that $\phi^{(0)}\in\ker(\hsgm(\gk)-E_D \text{I})$, $\phi^{(1)}\perp\ker(\hsgm(\gk)-E_D \text{I})$, i.e.,
\begin{equation}\label{phi:orthogonal}
\phi^{(0)}(\by)=a_1\phi_1(\by)+a_2\phi_2(\by)\quad\text{and}\quad \phi^{(1)}(\by)\perp\phi_j(\by),\quad j=1,~2.
\end{equation}
Here two parameters $a_1$ and $a_2$ are to be determined.

Next we shall expand the $\sigma-$power coefficients in \eqref{expan1} as $|\bkappa|>0$ sufficiently small,
\begin{equation}\label{expan:k}
|\gk+\bm\vec{\bk}+\bkappa|^{\sigma} = |\gk+\bm\vec{\bk}|^{\sigma}-\text{i} \bkappa\cdot \text{i}\sigma\big|\gk+\bm\vec{\bk}\big|^{\sigma-2}(\gk+\bm\vec{\bk})+c_{\bm,\bkappa}^{\sigma}|\bkappa|^2.
\end{equation}
One can observe the lower bound that $\forall \bm\in\bz^2$, $|\gk+\bm\vec{\bk}|\geqslant |\gk|>0$ holds, the second order Taylor expansion coefficients $c_{\bm,\bkappa}^{\sigma}$ will be uniformly bounded for all $\bm\in\mathbb{Z}^2$, see Lemma A.1 in \cite{hong2017new} for details.

\begin{remark}
	Actually, $c_{\bm,\bkappa}^{\sigma}\rightarrow 0$ as $\bm\rightarrow\infty$ here. Furthermore, one can verify the third part is in an order of $\mathcal{O}(|\bkappa|^{\sigma})$ for all $\bkappa\in\br^2$ \cite{hong2017new}.
\end{remark}

Given $\phi\in H^{\sigma}(\br^2/\Lambda)$, we define two operators $\boldsymbol{p}^{\sigma}$ and $q_{\bkappa}^{\sigma}$ on $H^{\sigma}(\br^2/\Lambda)$ by summing products of Fourier coefficients $\hat{\phi}(\bm\vec{\bk})$ together with the second and third terms in \eqref{expan:k} respectively as below
\begin{eqnarray}
	&\boldsymbol{p}^{\sigma}\phi(\by) = \text{i}\sigma\sum\limits_{\bm\in\mathbb{Z}^2}(\gk+\bm\vec{\bk})\big|\gk+\bm\vec{\bk}\big|^{\sigma-2}\hat{\phi}(\bm\vec{\bk})e^{\text{i}\bm\vec{\bk}\cdot\by}=e^{-\mathrm{i}\gk\cdot\by}\boldsymbol{p}^{\sigma}\Phi(\by);& \label{psgm} \\
	&\text{and}\quad\quad q_{\bkappa}^{\sigma}\phi(\by) = \sum\limits_{\bm\in\bz^2}c_{\bm,\bkappa}^{\sigma}\hat{\phi}(\bm\vec{\bk})e^{\text{i}\bm\vec{\bk}\cdot\by}.& \label{expan:qk}
\end{eqnarray}
Recall from \eqref{expan1}-\eqref{expan:qk}, it will deduce that
\begin{eqnarray}
\big( \mathcal{H}^{\sigma}(\gk)-E_D \big)\phi^{(1)}(\by)= \big(\text{i}\bkappa\cdot\boldsymbol{p}^{\sigma}-|\bkappa|^{2} q_{\bkappa}^{\sigma}+E^{(1)}(\bkappa)\big)\big(\phi^{(0)}(\by)+\phi^{(1)}(\by)\big).
\end{eqnarray}
We define the orthogonal projection operator $P_{\parallel}$ on $span\{\phi_1(\by), \phi_2(\by)\}$, $P_{\perp}=I-P_{\parallel}$. Then, we could get the following orthogonal decompositions:
\begin{align}\label{perp2}
\nonumber	P_{\perp}:\quad \big( \mathcal{H}^{\sigma}(\gk)-E_D \big)\phi^{(1)} =&~ P_{\perp}\big(\text{i}\bkappa\cdot\boldsymbol{p}^{\sigma}-|\bkappa|^{2} q_{\bkappa}^{\sigma}+E^{(1)}(\bkappa)\big)\phi^{(1)}(\by) \\
& +P_{\perp}\big(\text{i}\bkappa\cdot\boldsymbol{p}^{\sigma}-|\bkappa|^{2} q_{\bkappa}^{\sigma}+E^{(1)}(\bkappa)\big)\phi^{(0)}(\by)
\end{align}
and
\begin{align}\label{parallel}
\nonumber	P_{\parallel}:\quad 0 ~=&~ P_{\parallel}\big(\text{i}\bkappa\cdot\boldsymbol{p}^{\sigma}-|\bkappa|^{2}q_{\bkappa}^{\sigma}+E^{(1)}(\bkappa)\big)\phi^{(1)}(\by) \\
& +P_{\parallel}\big(\text{i}\bkappa\cdot\boldsymbol{p}^{\sigma}-|\bkappa|^{2}q_{\bkappa}^{\sigma}+E^{(1)}(\bkappa)\big)\phi^{(0)}(\by).
\end{align}

It is easy to check that $R_{\gk}^{\sigma}:=\big( \mathcal{H}^{\sigma}(\gk)-E_D \mathrm{I} \big)^{-1}$ is a bounded operator from $P_{\perp}L^2(\br^2/\Lambda)$ to $P_{\perp}H^{\sigma}(\br^2/\Lambda)$ and thus the operator $\Xi\big(\bkappa,E^{(1)}(\bkappa)\big)=R_{\gk}^{\sigma}P_{\perp}\big(\text{i}\bkappa\cdot\boldsymbol{p}^{\sigma}-|\bkappa|^{2}q^{\sigma}+E^{(1)}(\bkappa)\big)$ is bounded on $P_{\perp}H^{\sigma}(\br^2/\Lambda)$. Note that $\bkappa\rightarrow \boldsymbol{0}$ and therefore $E^{(1)}(\bkappa)\rightarrow 0$, $\phi^{(1)}(\by)$ can be resolved as
\begin{align}\label{phi1:conical}
\nonumber	\phi^{(1)}(\by)&=\big[\mathrm{I}-\Xi\big(\bkappa,E^{(1)}(\bkappa)\big)\big]^{-1}\Xi\big(\bkappa,E^{(1)}(\bk)\big)\phi^{(0)}(\by) \\
&= a_1\hat{C}\big(\bkappa, E^{(1)}(\bkappa)\big)\phi_1(\by)+a_2\hat{C}\big(\bkappa, E^{(1)}(\bkappa)\big)\phi_2(\by).
\end{align}
Here $\hat{C}\big(\bkappa, E^{(1)}(\bkappa)\big)=\big[\mathrm{I}-\Xi\big(\bkappa,E^{(1)}(\bkappa)\big)\big]^{-1}\Xi\big(\bkappa,E^{(1)}(\bkappa)\big): P_{\perp}H^{\sigma}(\br^2/\Lambda)\rightarrow P_{\perp}H^{\sigma}(\br^2/\Lambda)$ and
\begin{equation}
\|\hat{C}\big(\bkappa, E^{(1)}(\bkappa)\big)\phi_j(\by)\|_{H^{\sigma}(\br^2/\Lambda)}\leqslant C\big(|\bkappa|+|E^{(1)}(\bkappa)|\big),\quad j=1,~2.
\end{equation}

Substituting \eqref{phi1:conical} into \eqref{parallel}, we have a homogeneous system
\begin{equation}\label{eq:a1a2}
\mathcal{M}\big(\bkappa,E^{(1)}(\bkappa)\big)\binom{a_1}{a_2}=0,
\end{equation}
where $\mathcal{M}\big(\bkappa,E^{(1)}(\bkappa)\big)$ is a $2\times 2$
matrix with each component as follows:
\begin{align}\label{matrix}
\nonumber	\mathcal{M}_{ij}=&~\bla\phi_i(\by),\big(\text{i}\bkappa\cdot\boldsymbol{p}^{\sigma}-|\bkappa|^{2} q_{\bkappa}^{\sigma}+E^{(1)}(\bkappa)\big)\big(\hat{C}\big(\bkappa, E^{(1)}(\bkappa)\big)+1\big)\phi_j(\by)\bra_{\Omega} \\
\nonumber	=&~E^{(1)}(\bkappa)\dlt_{ij}+\bla\phi_i(\by),\big(\text{i}\bkappa\cdot \boldsymbol{p}^{\sigma}-|\bkappa|^{2}q_{\bkappa}^{\sigma}\big)\phi_j(\by)\bra_{\Omega} \\
&~+\bla\phi_i(\by),\big(\text{i}\bkappa\cdot \boldsymbol{p}^{\sigma}-|\bkappa|^{2}q_{\bkappa}^{\sigma}\big)\hat{C}\big(\bkappa, E^{(1)}(\bkappa)\big)\phi_j(\by)\bra_{\Omega}.
\end{align}
To obtain a nontrivial solution, $\mathcal{M}\big(\bkappa,E^{(1)}(\bkappa)\big)$ should be irreversible, i.e., $\det\mathcal{M}\big(\bkappa,E^{(1)}(\bkappa)\big)=0$.

Associated with Fourier coefficients of $\mathcal{R}\Phi_j$ in \eqref{fourier:coeffrot}, one can also observe that $\mathcal{R}\boldsymbol{p}^{\sigma}\Phi_j,~j=1,~2$ behave as
\begin{align}
\nonumber	\mathcal{R}\boldsymbol{p}^{\sigma}\Phi_j(\by) &= \text{i}\sigma\sum_{\bm\in\bz^2}(\gk+\bm\vec{\bk})\big|\gk+\bm\vec{\bk}\big|^{\sigma-2}\hat{\Phi}_j(\gk+\bm\vec{\bk})e^{\text{i}(\gk+\bm\vec{\bk})\cdot R^*\by} \\
\nonumber	&= \text{i}\sigma\sum_{\bm\in\bz^2}(\gk+\bm\vec{\bk})\big|\gk+\bm\vec{\bk}\big|^{\sigma-2}\hat{\Phi}_j(\gk+\bm\vec{\bk})e^{\text{i}R(\gk+\bm\vec{\bk})\cdot \by} \\
\nonumber	&= \text{i}\sigma\sum_{\bm\in\bz^2}R^*(\gk+\bm\vec{\bk})\big|R^*(\gk+\bm\vec{\bk})\big|^{\sigma-2}\hat{\Phi}_j(R^*(\gk+\bm\vec{\bk}))e^{\text{i}(\gk+\bm\vec{\bk})\cdot \by} \\
&= R^*\boldsymbol{p}^{\sigma}\mathcal{R}\Phi_j(\by).
\end{align}
Note that $\bla \phi_i(\by), \text{i}\boldsymbol{p}^{\sigma}\phi_j(\by)\bra_{\Omega}=\bla \Phi_i(\by), \text{i}\boldsymbol{p}^{\sigma}\Phi_j(\by)\bra_{\Omega},~i,~j=1,~2$. By the fact that $\Phi_1\in L^2_{\gk,\tau}$ and $\Phi_2\in L^2_{\gk,\bar{\tau}}$, we have
\begin{align}
\nonumber	\bla \Phi_i(\by), \text{i}\boldsymbol{p}^{\sigma}\Phi_j(\by)\bra_{\Omega} =&~ \bla \mathcal{R}\Phi_i(\by), \text{i}\mathcal{R}\boldsymbol{p}^{\sigma}\Phi_j(\by)\bra_{\Omega} \\
\nonumber	=&~ \bla \mathcal{R}\Phi_i(\by), \text{i}R^*\boldsymbol{p}^{\sigma}\mathcal{R}\Phi_j(\by)\bra_{\Omega} \\
=&~ R^*\bar{\tau}_i\tau_j\bla \Phi_i(\by), \text{i}\boldsymbol{p}^{\sigma}\Phi_j(\by)\bra_{\Omega}.
\end{align}
Here $\tau_1=\tau=e^{\text{i}2\pi/3},~\tau_2=\bar{\tau}$.

If $i=j$, then $\bar{\tau}_i\tau_j=|\tau_j|^2=1$. Therefore,
\begin{equation}
R\bla \Phi_j(\by), \text{i}\boldsymbol{p}^{\sigma}\Phi_j(\by)\bra_{\Omega}=\bla \Phi_j(\by), \text{i}\boldsymbol{p}^{\sigma}\Phi_j(\by)\bra_{\Omega},\quad j=1~\text{or}~2.
\end{equation}
However, $1$ is not an eigenvalue of matrix $R$, and then it indicates $\bla \Phi_j(\by), \text{i}\boldsymbol{p}^{\sigma}\Phi_j(\by)\bra_{\Omega}=0$, $j=1,~2$.

If $i=1,~j=2$, it gives $\bar{\tau}_1\tau_2=\tau$. Thus,
\begin{equation}
R\bla \Phi_1(\by), \text{i}\boldsymbol{p}^{\sigma}\Phi_2(\by)\bra_{\Omega} = \tau\bla \Phi_1(\by), \text{i}\boldsymbol{p}^{\sigma}\Phi_2(\by)\bra_{\Omega},
\end{equation}
and $(1,~\text{i})^T$ is an eigenvector with respect to $\tau$. Then, $v_F^{\sigma}$ satisfies
\begin{equation}
	\bla \Phi_1(\by), \text{i}\boldsymbol{p}^{\sigma}\Phi_2(\by)\bra_{\Omega}= -v_F^{\sigma}
\begin{pmatrix}
	1\\ \text{i}
\end{pmatrix}.
\end{equation}
In view of Remark \ref{remark:realvalue}, we choose the appropriate $\Phi_1(\bx),~\Phi_2(\bx)$ to deduce
\begin{equation}\label{id:phi1pphi2}
	v_F^{\sigma}=-\frac12\overline{\bla \Phi_1(\by), \text{i}\boldsymbol{p}^{\sigma}\Phi_2(\by)\bra_{\Omega}}\cdot
\begin{pmatrix}
	1\\ \text{i}
\end{pmatrix}>0,\quad
\bla \Phi_2(\by), \text{i}\boldsymbol{p}^{\sigma}\Phi_1(\by)\bra_{\Omega}=-v_F^{\sigma}
\begin{pmatrix}
	1\\ -\text{i}
\end{pmatrix}.
\end{equation}

Suppose that $v_F^{\sigma}$ is nonzero.  A direct calculation of $\det \mathcal{M}=0$ yields that
\begin{equation}
\big(E^{(1)}(\bkappa)\big)^2=|v_F^{\sigma}|^2|\bkappa|^2+\mathcal{O}\Big[|\bkappa|^{3}+|\bkappa|^{2}E^{(1)}(\bkappa)+|\bkappa|\big(E^{(1)}(\bkappa)\big)^2\Big].
\end{equation}
Thus,
\begin{equation}
E^{(1)}(\bkappa)=\pm v_F^{\sigma} |\bkappa|\big(1+e(\bkappa)\big),\quad |e(\bkappa)|\lesssim |\bkappa|.
\end{equation}
As $\bkappa\rightarrow\boldsymbol{0}$, It follows that two branches evolve conically in the neighborhood of $(\gk, E_D)$,
\begin{equation}
	E_{b_*}(\gk+\bkappa)=E_D- v_F^{\sigma}|\bkappa|+\mathcal{O}(|\bkappa|^{2}),\quad E_{b_*+1}(\gk+\bkappa)=E_D+ v_F^{\sigma}|\bkappa|+\mathcal{O}(|\bkappa|^{2}).
\end{equation}
\end{proof}

For notation convenience, we denote $b_*=-,~b_*+1=+$ in latter discussion. Furthermore, we also give a nontrivial normalized solution $(a_1,a_2)^T$ to \eqref{eq:a1a2}. Indeed, there exists $q_0>0$ such that for $0<|\bk-\gk|< q_0$, 
\begin{equation}
\begin{pmatrix}
	{a_1}_{\pm}\\
	{a_2}_{\pm}
\end{pmatrix}
=
\begin{pmatrix}
	\frac{\sqrt{2}}{2}\frac{\kappa_1+\mathrm{i}\kappa_2}{|\bkappa|}+\mathcal{O}(|\bkappa|)\\
	\pm\frac{\sqrt{2}}{2}+\mathcal{O}(|\bkappa|)
\end{pmatrix}
\end{equation}
Therefore two eigenfunctions corresponding to lower and upper bands are of the form
\begin{equation}\label{eigenfunc:expan}
	 \Phi_{\pm}(\by;\gk+\bkappa)=\frac{\sqrt{2}}{2}e^{\text{i}\bkappa\cdot\by}\Big[\frac{\kappa_1+\mathrm{i}\kappa_2}{|\bkappa|}\Phi_1(\by)\pm\Phi_2(\by)\Big]+\mathcal{O}_{H_{\gk}^{\sigma}(\br^2/\Lambda)}(|\bkappa|).
\end{equation}

According to above arguments, we carried out a clear spectra description varying around $\gk$ associated with the honeycomb fractional Schr\"odinger operator. The analogue spectra studies for standard elliptic operators are also studied in \cite{fefferman2012honeycomb,keller2018spectral,lee2019elliptic}.

Since $\{E_b(\bk)\}_{b\geqslant1,\bk\in\Omega^*}$ is an ordered sequence and Lipschitz continuous in $\Omega^*$, we can deduce the following corollary:

\begin{corollary}\label{corollary:br}
		
Let $(\gk,E_D)$ be a Dirac point given in Theorem \ref{thm:diracpoint}. There exists $q_1>0$ small, $b_r\geqslant +$, such that $\exists~ C>0$ infers
\begin{equation}
	\left\{
	\begin{aligned}
		|E_b(\bk)-E_D|>C,& \quad b\in\{1,\cdots,b_r\}\setminus\{\pm\},~|\bk-\gk|\leqslant q_1, \\
		E_b(\bk)-E_D>C,& \quad \forall~b>b_r,~\bk\in\Omega^*
	\end{aligned}
	\right.
\end{equation}

\end{corollary}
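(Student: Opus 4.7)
The plan is to separate the corollary into two independent assertions and handle them by distinct mechanisms: a compactness/continuity argument for the low-index bands in a small neighborhood of $\gk$, and a Weyl-type growth estimate for the high-index bands uniformly over the full Brillouin zone $\Omega^*$.

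First I would establish the uniform lower bound on $E_b(\bk)$ for large $b$, which is the global statement. By the min-max characterization of ordered eigenvalues, for any $\bk \in \Omega^*$,
\[
E_b(\bk) \;\geqslant\; E_b^{\mathrm{free}}(\bk) - \|V\|_{L^\infty(\br^2)},
\]
where $E_b^{\mathrm{free}}(\bk)$ is the $b$-th eigenvalue of $(-\Delta)^{\sigma/2}$ acting on $L^2_{\bk}(\br^2/\Lambda)$, i.e., the $b$-th smallest member of the multiset $\{|\bk+\bm\vec{\bk}|^{\sigma} : \bm\in\bz^2\}$. Counting lattice points in $\Omega^*$ yields $E_b^{\mathrm{free}}(\bk) \gtrsim b^{\sigma/2}$ uniformly for $\bk\in\Omega^*$ (the additive shift by $\bk$ contributes only a bounded correction since $\Omega^*$ is bounded). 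Consequently there exists $b_r \geqslant +$ such that $E_b(\bk) > E_D + 1$ for every $b > b_r$ and every $\bk\in\Omega^*$; this delivers the second bound with, say, $C = 1$.

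Next I would handle the low-index bands $b \in \{1,\dots,b_r\}\setminus\{\pm\}$. By assumption \eqref{assum1} of Theorem \ref{thm:diracpoint}, $E_b(\gk) \neq E_D$ for each such $b$, so
\[
\delta := \min_{b \in \{1,\dots,b_r\}\setminus\{\pm\}} |E_b(\gk) - E_D| > 0.
\]
The Lipschitz continuity of each dispersion band $E_b(\bk)$ in $\bk$ (item (2) in the Floquet-Bloch list in Section \ref{sec:preli}) produces a common Lipschitz constant $L$ for this finite collection. Taking $q_1 > 0$ so small that $q_1 \leqslant q_0$ (from Theorem \ref{thm:diracpoint}) and $L q_1 < \delta/2$, we obtain
\[
|E_b(\bk) - E_D| \geqslant |E_b(\gk) - E_D| - L|\bk-\gk| \geqslant \delta/2
\]
for every $b \in \{1,\dots,b_r\}\setminus\{\pm\}$ and every $|\bk-\gk|\leqslant q_1$. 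Setting $C := \min(1,\delta/2)$ combines both parts.

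The mildly nontrivial step is the uniform Weyl-type bound on $E_b^{\mathrm{free}}(\bk)$, because the shift by $\bk$ means that different $\bk$ produce different eigenvalue sequences. The resolution is that counting indices $\bm$ with $|\bk+\bm\vec{\bk}|\leqslant R$ gives $\pi R^2/|\Omega^*|$ up to a boundary error of order $R$, and this asymptotic is uniform in $\bk\in\Omega^*$ because $\Omega^*$ is compact; inverting produces $E_b^{\mathrm{free}}(\bk) \geqslant c\, b^{\sigma/2}$ with $c>0$ independent of $\bk$. Everything else is a direct consequence of the finite-dimensional continuity argument, so no further obstacle arises.
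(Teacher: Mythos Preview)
Your proof is correct and fills in the details that the paper leaves implicit. The paper does not supply a proof of this corollary beyond the one-line remark preceding it (``Since $\{E_b(\bk)\}_{b\geqslant1,\bk\in\Omega^*}$ is an ordered sequence and Lipschitz continuous in $\Omega^*$, we can deduce the following corollary''), so your argument is in fact more thorough than the paper's own treatment. In particular, you correctly identify that the second assertion requires a growth bound on $E_b(\bk)$ uniform in $\bk\in\Omega^*$, which does not follow from Lipschitz continuity of each individual band alone; your min-max comparison against the free operator and the uniform lattice-point count supply exactly this ingredient. The first assertion is handled by the same continuity mechanism the paper alludes to, applied to the finite family $\{1,\dots,b_r\}\setminus\{\pm\}$.
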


The following inner products of eigenfunctions will be used later.

\begin{proposition}\label{prop:phicubic}
Given $\Phi_1\in L^2_{\gk,\tau}(\br^2/\Lambda)$ and $\Phi_2\in L^2_{\gk,\bar{\tau}}(\br^2/\Lambda)$, the two normalized orthogonal eigenfunctions of $E_D$ as given in Theorem \ref{thm:diracpoint}, and $W\in C^{\infty}_{\gk}(\br^2/\Lambda)$ is real-valued and odd. Then
\begin{align}
	\bla\Phi_i(\by), W(\by)\Phi_j(\by)\bra_\Omega &= \vartheta(\dlt_{i1}-\dlt_{j2}),\quad i,~j=1,~2. \label{id:vartheta} \\
	\bla\Phi_i(\by), \overline{\Phi_j(\by)}\Phi_k(\by)\Phi_l(\by)\bra_\Omega &= \frac12 \big(b_1 \dlt_{ij}+b_2(1-\dlt_{ij})\big)(\dlt_{ik}\dlt_{jl}+\dlt_{il}\dlt_{jk}); \label{id:b1b2}
\end{align}
Here $b_1,~b_2$ and $\vartheta$ are real constants.
\end{proposition}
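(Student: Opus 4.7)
The plan is to exploit two symmetries of the Dirac eigenspace: the rotational covariance $\mathcal{R}\Phi_1 = \tau\Phi_1$, $\mathcal{R}\Phi_2 = \bar{\tau}\Phi_2$, and the inversion identity $\Phi_2(\by) = \overline{\Phi_1(-\by)}$ recorded in the proof of Theorem \ref{thm:diracpoint}. A preliminary observation is that each integrand in the statement has all of its quasi-periodic $\gk$-phases cancel (one conjugated and one unconjugated factor in \eqref{id:vartheta}, reading $W$ as $\Lambda$-periodic and $\mathcal{R}$-invariant in its honeycomb context; two of each kind in \eqref{id:b1b2}), so each integrand is genuinely $\Lambda$-periodic and the change of variables $\by \mapsto R^*\by$ leaves the fundamental-cell integral invariant.

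For the off-diagonal cases of \eqref{id:vartheta}, I apply the rotational change of variables to $\langle\Phi_1, W\Phi_2\rangle_\Omega$. Since $W$ is $\mathcal{R}$-invariant, the substitution $\by\mapsto R^*\by$ pulls out a factor $\bar{\tau}\cdot\bar{\tau}=\tau$, forcing $(1-\tau)\langle\Phi_1, W\Phi_2\rangle_\Omega = 0$ and hence vanishing; the symmetric identity $\langle\Phi_2, W\Phi_1\rangle_\Omega = 0$ follows identically. For the diagonal, oddness of $W$ combined with the inversion identity gives, upon substituting $\by\mapsto -\by$ (which is legitimate because $|\Phi_j|^2 W$ is periodic),
\[
\langle\Phi_1, W\Phi_1\rangle_\Omega \;=\; -\langle\Phi_2, W\Phi_2\rangle_\Omega,
\]
and I define $\vartheta$ as this common signed value. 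Its reality is immediate from self-adjointness of multiplication by the real-valued $W$.

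For \eqref{id:b1b2}, the same rotation-of-variables argument applied to $\int_\Omega \overline{\Phi_i}\,\overline{\Phi_j}\,\Phi_k\,\Phi_l\,d\by$ produces an overall prefactor $\bar{\tau}_i\bar{\tau}_j\tau_k\tau_l$ with $\tau_1=\tau$, $\tau_2=\bar{\tau}$. Among the sixteen choices of $(i,j,k,l)\in\{1,2\}^4$, this factor equals $1$ precisely when $\{i,j\}=\{k,l\}$ as multisets---exactly the pattern picked out by $\delta_{ik}\delta_{jl}+\delta_{il}\delta_{jk}$; all remaining combinations force the integral to vanish. The surviving all-equal cases reduce to $\int_\Omega |\Phi_1|^4$ and $\int_\Omega |\Phi_2|^4$, which coincide via the substitution $\by\mapsto -\by$ and the inversion identity, and I set $b_1$ equal to this common value; the mixed surviving cases all reduce to $\int_\Omega |\Phi_1|^2|\Phi_2|^2$, which I set equal to $b_2/2$. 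A brief case check on the six surviving index patterns then reconciles these values with the closed form $\tfrac12\bigl(b_1\delta_{ij}+b_2(1-\delta_{ij})\bigr)\bigl(\delta_{ik}\delta_{jl}+\delta_{il}\delta_{jk}\bigr)$.

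The main obstacle is essentially bookkeeping: verifying that the rotated integrand genuinely integrates to the same value over $\Omega$ (which is where the $\gk$-phase cancellations must be tracked carefully) and enumerating the sixteen quartic index patterns in \eqref{id:b1b2} so as to reconcile the $\mathcal{R}$-selection rule with the compact closed-form expression on the right-hand side. Reality of $\vartheta$, $b_1$, and $b_2$ is automatic from reality and self-adjointness of the multiplication operators involved.
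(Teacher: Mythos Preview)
Your proposal is correct and follows exactly the approach the paper indicates: the paper does not give a proof but refers to \cite{arbunich2018rigorous,hu2019linear} and says the argument proceeds ``by considering rotational symmetry,'' which is precisely the $\mathcal{R}$-selection rule plus the inversion relation $\Phi_2(\by)=\overline{\Phi_1(-\by)}$ that you implement. Your caveat that $W$ must be read as $\Lambda$-periodic and $\mathcal{R}$-invariant (rather than literally $C^{\infty}_{\gk}$) is also the correct interpretation, consistent with the cited references and with the example \eqref{nummodu}.
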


The proof is quite analogous to that shown in \cite{arbunich2018rigorous, hu2019linear} by considering rotational symmetry and will not be reproduced here. The conclusions in above together with \eqref{id:phi1pphi2} will be vital facts in deriving Dirac equations in the forthcoming effective dynamics. To keep things simple, we denote that
\begin{equation}\label{id:muijkl}
	\mu_{ijkl}=\mu\bla\Phi_i(\by), \overline{\Phi_j(\by)}\Phi_k(\by)\Phi_l(\by)\bra_\Omega
\end{equation}

If $\vartheta\neq0$, the fractional Schr\"odinger operator with a small modulation is changed into $(-\Delta)^{\frac{\sigma}{2}}+V(\by)+\veps W(\by)$. Followed by \cite{lee2019elliptic}, Dirac points will vanish and a local gap appears between two dispersion bands. This is related to navel topological phenomena in quantum mechanics and material innovations \cite{bal2017topological,bal2019continuous,fefferman2017topologically}.

\begin{figure}[h]
	\centering
	\includegraphics[height=6cm]{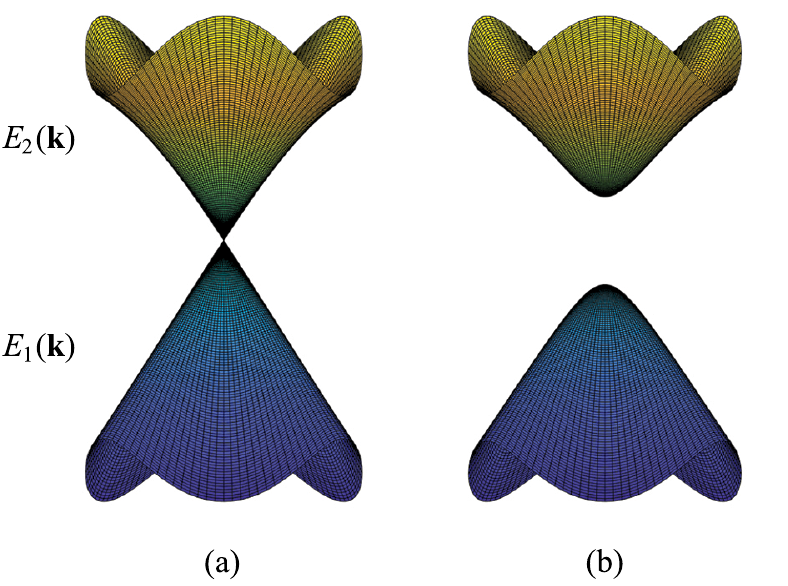}
	\caption{The two lowest dispersion band functions near $\gk$ for the fractional Schr\"odinger operator (a) $(-\Delta)^{\frac{\sigma}2}+V(\by)$ with $V(\by)$ giving in \eqref{numpoten}; (b) $(-\Delta)^{\frac{\sigma}2}+V(\by)+\veps W(\by)$ with a small perturbation $W(\by)$ giving in \eqref{nummodu}. Here $\veps=0.1$. Two dispersion bands touch each other at $\gk$ and form a local cone structure in (a). The cone disappears under perturbation in (b).}
	\label{fig1}
\end{figure}

As an example, we choose the following honeycomb potential 
\begin{equation}\label{numpoten}
	V(\by)=2\big[\cos(\bk_1\cdot\by)+\cos(\bk_2\cdot\by)+\cos((\bk_1+\bk_2)\cdot\by)\big].
\end{equation}
We numerically compute the quasi-periodic eigenvalue problem \eqref{id:eigenprob:pseodu} of  $\hsgm=(-\Delta)^{\frac{\sigma}{2}}+V(\by)$ with a Fourier collocation method \cite{guo2019bloch}. The first two bands $E_1(\bk)$ and $E_2(\bk)$ near $\gk$ are displayed in Figure \ref{fig1}(a). It shows that the two bands form a perfect cone in the small neighborhood of $\gk$ though the cone deforms due to high order effects in the far region.

We also numerically investigate the stability of Dirac point under an inversion-symmetry breaking perturbation. Let the perturbing potential $W(\by)$ be
\begin{equation}\label{nummodu}
	W(\by)=\big[\sin(\bk_1\cdot\by)+\sin(\bk_2\cdot\by)+\sin((-\bk_1-\bk_2)\cdot\by)\big].
\end{equation}
In Figure \ref{fig1}(b), we numerically compute the dispersion relations for the perturbed operator $(-\Delta)^{\frac{\sigma}2}+V(\by)+\veps W(\by)$ with $\veps=0.1$. It is evident that the two dispersion bands no longer intersect with each other and a local gap opens. This can be obtained by a similar perturbation argument as that in Proposition \ref{prop:phicubic} with the calculations \eqref{id:vartheta}. Since this is not the focus of this work, we omit the discussion. We remark that the papers \cite{berkolaiko2018symmetry,fefferman2017topologically,fefferman2012honeycomb,lee2019elliptic} contain detailed proofs for other operators.

\begin{figure}[h]
	\centering
	\includegraphics[height=5.3cm]{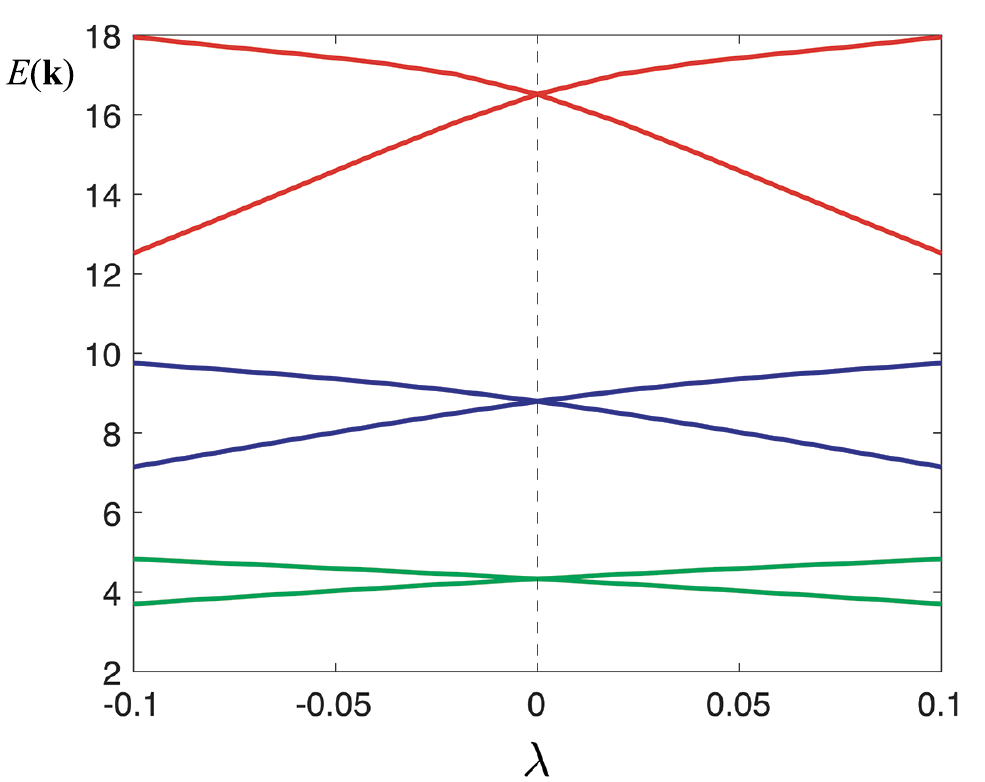}
	\caption{The dispersion curves $E_{1,2}(\gk+\lambda\bk_2)$ as $\lambda$ varies in $[-0.1,0.1]$ for different $\sigma$.   From the bottom up, the fractional exponents $\frac{\sigma}2=0.6$ (green), $0.8$ (blue), $1$ (red).}
	\label{fig2}
\end{figure}

We also examine the difference caused by the fractional exponent $\sigma$ numerically. The comparison is given in Figure \ref{fig2}. In the simulation, the honeycomb potential is given in \eqref{numpoten}. For the simplicity, we only display the dispersion curves along the $\bk_2$ direction. Figure \ref{fig2} shows the dispersion relation $E_{1,2}(\gk+\lambda\bk_2)$ as $\lambda\in[-0.1,0.1]$ with the fractional exponents $\frac{\sigma}2=0.6,~0.8,~1$ from the bottom to the top. We can see conical intersections in all three cases. However, the slope becomes steeper and steeper as $\sigma$ increases, and further the Dirac energy $E_D$ also increases. This can be explained for the shallow honeycomb potential discussed in Appendix \ref{appendix:intersec}, see Proposition \ref{thm:intersec}. It is an interesting problem for the generic case, but it is beyond the scope of the current work.

Before finishing this section, we emphasize that $\bp\Phi_{j}(\by),~j=1,~2$ are smooth by the following Corollary:
\begin{corollary}\label{coro:smooth}	
	If $\Psi\in C_{\gk}^{\infty}(\br^2/\Lambda)$, then
	$\bp\Psi\in C_{\gk}^{\infty}(\br^2/\Lambda)$.
\end{corollary}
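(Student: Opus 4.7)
The plan is to use the Fourier characterization of $C^{\infty}_{\gk}(\br^2/\Lambda)$: a quasi-periodic function $\Psi$ is smooth if and only if, writing $\Psi(\by)=e^{\mathrm{i}\gk\cdot\by}\psi(\by)$ with $\psi\in C^{\infty}(\br^2/\Lambda)$, the Fourier coefficients $\hat{\psi}(\bm\vec{\bk})=\hat{\Psi}(\gk+\bm\vec{\bk})$ decay faster than any polynomial in $|\bm|$. By the definition \eqref{psgm}, $\bp$ is simply a Fourier multiplier that sends $\hat{\psi}(\bm\vec{\bk})$ to $\mathrm{i}\sigma(\gk+\bm\vec{\bk})|\gk+\bm\vec{\bk}|^{\sigma-2}\hat{\psi}(\bm\vec{\bk})$, and the output is already presented in the form $e^{\mathrm{i}\gk\cdot\by}$ times a $\Lambda$-periodic series, so quasi-periodicity is automatic.

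First I would observe that because $\gk\notin\Lambda^*$ and in fact $|\gk|>0$, the symbol
\[
s(\bm):=(\gk+\bm\vec{\bk})\bigl|\gk+\bm\vec{\bk}\bigr|^{\sigma-2}
\]
has no singularity at any lattice point, and satisfies $|s(\bm)|\leqslant C(1+|\bm|)^{\sigma-1}$ uniformly in $\bm\in\mathbb{Z}^2$. Then for any multi-index $\balpha\in\mathbb{N}^2$ and any $N\geqslant 0$, termwise differentiation yields
\[
\ptl_{\by}^{\balpha}\bigl(\bp\Psi\bigr)(\by)
=\mathrm{i}\sigma\sum_{\bm\in\mathbb{Z}^2}(\mathrm{i}(\gk+\bm\vec{\bk}))^{\balpha}\,s(\bm)\,\hat{\psi}(\bm\vec{\bk})\,e^{\mathrm{i}(\gk+\bm\vec{\bk})\cdot\by},
\]
and I would dominate the summand by $C(1+|\bm|)^{|\balpha|+\sigma-1}|\hat{\psi}(\bm\vec{\bk})|$. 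The rapid decay of $\hat{\psi}(\bm\vec{\bk})$ guarantees absolute and uniform convergence of every such series, justifying the differentiation and proving $\bp\Psi\in C^{\infty}(\br^2)$.

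Finally I would verify the quasi-periodic boundary condition: for $\bv\in\Lambda$,
\[
(\bp\Psi)(\by+\bv)=\mathrm{i}\sigma\sum_{\bm\in\mathbb{Z}^2}(\gk+\bm\vec{\bk})\bigl|\gk+\bm\vec{\bk}\bigr|^{\sigma-2}\hat{\psi}(\bm\vec{\bk})\,e^{\mathrm{i}(\gk+\bm\vec{\bk})\cdot\by}\,e^{\mathrm{i}(\gk+\bm\vec{\bk})\cdot\bv},
\]
and since $e^{\mathrm{i}\bm\vec{\bk}\cdot\bv}=1$ for $\bv\in\Lambda$, the extra factor reduces to $e^{\mathrm{i}\gk\cdot\bv}$, giving $(\bp\Psi)(\by+\bv)=e^{\mathrm{i}\gk\cdot\bv}(\bp\Psi)(\by)$ as required. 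Combining smoothness with this boundary condition gives $\bp\Psi\in C^{\infty}_{\gk}(\br^2/\Lambda)$.

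There is really no substantive obstacle; the only thing one has to be careful of is that $\sigma-2<0$ could in principle generate a singularity of $|\cdot|^{\sigma-2}$ at the origin of the dual lattice, but this is precisely ruled out by the honeycomb symmetry point $\gk$ lying strictly off the lattice $\Lambda^*$ (so $|\gk+\bm\vec{\bk}|\geqslant |\gk|>0$ for all $\bm$, as was already used in \eqref{expan:k}). Thus the multiplier is smooth and polynomially bounded, and Schwartz-type decay of Fourier coefficients is preserved.
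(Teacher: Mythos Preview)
Your proof is correct and follows essentially the same approach as the paper: both establish rapid decay of the Fourier coefficients $\hat{\Psi}(\gk+\bm\vec{\bk})$ from smoothness, then use that the multiplier $s(\bm)$ grows only polynomially (relying on $|\gk+\bm\vec{\bk}|\geqslant|\gk|>0$) to conclude smoothness of $\bp\Psi$. The only cosmetic difference is that the paper phrases the conclusion via $\|\bp\Psi\|_{H^s(\Omega)}<\infty$ for all $s$ followed by Sobolev embedding, whereas you argue directly via uniform convergence of the termwise-differentiated series; your explicit verification of the quasi-periodic boundary condition is a nice addition the paper leaves implicit.
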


\begin{proof}
	
Since $\Psi(\by)\in C_{\gk}^{\infty}(\br^2/\Lambda)$, for any integer $N\geqslant0$, it yields that
\begin{align}
(-\Delta)^N\Psi(\by)=\sum_{\bm\in\bz^2}|\gk+\bm\vec{\bk}|^N\hat{\Psi}(\gk+\bm\vec{\bk})e^{\text{i}(\gk+\bm\vec{\bk})\cdot\by}.
\end{align}
By Plancherel theorem, we can show that
\begin{equation}
\|(-\Delta)^N\Psi(\by)\|^2_{L^2(\Omega)}=\sum_{\bm\in\bz^2}|\gk+\bm\vec{\bk}|^{2N}|\hat{\Psi}(\gk+\bm\vec{\bk})|^2 <+\infty.
\end{equation}
Consequently, $\forall ~r\geqslant0,~\mathbf{m}\in\bz^2\setminus\{\boldsymbol{0}\}$, it gives $|\hat{\Psi}(\gk+\bm\vec{\bk})|\lesssim |\bm|^{-r}$.
Therefore $\forall~s>1$, one can show that
\begin{align}
	\|\bp\Psi(\by)\|^2_{H^s(\Omega)}\approx \sum_{\bm\in\bz^2}\big(1+|\gk+\bm\vec{\bk}|\big)^{2s}|\gk+\bm\vec{\bk}|^{\sigma-1}|\hat{\Psi}(\gk+\bm\vec{\bk})|^2 < +\infty.
\end{align}
According to the Moser-type Sobolev embedding theory, we complete the proof.

\end{proof}

\section{Nonlinear Effective Dynamics}\label{sec:main}

In this section, our main goal is to establish the effective dynamics of wave packet problem derived from fractional nonlinear Schr\"odinger equation with cubic nonlinearity \eqref{eq:fnls}. Suppose that $\bx=\veps \by\in\br^2$, and we denote the fractional Schr\"odinger operator $\hsgm_{\bx}=(-\veps^2\Delta_{\bx})^{\frac{\sigma}2}+V(\frac{\bx}{\veps})$ and $\hsgm_{\by}=(-\Delta_{\by})^{\frac{\sigma}2}+V(\by)$ for convenience.

The Bloch modes/eigenfunctions are highly oscillatory with respect to $\frac{\bx}{\veps}$, it is desirable to study the wave packet framework in the following weighted-Sobolev space $H_{\veps}^{s}$.
\begin{definition}\label{def:weightsob}
	Let $f(\bx)$ be a function defined on $\br^2$. For any $s\in\mathbb{N}$, $1\leqslant p<+\infty$, $0<\veps\leqslant1$, we say $f\in W_{\veps}^{s,p}$ if
	\begin{equation}
	\|f(\bx)\|_{W_{\veps}^{s,p}}=\Big[\sum_{|\bn|\leqslant s}\|\veps^{|\bn|}\partial_{\bx}^{\bn}f(\bx)\|_{L^p(\br^2)}^p\Big]^{\frac1p} < +\infty.
	\end{equation}
	Specifically, when $s=2$, suppose that $g(\by)=f(\veps\by)$. Then we have
	\begin{equation}\label{id:weightnorm}
	\|f(\bx)\|_{H_{\veps}^{s}}= \|\veps g(\by)\|_{H^{s}(\br^2)}.
	\end{equation}
\end{definition}

We first seek the leading order approximation to the wave packet problem described by fNLS \eqref{eq:fnls}. Assume the initial condition is spectrally concentrated at the Dirac point $(\gk,E_D)$.
We briefly show the well-posedness of envelopes---the solution to nonlinear Dirac equation with a varying mass \eqref{eq:diracalpha} in the following lemma:
\begin{lemma}\label{lemma:alpha}
	For any integer $s>1$, let $\alpha_{j_0}(\cdot)\in H^{s+1}(\br^2),~j=1,~2$ admit the initial condition to \eqref{eq:diracalpha}. Then, there exists $0<T_*<\infty$ such that the unique solution to \eqref{eq:diracalpha} satisfies
	\begin{align}\label{wellpose1}
	\alpha_{1,2}(t,\bx)&\in C\big([0,T_*), H^{s+1}(\br^2)\big)\cap C^1\big([0,T_*), H^{s}(\br^2)\big).
	\end{align}
	
	Suppose that $\alpha_{j_0}(\cdot)\in \mathcal{S}(\br^2),~j=1,~2$. Then, for any $t\in[0,T_*)$ and $0\leqslant N\leqslant s-1$, $N\in\mathbb{N}$,
	\begin{equation}\label{alpha:decay}
		\|\alpha_{1,2}(t,\cdot)\|_{W^{N,1}(\br^2)}<\infty.
	\end{equation}
\end{lemma}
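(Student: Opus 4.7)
The plan is to treat \eqref{eq:diracalpha} as a semilinear first-order hyperbolic system whose principal part is a constant-coefficient massless Dirac operator, closing a standard Duhamel fixed-point argument in $C([0,T], H^{s+1})$ and then propagating polynomial spatial decay via a weighted energy estimate.

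Write \eqref{eq:diracalpha} as $\ptl_t\vec{\alpha}= L\vec{\alpha} + F(\bx,\vec{\alpha})$, where
\[
L\vec{\alpha} = -v_F^{\sigma}\begin{pmatrix}(\ptl_{x_1}+\mathrm{i}\ptl_{x_2})\alpha_2 \\ (\ptl_{x_1}-\mathrm{i}\ptl_{x_2})\alpha_1\end{pmatrix}
\]
and $F(\bx,\vec\alpha)$ collects the varying-mass terms $\mp\mathrm{i}\vartheta\kappa(\bx)\alpha_j$ and the cubic nonlinearities. A direct integration by parts shows $L^{*}=-L$ on $L^2(\br^2;\mathbb{C}^2)$; since $L$ has constant coefficients, it commutes with $(I-\Delta_{\bx})^{k/2}$, and Fourier-multiplier theory yields that $\{e^{tL}\}_{t\in\br}$ is a strongly continuous unitary group on every $H^{k}(\br^2;\mathbb{C}^2)$. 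Because $s\geqslant 2$, the embedding $H^{s+1}(\br^2)\hookrightarrow L^{\infty}$ holds and $H^{s+1}$ is a Banach algebra, so $F:H^{s+1}\to H^{s+1}$ is locally Lipschitz with at-most-cubic growth (using $\kappa\in \bigcap_{k}W^{k,\infty}$). The Duhamel map
\[
\Psi[\vec{\alpha}](t)=e^{tL}\vec{\alpha}_0 + \int_0^t e^{(t-\tau)L}F(\bx,\vec{\alpha}(\tau))\,d\tau
\]
is then a contraction on a small ball of $C([0,T], H^{s+1})$ for $T=T(\|\vec{\alpha}_0\|_{H^{s+1}})$ small enough, yielding a unique local solution which extends to a maximal time $T_*>0$ governed by the standard blow-up alternative $\limsup_{t\to T_*^{-}}\|\vec\alpha(t)\|_{H^{s+1}}=\infty$. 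Reading off $\ptl_t\vec\alpha=L\vec\alpha+F(\bx,\vec\alpha)$ together with the mapping properties $L:H^{s+1}\to H^s$ and $F:H^{s+1}\to H^{s+1}\hookrightarrow H^s$ gives $\vec\alpha\in C^1([0,T_*), H^s)$, hence \eqref{wellpose1}.

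For \eqref{alpha:decay}, Cauchy-Schwarz yields $\|\ptl_{\bx}^{\bn}\vec\alpha\|_{L^1}\leqslant \|\langle\bx\rangle^{-2}\|_{L^2}\,\|\langle\bx\rangle^{2}\ptl_{\bx}^{\bn}\vec\alpha\|_{L^2}$, so it suffices, for a fixed $M\geqslant 2$, to control $\vec\beta:=\langle\bx\rangle^{M}\vec\alpha$ in $H^{N+1}$. This $\vec\beta$ satisfies
\[
\ptl_t\vec\beta = L\vec\beta + \big([L,\langle\bx\rangle^{M}]\langle\bx\rangle^{-M}\big)\vec\beta + \langle\bx\rangle^{M}F\big(\bx,\langle\bx\rangle^{-M}\vec\beta\big),
\]
in which the commutator term is multiplication by a $C^{\infty}$ symbol of size $O(\langle\bx\rangle^{-1})$, the conjugated mass term is still bounded multiplication by $\kappa(\bx)$, and the cubic part takes the schematic form $|\vec\alpha|^{2}\vec\beta$. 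A Moser tame estimate in $H^{N+1}$ (valid for $N+1\leqslant s$) combined with the a priori $H^{s+1}$ bound on $\vec\alpha$ supplied by Step~2 produces
\[
\frac{d}{dt}\|\vec\beta(t)\|_{H^{N+1}}\leqslant C\big(1+\|\vec\alpha(t)\|_{H^{s+1}}^{2}\big)\|\vec\beta(t)\|_{H^{N+1}},
\]
and Gronwall closes the weighted bound on any $[0,T_*-\delta]$.

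The only real obstacle is bookkeeping: all Moser/tame multiplier estimates in the weighted energy step must remain at or below the regularity ceiling $s+1$ propagated by the Dirac flow, which is precisely what enforces $N\leqslant s-1$. The linear Dirac propagator is benign, the varying mass $\vartheta\kappa(\bx)$ is bounded smooth, and the cubic nonlinearity is algebraic in $H^{s+1}$, so no dispersive tools (Strichartz, Kato smoothing, etc.) are required.
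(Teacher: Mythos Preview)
Your proposal is correct and follows essentially the same route as the paper. In fact the paper does not give a detailed argument at all: it simply invokes standard local well-posedness theory for symmetric hyperbolic systems (citing Kato, Majda, Racke) for \eqref{wellpose1}, and for \eqref{alpha:decay} asserts that weighted pointwise bounds $\|(1+|\bx|^{2})^{M/2}\partial_{\bx}^{\bn}\alpha_j\|_{L^\infty}<\infty$ can be propagated by methods analogous to those in \cite{xie2019wave}, from which the $W^{N,1}$ bound follows. Your Duhamel fixed-point construction in $H^{s+1}$ and your weighted-$L^2$ commutator energy estimate for $\langle\bx\rangle^{M}\vec\alpha$ are precisely the standard arguments those references contain, carried out at the level of detail the paper omits; the only cosmetic difference is that you close the decay in weighted $L^2$ rather than weighted $L^\infty$, which is immaterial.
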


This is a standard result by the hyperbolic system theory referred to \cite{kato1975cauchy, majda2012compressible, racke1992lectures}. The conclusion \eqref{alpha:decay} is a directly algebraic decay consequence of $\|(1+|\bx|^2)^{\frac{M}{2}}\ptl_{\bx}^{\bn}\alpha_j(t,\bx)\|_{L^{\infty}(\br^2)}<\infty$ for $M>2$, $|\bn|\leqslant N$, and it can be proved by the methods analogous to that used in \cite{xie2019wave}. Among the forthcoming derivation, we require $N\geqslant4$.

Now, the main theorem reads as
\begin{theorem}\label{thm:leading}
	Let $1<\sigma\leqslant2$, $0<\veps\ll 1$, $V(\cdot)\in C^{\infty}(\br^2/\Lambda)$ be a honeycomb potential, $(\gk, E_D)$ denote a Dirac point and $\Phi_{1,2}(\cdot)$ indicate the associated eigenfunctions. Suppose that the initial envelopes $\alpha_{j_0}(\cdot)\in \mathcal{S}(\br^2)$, $j=1,2$. For any integer $s>1$, $S=\max\{s+2,5\}$, $\alpha_{1,2}(t,\bx)\in C\big([0,T_*), H^{S+1}(\br^2)\big)\cap C^1\big([0,T_*), H^{S}(\br^2)\big)$. If the initial value to \eqref{eq:fnls} satisfies
	\begin{equation}
	\|\psi^{\veps}(0,\bx)-\alpha_{j_0}(\bx)\Phi_j(\frac{\bx}{\veps}) \|_{H_{\veps}^s}\leqslant C\veps,
	\end{equation}	
	Then, there exists $0<\tilde{T}<T_*$, the wave packet problem \eqref{eq:fnls} has a unique solution
	\begin{equation}
		\psi^{\veps}(t,\bx)\in C\big([0,\tilde{T}], H_{\veps}^s\big)
	\end{equation}and
	\begin{equation}
	\sup_{0\leqslant t\leqslant \tilde{T}}\|\psi^{\veps}(t,\bx)-e^{-\mathrm{i}E_D\frac{t}{\veps}}\alpha_j(t,\bx)\Phi_j(\frac{\bx}{\veps})\|_{H_{\veps}^s}\leqslant C\veps,
	\end{equation}
	where $C$ is a positive constant independent of $\veps$.
\end{theorem}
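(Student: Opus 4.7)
The plan is a WKB-type multi-scale construction combined with a Gronwall energy estimate in the weighted Sobolev space $H^s_\veps$. First, local well-posedness of \eqref{eq:fnls} in $H^s_\veps$ follows from the self-adjointness of $\hsgm_\bx$ on $L^2(\br^2)$---which generates a strongly continuous unitary group preserving $H^s_\veps$ with constants independent of $\veps$---together with local Lipschitz continuity of the cubic nonlinearity in $H^s_\veps$ (valid for $s>1$ in two dimensions via $H^s\hookrightarrow L^\infty$). The goal is an a priori estimate that renders the existence time uniform in $\veps$. I will build a refined approximate solution
\begin{equation*}
\psi^\veps_{\mathrm{app}}(t,\bx) = e^{-\mathrm{i}E_D t/\veps}\Bigl[\alpha_1(t,\bx)\Phi_1(\bx/\veps)+\alpha_2(t,\bx)\Phi_2(\bx/\veps)+\veps\,\chi(t,\bx,\bx/\veps)\Bigr],
\end{equation*}
where the corrector $\chi(t,\bx,\by)$ is quasi-periodic in $\by$ with quasi-momentum $\gk$ and lies in the orthogonal complement $P_\perp$ of $\mathrm{span}\{\Phi_1,\Phi_2\}\subset L^2_{\gk}(\br^2/\Lambda)$.

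Substituting $\psi^\veps_{\mathrm{app}}$ into \eqref{eq:fnls} and organizing by powers of $\veps$, the core computation is the action of $\hsgm_\bx$ on a two-scale product $\alpha(\bx)\Phi_j(\bx/\veps)$. Combining the Fourier transform in $\bx$ with the Floquet-Bloch series in $\bx/\veps$, the total wave vector of each mode is $\veps\bxi+\gk+\bm\vec{\bk}$, and exactly the same Taylor expansion used in the proof of Theorem \ref{thm:diracpoint},
\begin{equation*}
|\veps\bxi+\gk+\bm\vec{\bk}|^\sigma=|\gk+\bm\vec{\bk}|^\sigma+\veps\,\bxi\cdot\sigma(\gk+\bm\vec{\bk})|\gk+\bm\vec{\bk}|^{\sigma-2}+\veps^2 c^\sigma_{\bm,\veps\bxi}|\bxi|^2,
\end{equation*}
yields $\hsgm_\bx[\alpha(\bx)\Phi_j(\bx/\veps)]=E_D\,\alpha\Phi_j(\bx/\veps)-\veps\,\nabla_\bx\alpha(\bx)\cdot[\bp\Phi_j](\bx/\veps)+\mathcal{O}(\veps^2)$. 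Collecting terms at order $\veps$ in the residual and taking the $P_\parallel$ projection onto $\{\Phi_1,\Phi_2\}$, the identities \eqref{id:phi1pphi2}, \eqref{id:vartheta} and \eqref{id:b1b2} deliver exactly the nonlinear Dirac system \eqref{eq:diracalpha} for $(\alpha_1,\alpha_2)$: the projection of $\nabla\alpha_j\cdot\bp\Phi_j$ produces the $v_F^\sigma(\ptl_{x_1}\pm\mathrm{i}\ptl_{x_2})$ coupling, the projection of $\kappa W\alpha_j\Phi_j$ produces the $\mathrm{i}\vartheta\kappa$ mass term, and the cubic inner products collapse to $b_1,b_2$. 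The $P_\perp$ component then fixes the corrector $\chi$ uniquely via the Fredholm alternative, using that $(\hsgm_\by-E_D)\colon P_\perp H^\sigma_\gk\to P_\perp L^2_\gk$ is boundedly invertible; Corollary \ref{coro:smooth} and the Schwartz regularity \eqref{alpha:decay} of $\alpha_j$ guarantee $\chi\in C^\infty$ with uniform-in-$\veps$ bounds. With these choices, the residual $r^\veps$ satisfies $\|r^\veps\|_{H^s_\veps}\leqslant C\veps^2$, provided $\alpha_j\in H^{S+1}$ with $S=\max\{s+2,5\}$ so that the $\mathcal{O}(\veps^2)$ tail (which involves $c^\sigma_{\bm,\veps\bxi}$ against derivatives of $\alpha_j$) can be absorbed.

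Finally, the error $\eta^\veps=\psi^\veps-\psi^\veps_{\mathrm{app}}$ satisfies
\begin{equation*}
\mathrm{i}\veps\ptl_t\eta^\veps=\hsgm_\bx\eta^\veps+\veps\kappa(\bx) W(\bx/\veps)\eta^\veps+\veps\mu\bigl(|\psi^\veps|^2\psi^\veps-|\psi^\veps_{\mathrm{app}}|^2\psi^\veps_{\mathrm{app}}\bigr)-r^\veps,
\end{equation*}
with $\|\eta^\veps(0)\|_{H^s_\veps}\leqslant C\veps$ by hypothesis. Using the self-adjointness of $\hsgm_\bx$ together with the Plancherel-type equivalence \eqref{id:hsnorm} rescaled to the macroscopic variable $\bx=\veps\by$, the principal dispersive term drops out of the $H^s_\veps$ energy identity. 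The cubic difference is handled by Moser-type product estimates (using $H^s\hookrightarrow L^\infty$ in $\br^2$), $\veps\kappa W$ is absorbed as a bounded $L^\infty$ multiplier, and the source contributes $\veps^{-1}\|r^\veps\|_{H^s_\veps}=\mathcal{O}(\veps)$. Gronwall's inequality then yields $\|\eta^\veps(t)\|_{H^s_\veps}\leqslant C\veps$ on some $[0,\tilde T]$ with $\tilde T<T_*$ independent of $\veps$, and a bootstrap extends local existence up to $\tilde T$. The main obstacle is controlling the nonlocal tail $\veps^2 c^\sigma_{\bm,\veps\bxi}|\bxi|^2$ uniformly in the weighted norm: unlike the integer-$\sigma$ case, the Floquet sum does not truncate, so one must marry the uniform boundedness of $c^\sigma_{\bm,\bkappa}$ (cf.\ Remark following \eqref{expan:k}) with the rapid decay of $\hat\Phi_j(\gk+\bm\vec{\bk})$ from the smoothness of $\Phi_j$ and of $\hat\alpha_j$ from $\alpha_j\in\mathcal{S}(\br^2)$; a secondary difficulty is propagating sufficient regularity of the corrector $\chi$ through the resolvent so that it can be differentiated $s$ times in the energy identity.
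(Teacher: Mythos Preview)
Your proposal is essentially the approach the paper takes: the paper does not prove Theorem~\ref{thm:leading} directly but defers to the machinery built for the second-order result (Theorem~\ref{thm:main}), namely an approximate solution with corrector $u_1^{\perp}$ (your $\chi$), Duhamel's formula for the error, and a nonlinear Gronwall argument. Your energy-identity formulation is equivalent to the paper's Duhamel estimate via the unitarity of $e^{-\mathrm{i}\hsgm_{\bx}t/\veps}$ and the norm equivalence \eqref{id:hsnorm}.

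Two points of comparison are worth making. First, the paper's remark after the proof of Theorem~\ref{thm:main} points to the Bloch spectral decomposition of Section~\ref{sec:proof} as the mechanism for bounding the explicit residuals. You bypass this entirely by including the corrector $\chi$, which forces the order-$\veps$ residual to vanish identically (your $R_1=0$), so the remaining residual is $\mathcal{O}(\veps^2)$ pointwise in time and the naive bound $\veps^{-1}\int_0^t\|r^\veps\|\,ds=\mathcal{O}(\veps)$ suffices; no frequency splitting near and away from $\gk$ is needed at this order. This is a genuine simplification relative to what the paper's remark suggests, though the paper's heavier machinery is what buys the sharper $\mathcal{O}(\veps^2)$ estimate in Theorem~\ref{thm:main}. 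Second, your invocation of ``Gronwall's inequality'' hides the reason $\tilde T<T_*$ is in general strict: after rescaling $\omega=\|\eta^\veps\|_{H^s_\veps}/\veps$, the cubic term contributes $\omega^3$ and one lands on $\omega'\leqslant C_2(\omega+\omega^3)$ with $\omega(0)=\mathcal{O}(1)$, a Riccati-type comparison with finite blow-up time; the paper makes this explicit in \eqref{inequal:gronwall}. You should state this, since a linear Gronwall alone would (misleadingly) give the full interval $[0,T_*)$.
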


\begin{remark}
	This theorem gives the simplest form of asymptotic solution, which is parallel to that of linear wave packet problems \cite{fefferman2014wave,xie2019wave}. However, one main difference is the lifetime of validity may not reach effective dynamics as \eqref{wellpose1} due to the case of nonlinearity. To deal with this nonlinear effect, we proceed to derive a second order approximation. The proof of Theorem \ref{thm:leading} can be also followed by a modified spectral decomposition idea in the latter second order justification, which is referred from \cite{fefferman2014wave,xie2019wave}.
\end{remark}

Notice that the wave packet evolution is of two distinct spacial scales. Then we need clarify the product rule of fractional derivative for $\Gamma(\cdot)\in H^{\sigma}(\br^2)$ and $\Psi(\cdot)\in C_{\gk}^{\infty}(\br^2/\Lambda)$.

\begin{proposition}\label{prop:dividelaplacian}
	Let $\Gamma(\cdot)\in H^{\sigma}(\br^2)$ be algebraic decay at infinity, $\Psi(\cdot)\in C_{\gk}^{\infty}(\br^2/\Lambda)$ and $0<\veps\ll1$. The following product rule holds
	\begin{equation}\label{id:expan}
	 (-\veps^2\Delta_{\bx})^{\frac{\sigma}{2}}\big(\Gamma(\bx)\Psi(\frac{\bx}{\veps})\big)=\Gamma(\bx)(-\veps^2\Delta_{\bx})^{\frac{\sigma}{2}}\Psi(\frac{\bx}{\veps})-\veps\nabla_{\bx}\Gamma(\bx)\cdot \boldsymbol{p}^{\sigma}\Psi(\frac{\bx}{\veps})+q^{\sigma}[\Gamma(\bx)\Psi(\frac{\bx}{\veps})],
	\end{equation}
	where $\boldsymbol{p}^{\sigma}$ is defined in \eqref{psgm}, and
	\begin{align}	
	q^{\sigma}[\Gamma(\bx) \Psi(\frac{\bx}{\veps})]=&~ \sum_{\bn\in\mathbb{Z}^2}\int_{\br^2_{\bxi}} C_{\bn}(\veps\bxi) \widehat{\Gamma}(\bxi) e^{\mathrm{i}\bxi\cdot\bx}d\bxi \hat{\Psi}(\gk-\bn\vec{\bk})e^{\mathrm{i}(\gk-\bn\vec{\bk})\cdot\frac{\bx}{\veps}} .
	\end{align}	
	Here $C_{\bn}(\veps\bxi)=|\gk-\bn\vec{\bk}+\veps\bxi|^{\sigma}-|\gk-\bn\vec{\bk}|^{\sigma}+\mathrm{i}\sigma|\gk-\bn\vec{\bk}|^{\sigma-2}(\gk-\bn\vec{\bk})\cdot\mathrm{i}\veps\bxi$ and $|C_{\bn}(\veps\bxi)|\lesssim |\veps\bxi|^{\sigma}$ uniformly for all $\bn\in\bz^2$, $\bxi\in\br^2$.
	
	Moreover, for any $s\geqslant 0$, if $\Gamma(\cdot)\in H^{s+3}(\br^2)$, then
	\begin{align}\label{id:qsgm}
		\|q^{\sigma}[\Gamma(\bx) \Psi(\frac{\bx}{\veps})]\|_{H_{\veps}^s}=\veps^2 \mathcal{C}[\Gamma(\bx)\Psi(\frac{\bx}{\veps})]+\veps^3\mathcal{D}[\Gamma(\bx)\Psi(\frac{\bx}{\veps})].
	\end{align}
where $\|\mathcal{C}[\Gamma(\bx)\Psi(\frac{\bx}{\veps})]\|_{H_{\veps}^s}$ and $\|\mathcal{D}[\Gamma(\bx)\Psi(\frac{\bx}{\veps})]\|_{H_{\veps}^s}\sim \mathcal{O}(1)$.
	
\end{proposition}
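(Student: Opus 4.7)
The plan is to treat $(-\veps^2\Delta_{\bx})^{\frac{\sigma}{2}}$ as a Fourier multiplier on $\br^2_{\bx}$, simultaneously expand $\Gamma$ as a continuous Fourier integral and $\Psi$ as its quasi-periodic Fourier series at wave vector $\gk$, and then Taylor-expand the symbol $|\gk-\bn\vec{\bk}+\veps\bxi|^{\sigma}$ around $\veps\bxi=\boldsymbol{0}$. Writing $\Gamma(\bx)=\int_{\br^2_{\bxi}}\widehat{\Gamma}(\bxi)e^{\mathrm{i}\bxi\cdot\bx}d\bxi$ and $\Psi(\by)=\sum_{\bn\in\bz^2}\hat{\Psi}(\gk-\bn\vec{\bk})e^{\mathrm{i}(\gk-\bn\vec{\bk})\cdot\by}$, the product $\Gamma(\bx)\Psi(\bx/\veps)$ becomes a double sum/integral over plane waves of frequency $\bxi+(\gk-\bn\vec{\bk})/\veps$, and the multiplier simply attaches the factor $|\gk-\bn\vec{\bk}+\veps\bxi|^{\sigma}$ to each mode. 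The zeroth-order Taylor term $|\gk-\bn\vec{\bk}|^{\sigma}$ reassembles via \eqref{id:delta} into $\Gamma(\bx)(-\veps^2\Delta_{\bx})^{\frac{\sigma}{2}}\Psi(\bx/\veps)$; the first-order term $\sigma|\gk-\bn\vec{\bk}|^{\sigma-2}(\gk-\bn\vec{\bk})\cdot\veps\bxi$, after replacing $\bxi\widehat{\Gamma}(\bxi)$ by $-\mathrm{i}\nabla\Gamma$ and comparing with the definition \eqref{psgm} of $\bp$, reassembles into $-\veps\nabla_{\bx}\Gamma(\bx)\cdot\bp\Psi(\bx/\veps)$; what remains is precisely $q^{\sigma}[\Gamma\Psi]$ with symbol $C_{\bn}(\veps\bxi)$ as stated.

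For the uniform bound $|C_{\bn}(\veps\bxi)|\lesssim|\veps\bxi|^{\sigma}$, I would split into an inner regime $|\veps\bxi|\leqslant\frac{1}{2}|\gk-\bn\vec{\bk}|$ and its complement. In the inner regime, $|\cdot|^{\sigma}$ is smooth on the relevant ball, its Hessian is bounded by $C|\gk-\bn\vec{\bk}|^{\sigma-2}$, and the second-order Taylor remainder yields $|C_{\bn}(\veps\bxi)|\leqslant C\veps^2|\bxi|^2|\gk-\bn\vec{\bk}|^{\sigma-2}$. In the outer regime, a triangle-inequality estimate together with $\sigma\leqslant 2$ gives $|C_{\bn}(\veps\bxi)|\lesssim|\veps\bxi|^{\sigma}$. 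The crucial structural input is $|\gk-\bn\vec{\bk}|\geqslant|\gk|>0$ uniformly in $\bn$ (since $\gk$ is a Dirac vertex and never coincides with a dual-lattice point), which keeps every Fourier patch uniformly away from the singularity of $|\cdot|^{\sigma}$. Consequently $|\gk-\bn\vec{\bk}|^{\sigma-2}\leqslant|\gk|^{\sigma-2}$ globally, and in the outer regime $|\veps\bxi|\geqslant|\gk|/2$ forces $|\veps\bxi|^{\sigma-2}$ bounded, upgrading the estimate to $|C_{\bn}(\veps\bxi)|\lesssim\veps^2|\bxi|^2$ uniformly in $\bn$.

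To obtain the $\veps^2+\veps^3$ decomposition, I would push the Taylor expansion one more order within the inner regime, writing $C_{\bn}(\veps\bxi)=\frac{\veps^2}{2}\bxi^{\top}H(\gk-\bn\vec{\bk})\bxi+R_{3,\bn}(\veps\bxi)$ with $|R_{3,\bn}|\lesssim\veps^3|\bxi|^3|\gk-\bn\vec{\bk}|^{\sigma-3}\lesssim\veps^3|\bxi|^3$, where $H$ is the Hessian of $|\cdot|^{\sigma}$. I would then define $\mathcal{C}[\Gamma\Psi]$ by packaging the explicit Hessian contribution, which via $\bxi_i\bxi_j\widehat{\Gamma}\leftrightarrow-\ptl_i\ptl_j\Gamma$ takes the form $-\frac{1}{2}\ptl_i\ptl_j\Gamma(\bx)\cdot(\text{Hessian-type multiplier applied to }\Psi)(\bx/\veps)$, and $\mathcal{D}[\Gamma\Psi]$ by the rescaled cubic remainder $\veps^{-3}R_{3,\bn}$ together with the rescaled outer-regime piece. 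In the outer regime $|\bxi|\geqslant|\gk|/(2\veps)$, the identity $|\veps\bxi|^{\sigma}=|\veps\bxi|^{\sigma-3}\cdot\veps^3|\bxi|^3$ with $\sigma-3<0$ and $|\veps\bxi|$ bounded below gives $|C_{\bn}(\veps\bxi)|\lesssim\veps^3|\bxi|^3$ as well. The $H_{\veps}^{s}$-norm estimate then follows from the Plancherel-type identity \eqref{id:hsnorm}: the $\veps$-weighted derivatives $\veps^{|\bm|}\ptl_{\bx}^{\bm}$ applied to $e^{\mathrm{i}\bxi\cdot\bx}e^{\mathrm{i}(\gk-\bn\vec{\bk})\cdot\bx/\veps}$ produce multipliers of the form $(\veps\bxi+\gk-\bn\vec{\bk})^{\bm}$, whose large-$\bn$ part is absorbed by the rapid decay of $\hat{\Psi}(\gk-\bn\vec{\bk})$ (since $\Psi\in C_{\gk}^{\infty}$) and whose $\bxi$-part, of order at most $|\bxi|^{s}$, is absorbed by the two or three extra $\bxi$-powers coming from $\mathcal{C}$ or $\mathcal{D}$, i.e.\ by the assumption $\Gamma\in H^{s+3}(\br^2)$.

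The main obstacle will be the outer regime $|\veps\bxi|>|\gk|/2$: for $1<\sigma<2$ the raw symbol estimate $|C_{\bn}(\veps\bxi)|\lesssim|\veps\bxi|^{\sigma}$ produces only $\veps^{\sigma}$ scaling, short of both $\veps^2$ and $\veps^3$. The resolution is to recognize that such $\bxi$ are forced into the tail $|\bxi|\gtrsim\veps^{-1}$, so the missing powers of $\veps$ can always be paid for by polynomial decay of $\widehat{\Gamma}$, which is precisely what the $H^{s+3}$ assumption provides. A secondary technical point is the uniform-in-$\bn$ summability of the two-scale representation; this is handled by combining the lower bound $|\gk-\bn\vec{\bk}|\geqslant|\gk|$ with the rapid decay of $\hat{\Psi}$ furnished by $\Psi\in C_{\gk}^{\infty}(\br^2/\Lambda)$.
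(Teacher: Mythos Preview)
Your proposal is correct and follows essentially the same route as the paper: write the product as a double Fourier expansion, apply the multiplier $|\gk-\bn\vec{\bk}+\veps\bxi|^{\sigma}$ mode-by-mode, Taylor-expand to peel off the $\Gamma\cdot(-\veps^2\Delta)^{\sigma/2}\Psi$ and $-\veps\nabla\Gamma\cdot\bp\Psi$ terms, and then control the remainder $C_{\bn}(\veps\bxi)$ via an inner/outer split at $|\veps\bxi|\sim|\gk|$, using $|\gk-\bn\vec{\bk}|\geqslant|\gk|>0$ and the $H^{s+3}$ regularity of $\Gamma$ to absorb the outer tail. The only notable difference is that the paper first rescales to the $\by=\bx/\veps$ variable and invokes the Poisson summation formula explicitly to justify the double Fourier representation of $\Gamma(\veps\by)\Psi(\by)$, whereas you assert this representation directly; both are valid, but the Poisson-summation route makes the interchange of sum, integral, and nonlocal operator more transparent.
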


\begin{proof}
By the fractional Laplacian defined in \eqref{id:delta},\eqref{laplacian:whole}, we introduce the construction by adopting of variable $\by$ for convenience. Then,
\begin{align}\label{frac:fourier}
\nonumber	&~(-\Delta)^{\frac{\sigma}{2}}\big[\Gamma(\veps\by)\Psi(\by)\big] \\
\nonumber	=&~ \frac{1}{4\pi^2}\int_{\br^2}|\bxi|^{\sigma}\int_{\br^2}\Gamma(\veps\gz)\Psi(\gz)e^{-\text{i}\bxi\cdot \gz}d\gz~ e^{\text{i}\bxi\cdot\by}d\bxi \\
=&~ \frac{1}{4\pi^2}\int_{\br^2}|\bxi|^{\sigma}\int_{\Omega}\sum_{\bm\in\mathbb{Z}^2}\Gamma\big(\veps(\gz+\bm\vec{\bv})\big)e^{\text{i}(\gk-\bxi)\cdot(\gz+\bm\vec{\bv})}\Psi(\gz)e^{-\text{i}\gk\cdot\gz}d\gz~ e^{\text{i}\bxi\cdot\by}d\bxi.
\end{align}
Utilizing Poisson-Summation formula, for any $\bn\in \bz^2$, we take Fourier transform to have
\begin{align}
\nonumber	 \frac{1}{\veps^2}\widehat{\Gamma}(\frac{\bxi-\gk+\bn\vec{\bk}}{\veps})&=\frac{1}{4\pi^2}\int_{\br^2}\Gamma(\veps\gz)e^{-\mathrm{i}(\bxi-\gk+\bn\vec{\bk})\cdot\gz}d\gz \\
&=\frac{1}{4\pi^2}\int_{\Omega}\sum_{\bm\in\mathbb{Z}^2}\Gamma\big(\veps(\gz+\bm\vec{\bv})\big)e^{\text{i}(\gk-\bxi)\cdot(\gz+\bm\vec{\bv})}e^{-\text{i}\bn\vec{\bk}\cdot\gz}d\gz.
\end{align}
By Parseval's identity, one can acquire that
\begin{equation}\label{possion:1}
\sum_{\bm\in\mathbb{Z}^2}\Gamma(\veps(\gz+\bm\vec{\bv}))e^{\text{i}(\gk-\bxi)\cdot(\gz+\bm\vec{\bv})}=\frac{4\pi^2}{|\Omega|}\sum_{\bn\in\mathbb{Z}^2}\frac{1}{\veps^2}\widehat{\Gamma}(\frac{\bxi-\gk+\bn\vec{\bk}}{\veps})e^{\mathrm{i}\bn\vec{\bk}\cdot\gz}.
\end{equation}

Substituting \eqref{possion:1} into \eqref{frac:fourier} yields
\begin{align}
\nonumber	(-\Delta)^{\frac{\sigma}{2}}\big[\Gamma(\veps\by)\Psi(\by)\big] =&~ \int_{\br^2}|\bxi|^{\sigma}\sum_{\bn\in\mathbb{Z}^2}\frac{1}{\veps^2}\widehat{\Gamma}(\frac{\bxi-\gk+\bn\vec{\bk}}{\veps})\frac{1}{|\Omega|}\int_{\Omega}\Psi(\gz)e^{-\text{i}(\gk-\bn\vec{\bk})\cdot\gz}d\gz~ e^{\text{i}\bxi\cdot\by}d\bxi \\
\nonumber	=&~ \int_{\br^2}|\bxi|^{\sigma}\sum_{\bn\in\mathbb{Z}^2}\widehat{\Gamma}(\frac{\bxi-\gk+\bn\vec{\bk}}{\veps}) e^{\text{i}\bxi\cdot\by}d\frac{\bxi}{\veps} ~ \hat{\Psi}(\gk-\bn\vec{\bk}) \\
=&~
\sum_{\bn\in\mathbb{Z}^2}\int_{\br^2}|\veps\tilde{\bxi}+\gk-\bn\vec{\bk}|^{\sigma}\widehat{\Gamma}(\tilde{\bxi}) e^{\text{i}\tilde{\bxi}\cdot\veps\by}d\tilde{\bxi}~\hat{\Psi}(\gk-\bn\vec{\bk})e^{\text{i}(\gk-\bn\vec{\bk})\cdot\by},
\end{align}
where $\veps\tilde{\bxi}=\bxi-\gk+\bn\vec{\bk}$. We drop the superscript of $\tilde{\bxi}$ and deduce that
\begin{equation}
|\veps\bxi+\gk-\bn\vec{\bk}|^{\sigma}=|\gk-\bn\vec{\bk}|^{\sigma}-\mathrm{i}\sigma|\gk-\bn\vec{\bk}|^{\sigma-2}(\gk-\bn\vec{\bk})\cdot\mathrm{i}\veps\bxi+C_{\bn}(\veps\bxi),
\end{equation}
where the residual $|C_{\bn}(\veps\bxi)|\leqslant C|\veps\bxi|^{\sigma}$ holds uniformly for any $1<\sigma\leqslant2$, $\bxi\in \br^2$ and $\bn\in\mathbb{Z}^2$, see the appendix in \cite{hong2017new} for details. Thus, we have
\begin{align}
\nonumber	&~(-\Delta)^{\frac{\sigma}{2}}\big[\Gamma(\veps\by)\Psi(\by)\big] \\
\nonumber	=&~\int_{\br^2}\widehat{\Gamma}(\bxi) e^{\text{i}\bxi\cdot\veps\by}d\bxi\sum_{\bn\in\mathbb{Z}^2}|\gk-\bn\vec{\bk}|^{\sigma}\hat{\Psi}(\gk-\bn\vec{\bk})e^{\text{i}(\gk-\bn\vec{\bk})\cdot\by}\\
\nonumber	&~ -\veps\int_{\br^2}\text{i}\bxi\widehat{\Gamma}(\bxi) e^{\text{i}\bxi\cdot\veps\by}d\bxi\cdot \text{i}\sigma\sum_{\bn\in\mathbb{Z}^2}|\gk-\bn\vec{\bk}|^{\sigma-2}(\gk-\bn\vec{\bk})\hat{\Psi}(\gk-\bn\vec{\bk})e^{\text{i}(\gk-\bn\vec{\bk})\cdot\by}\\
\nonumber	&~ +\sum_{\bn\in\mathbb{Z}^2}\int_{\br^2}C_{\bn}(\veps\bxi)\widehat{\Gamma}(\bxi) e^{\text{i}\bxi\cdot\veps\by}d\bxi~\hat{\Psi}(\gk-\bn\vec{\bk})e^{\text{i}(\gk-\bn\vec{\bk})\cdot\by} \\
	=&~ \Gamma(\veps\by)(-\Delta_{\by})^{\frac{\sigma}{2}}\Psi(\by)-\veps\nabla_{\bx}\Gamma(\veps\by)\cdot \boldsymbol{p}^{\sigma}\Psi(\by) + q^{\sigma}[\Gamma(\veps\by)\Psi(\by)],
\end{align}	
Namely, it leads to
\begin{equation}
	(-\veps^2\Delta_{\bx})^{\frac{\sigma}{2}}\big(\Gamma(\bx)\Psi(\frac{\bx}{\veps})\big)=\Gamma(\bx)(-\veps^2\Delta_{\bx})^{\frac{\sigma}{2}}\Psi(\frac{\bx}{\veps})-\veps\nabla_{\bx}\Gamma(\bx)\cdot \boldsymbol{p}^{\sigma}\Psi(\frac{\bx}{\veps})+q^{\sigma}[\Gamma(\bx)\Psi(\frac{\bx}{\veps})],
\end{equation}
where
\begin{align}
q^{\sigma}[\Gamma(\bx)\Psi(\frac{\bx}{\veps})]=\sum_{\bn\in\mathbb{Z}^2}\int_{\br^2} C_{\bn}(\veps\bxi) \widehat{\Gamma}(\bxi) e^{\text{i}\bxi\cdot\bx}d\bxi~ \hat{\Psi}(\gk-\bn\vec{\bk})e^{\text{i}(\gk-\bn\vec{\bk})\cdot\frac{\bx}{\veps}}.
\end{align}

Denote that $\bxi=(\xi_1,\xi_2)^T$ and $\gk-\bn\vec{\bk}=\big((\gk-\bn\vec{\bk})_1,(\gk-\bn\vec{\bk})_2\big)^T$. If $|\veps\bxi|\ll 1$, Taylor's formula yields that
\begin{align}
\nonumber	C_{\bn}(\veps\bxi)=&~\sigma(\sigma-2)|\gk-\bn\vec{\bk}|^{\sigma-4}(\gk-\bn\vec{\bk})_1^2(\veps\xi_1)^2 +\sigma|\gk-\bn\vec{\bk}|^{\sigma-2}(\veps\xi_1)^2 \\
\nonumber	&+\sigma(\sigma-2)|\gk-\bn\vec{\bk}|^{\sigma-4}(\gk-\bn\vec{\bk})_2^2(\veps\xi_2)^2 +\sigma|\gk-\bn\vec{\bk}|^{\sigma-2}(\veps\xi_2)^2 \\
\nonumber	&+\sigma(\sigma-2)|\gk-\bn\vec{\bk}|^{\sigma-4}(\gk-\bn\vec{\bk})_1(\gk-\bn\vec{\bk})_22\veps^2\xi_1\xi_2+\mathcal{O}(|\veps\bxi|^3) \\
	:=&~ \veps^2 \tilde{C}_{\bn}(\bxi)+ \mathcal{O}(|\veps\bxi|^3) .
\end{align}

Notice that $|\gk-\bn\vec{\bk}|$ has a lower positive bound $|\gk|$ for all $\bn\in\bz^2$. While $|\veps\bxi|<\frac12 |\gk|$, there exists a constant $0<C<+\infty$ such that
\begin{align}
|C_{\bn}(\veps\bxi)-\veps^2 \tilde{C}_{\bn}(\bxi)|< C|\veps\bxi|^3, \quad \forall~\bn\in\bz^2.
\end{align}

Thus, we can divide $q^{\sigma}[\Gamma(\bx)\Psi(\frac{\bx}{\veps})]$ into the following two parts,
\begin{align}
\nonumber	q^{\sigma}[\Gamma(\bx)\Psi(\frac{\bx}{\veps})]=&~\veps^2\sum_{\bn\in\mathbb{Z}^2}\int_{\br^2}  \tilde{C}_{\bn}(\bxi) \widehat{\Gamma}(\bxi) e^{\text{i}\bxi\cdot\bx}d\bxi\hat{\Psi}(\gk-\bn\vec{\bk})e^{\text{i}(\gk-\bn\vec{\bk})\cdot\frac{\bx}{\veps}} \\
\nonumber	&~+\sum_{\bn\in\mathbb{Z}^2}\int_{\br^2} \big(C_{\bn}(\veps\bxi)- \veps^2 \tilde{C}_{\bn}(\bxi)\big) \widehat{\Gamma}(\bxi) e^{\text{i}\bxi\cdot\bx}d\bxi\hat{\Psi}(\gk-\bn\vec{\bk})e^{\text{i}(\gk-\bn\vec{\bk})\cdot\frac{\bx}{\veps}} \\
:=&~ \veps^2 \mathcal{C}[\Gamma(\bx)\Psi(\frac{\bx}{\veps})]+\veps^3 \mathcal{D}[\Gamma(\bx)\Psi(\frac{\bx}{\veps})] .
\end{align}

More precisely, for any $s\geqslant0$, if $\Gamma(\cdot)\in H^{s+3}(\br^2)$ and $\Psi(\cdot)\in C_{\gk}^{\infty}(\br^2/\Lambda)$, we employ the fact that $\hat{\Psi}(\gk-\bn\vec{\bk})$ will be rapidly decay as $|\bn|\rightarrow+\infty$ to obtain
\begin{align}
\nonumber	\|\mathcal{C}[\Gamma(\bx)\Psi(\frac{\bx}{\veps})]\|_{H_{\veps}^s} &\leqslant C \sum_{\bn\in\bz^2} \Big\|\int_{\br^2}\tilde{C}_{\bn}(\bxi)\widehat{\Gamma}(\bxi) e^{\text{i}\bxi\cdot\bx}d\bxi\Big\|_{H^s(\br^2)} (1+|\gk-\bn\vec{\bk}|^s) |\hat{\Psi}(\gk-\bn\vec{\bk})| \\
	&\leqslant C<+\infty.
\end{align}
and 
\begin{align}
\nonumber 	&~\|\mathcal{D}[\Gamma(\bx)\Psi(\frac{\bx}{\veps})]\|_{H_{\veps}^s} \\
\nonumber	\leqslant&~ C\Big[\int_{|\bxi|\leqslant\frac{|\gk|}{2\veps}} (1+|\bxi|)^{2s}| \bxi|^6 |\widehat{\Gamma}(\bxi)|^2 d\bxi\Big]^{\frac12} + C\frac{1}{\veps^3}\Big[\int_{|\bxi|>\frac{|\gk|}{2\veps}} (1+|\bxi|)^{2s}|\veps \bxi|^{4} |\widehat{\Gamma}(\bxi)|^2 d\bxi\Big]^{\frac12} \\
\nonumber	\leqslant&~ C+ C\Big[\int_{|\bxi|>\frac{|\gk|}{2\veps}} (1+|\bxi|)^{2s}|\bxi|^6 |\widehat{\Gamma}(\bxi)|^2 d\bxi\Big]^{\frac12} \\
	\leqslant &~ C<+\infty.
\end{align}
This completes the proof of Proposition \ref{prop:dividelaplacian}.

\end{proof}

\subsection{Construction of the second order approximation}

According to the above proposition, we establish a more accurate approximate solution $\psi_{\veps}$ to handle the nonlinear effect in fNLS\eqref{eq:fnls}. The formal solution $\psi^{\veps}$ is in the form of
\begin{align}
	&\psi^{\veps}(t,\bx)=\psi_{\veps}(t,\bx)+\varphi(t,\bx), \label{id:asysolu11}\\
	\text{and}\quad&\psi_{\veps}(t,\bx)= e^{-\mathrm{i}E_D\frac{t}{\veps}}\Big[\alpha_{j}(t,\bx)\Phi_j(\frac{\bx}{\veps})+\veps\big(\beta_{j}(t,\bx)\Phi_j(\frac{\bx}{\veps})+ u_1^{\perp}(t,\bx)\big)\Big]. \label{id:asymsolu}
\end{align}
Here $\varphi(t,\bx)$ is the error, and $u_1^{\perp}(t,\bx)$ in the second term will contribute to eliminate the first order resonant residual.

Substituting \eqref{id:asysolu11}, \eqref{id:asymsolu} into fNLS \eqref{eq:fnls} implies
\begin{equation}\label{eq:varphi}
	\mathrm{i}\veps\ptl_t\varphi-\hsgm_{\bx}\varphi= e^{-\mathrm{i}E_D\frac{t}{\veps}}\big(R_1+R_2+R_3\big)	+N[\varphi].
\end{equation}
where $R_1$, $R_2$ are residuals with coefficients order in $\veps,~\veps^2$ respectively, all higher order residuals are involved in $R_3$, and $N[\varphi]$ is a polynomial of $\varphi$ and $\overline{\varphi}$.

Now that our goal is to show the second order approximation, $R_1$ determines the leading order term should be vanished.
Specifically,
\begin{equation}
	R_1(t,\bx)= \veps\Big[-\mathrm{i}\ptl_t\alpha_j\Phi_j-\nabla_{\bx}\alpha_j\cdot\bp\Phi_j +\kappa\alpha_jW\Phi_j+\mu\overline{\alpha_j}\alpha_k\alpha_l\overline{\Phi_j}\Phi_k\Phi_l\Big] +\veps(\hsgm_{\by}-E_D)u_1^{\perp}.
\end{equation}
Here and below, we adopt the Einstein notation for convenience with $j,~k,~l=1,~2$. Let $\ml=\hsgm_{\by}-E_D$ and $\mathcal{L}_{\perp}^{^{-1}}=P_{\perp}\ml^{^{-1}}P_{\perp}:L_{\gk}^2(\br^2/\Lambda)\rightarrow P_{\perp}L_{\gk}^2(\br^2/\Lambda)$. $u_1^{\perp}$ is defined as the orthogonal complement of $\ker(\ml)^{\perp}$ in the view of $L_{\gk}^2(\br^2/\Lambda)$,
\begin{align}\label{id:inver}
\nonumber	u_1^{\perp}(t, \bx) =&~ \nabla_{\bx}\alpha_{j}(t,\bx)\cdot \vbl \boldsymbol{p}^{\sigma}\Phi_j(\frac{\bx}{\veps}) -\kappa(\bx)\alpha_{j}(t,\bx)\vbl \big[W(\frac{\bx}{\veps})\Phi_j(\frac{\bx}{\veps})\big]\\
&-\mu\overline{\alpha_{j}(t,\bx)}\alpha_{k}(t,\bx)\alpha_{l}(t,\bx)\vbl\big[\overline{\Phi_j(\frac{\bx}{\veps})}\Phi_k(\frac{\bx}{\veps})\Phi_l(\frac{\bx}{\veps}) \big].
\end{align}
From the Corollary \ref{coro:smooth}, one can conclude that if $\Psi(\cdot)\in C_{\gk}^{\infty}(\br^2/\Lambda)$, then $\bp\Psi(\cdot)$, $\vbl\Psi(\cdot)\in C_{\gk}^{\infty}(\br^2/\Lambda)$ by Sobolev embedding, and therewith also quasi-periodic functions in \eqref{id:inver}. Notice that $\{\Phi_b(\cdot;\gk)\}_{b\geqslant1}$ performs a completely orthogonal basis of $L_{\gk}^2(\br^2/\Lambda)$. Then,
\begin{align}
\nonumber	(\hsgm_{\by}-E_D)u_1^{\perp}=&~ \nabla_{\bx}\alpha_{j}\cdot  \boldsymbol{p}^{\sigma}\Phi_j -\kappa\alpha_{j}W\Phi_j-\mu\overline{\alpha_{j}}\alpha_{k}\alpha_{l}\overline{\Phi_j}\Phi_k\Phi_l \\
\nonumber	& -\big[\nabla_{\bx}\alpha_{j}\cdot \bla\Phi_i, \boldsymbol{p}^{\sigma}\Phi_j\bra_{\Omega}-\kappa\alpha_{j}\bla\Phi_i, W\Phi_j\bra_{\Omega}-\mu\overline{\alpha_{j}}\alpha_{k}\alpha_{l}\bla\Phi_{i}, \overline{\Phi_j}\Phi_k\Phi_l\bra_{\Omega}\big]\Phi_i \\
	=&~\nabla_{\bx}\alpha_{j}\cdot  \boldsymbol{p}^{\sigma}\Phi_j -\kappa\alpha_{j}W\Phi_j-\mu\overline{\alpha_{j}}\alpha_{k}\alpha_{l}\overline{\Phi_j}\Phi_k\Phi_l+\mathrm{i}\ptl_t\alpha_{j}\Phi_j.
\end{align}
The last identity holds by the fact that the results stated in \eqref{velocity}, \eqref{id:vartheta}, \eqref{id:b1b2} and $\alpha_{1,2}(t, \bx)$ satisfy the nonlinear massive Dirac equation \eqref{eq:diracalpha}. Hence that $R_1=0$.

Now, $R_2$ turns to be the leading order term,
\begin{align}\label{id:R2}
\nonumber	R_2=&~ \veps^2 \Big[-\mathrm{i}\ptl_t\beta_j\Phi_j-\nabla_{\bx}\beta_j\cdot \boldsymbol{p}^{\sigma}\Phi_j+\kappa\beta_j W\Phi_j+\mu\big(\overline{\beta_j}\alpha_k\alpha_l+2\overline{\alpha_j}\alpha_k\beta_l\big)\overline{\Phi_j}\Phi_k\Phi_l +\mathcal{C}[\alpha_j\Phi_j]\\
&~ -\text{i}\ptl_t u_{1}^{\perp} -\nabla_{\bx}\cdot\bp(u_{1}^{\perp})+\kappa W u_{1}^{\perp} +\mu \big(\overline{u_{1}^{\perp}}\alpha_j\alpha_k\Phi_j\Phi_k+2\overline{\alpha_j}\alpha_k\overline{\Phi_j}\Phi_k u_{1}^{\perp}\big)\Big].
\end{align}

While $R_3$ contains all higher order entries,
\begin{align}\label{id:R3}
\nonumber	R_3 =&~ \veps^{3} \Big[\mu\Big(\overline{\alpha_j\Phi_j}(\beta_j\Phi_j+u_1^{\perp})^2+2|\beta_j\Phi_j+u_1^{\perp}|^2 \alpha_j\Phi_j\Big)+\mathcal{C}[\beta_j\Phi_j+u_1^{\perp}]+\mathcal{D}[\alpha_j\Phi_j]\Big]\\
&+ \veps^{4}\Big[\mu|\beta_j\Phi_j+u_1^{\perp}|^2(\beta_j\Phi_j+u_1^{\perp})+ \mathcal{D}[\beta_j\Phi_j+u_1^{\perp}]\Big],
\end{align}
and $N[\varphi]$ is constituted by linear, quadratic and cubic terms of $\varphi$ and $\overline{\varphi}$,
\begin{align}\label{id:N}
N[\varphi](t, \bx) =\veps\Big[\kappa W\varphi+\mu\big(\overline{\varphi}{\psi_{\veps}}^2+2|{\psi_{\veps}}|^2\varphi+\overline{{\psi_{\veps}}}\varphi^2+2|\varphi|^2{\psi_{\veps}}+|\varphi|^2\varphi\big)\Big].
\end{align}

Moreover, the envelopes $\beta_{1,2}(t,\bx)$ admit the following linear Dirac equation system:
\begin{equation}\label{eq:beta}
\left\{
\begin{aligned}
\ptl_t\beta_1+v_F^{\sigma}(\ptl_{x_1}+\text{i}\ptl_{x_2})\beta_2+\text{i}\vartheta\kappa(\bx)\beta_1+\text{i}\mu_{1jkl}(\overline{\beta_j}\alpha_k\alpha_l+2\overline{\alpha_j}\alpha_k\beta_l)&=\sum_{\bn\in\bz^2}f_{\bn 1}[\balpha]+\Theta_1, \\
\ptl_t\beta_2+v_F^{\sigma}(\ptl_{x_1}-\text{i}\ptl_{x_2})\beta_1-\text{i}\vartheta\kappa(\bx)\beta_2+\text{i}\mu_{2jkl}(\overline{\beta_j}\alpha_k\alpha_l+2\overline{\alpha_j}\alpha_k\beta_l)&=\sum_{\bn\in\bz^2}f_{\bn 2}[\balpha]+\Theta_2 ,
\end{aligned}
\right.
\end{equation}
where the source term $f_{\bn i}[\balpha]$ are functions of $(t,\bx)$ for all $\bn\in\bz^2$, $i=1,~2$,
\begin{align}
\nonumber	f_{\bn i}[\balpha](t, \bx)&=-\mathrm{i}\int_{\br^2_{\bxi}} \tilde{C}_{\bn}(\bxi) \widehat{\alpha_j}(t, \bxi)e^{\mathrm{i}\bxi\cdot\bx}d \bxi~ \bla\Phi_i(\by),\hat{\Phi}_j(\gk-\bn\vec{\bk})e^{\text{i}(\gk-\bn\vec{\bk})\cdot\by} \bra_{\Omega} \\
	&= -\mathrm{i}|\Omega|\overline{\hat{\Phi}_i(\gk-\bn\vec{\bk})}\hat{\Phi}_j(\gk-\bn\vec{\bk}) \int_{\br^2_{\bxi}} \tilde{C}_{\bn}(\bxi) \widehat{\alpha_j}(t, \bxi)e^{\mathrm{i}\bxi\cdot\bx}d \bxi.
\end{align}
And, $\Theta_{1,2}(t,\bx)$ are defined as an inner product in $L^2_{\gk}(\br^2/\Lambda)$,
\begin{equation}
	\Theta_j(t,\bx)=\bla\Phi_j,\ptl_t u_{1}^{\perp} +\mathrm{i}\nabla_{\bx}\cdot\bp(u_{1}^{\perp})-\mathrm{i}\kappa W u_{1}^{\perp} -\mathrm{i}\mu \big(\overline{u_{1}^{\perp}}\alpha_j\alpha_k\Phi_j\Phi_k +2\overline{\alpha_j}\alpha_k\overline{\Phi_j}\Phi_k u_{1}^{\perp}\big)\bra_{\Omega}.
\end{equation}
When $\sigma=2$, one can observe $\beta_{1,2}$ satisfy the same linear Dirac equation as \cite{arbunich2018rigorous}. Notice that $f_{\bn i}[\balpha],~i=1,2$ behave as the second order derivative of $\balpha$. Using the same argument as Lemma \ref{lemma:alpha}, we also conclude the well-posedness of $\beta_{1,2}$ in the following lemma:
\begin{lemma}\label{lemma:beta}

For any integer $s>1$, assume that $\alpha_{j_0}(\bx)\in H^{s+3}(\br^2)$, $\beta_{j_0}(\bx)\in H^{s+1}(\br^2)$, $j=1,~2$ are initial values to Dirac equations \eqref{eq:diracalpha}, \eqref{eq:beta}. Let $0<T_*<\infty$ define the lifetime of $\alpha_{1,2}(t,\bx)$ by Lemma \ref{lemma:alpha}. Then, the Dirac equation \eqref{eq:beta} has a unique solution
\begin{align}\label{wellpose2}
\beta_{1,2}(t,\bx)&\in C\big([0,T_*), H^{s+1}(\br^2)\big)\cap C^1\big([0,T_*), H^{s}(\br^2)\big).
\end{align}

\end{lemma}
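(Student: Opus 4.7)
The plan is to view \eqref{eq:beta} as a \emph{linear} first-order symmetric hyperbolic system for $\beta=(\beta_1,\beta_2)^T$ whose coefficients and source are built from the already-constructed $\alpha_{1,2}$, and then invoke standard linear hyperbolic theory (e.g.\ Kato's theorem) on the lifetime $[0,T_*)$ granted by Lemma \ref{lemma:alpha}. Concretely, I would recast the system as
\begin{equation*}
\ptl_t\beta + A\nabla_{\bx}\beta + B(t,\bx)\beta + C(t,\bx)\overline{\beta}= F(t,\bx),
\end{equation*}
where $A$ is the constant $2\times 2$ Dirac-type first-order matrix operator on the LHS of \eqref{eq:beta}, $B$ absorbs $\mathrm{i}\vartheta\kappa(\bx)$ together with the $\beta$-linear cubic terms $2\mathrm{i}\mu_{ijkl}\overline{\alpha_j}\alpha_k$, and $C$ absorbs $\mathrm{i}\mu_{ijkl}\alpha_k\alpha_l$. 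The conjugation is handled by doubling the unknown to $(\beta,\overline{\beta})^T$, producing a genuinely linear symmetric hyperbolic system in four real unknowns. Using Lemma \ref{lemma:alpha} with the hypothesis $\alpha_{j_0}\in H^{s+3}$, the coefficients $B,C$ lie in $C([0,T_*),W^{s+1,\infty})$ together with a comparable bound on $\ptl_t(B,C)$, as required by the abstract theorem.

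The substantive step is then to control the source $F=\sum_{\bn\in\bz^2} f_{\bn i}[\balpha]+\Theta_i$ in $C([0,T_*),H^{s+1})$. The pointwise bound $|\tilde C_{\bn}(\bxi)|\leqslant C|\bxi|^{2}$, uniform in $\bn\in\bz^2$ (which critically uses $\sigma\leqslant 2$ so that $|\gk-\bn\vec{\bk}|^{\sigma-2}$ stays bounded as $|\bn|\to\infty$), shows that each $f_{\bn i}[\balpha]$ is a second-order constant-coefficient Fourier multiplier applied to $\alpha_j$, weighted by the product $\overline{\hat{\Phi}_i(\gk-\bn\vec{\bk})}\hat{\Phi}_j(\gk-\bn\vec{\bk})$. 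The smoothness extracted in the proof of Corollary \ref{coro:smooth} yields super-polynomial decay of those Fourier weights in $\bn$, and a Minkowski-type argument in the $\bn$-index collapses the sum into a bounded second-order operator, giving $\|\sum_{\bn} f_{\bn i}[\balpha]\|_{H^{s+1}}\lesssim \|\alpha_{1,2}\|_{H^{s+3}}$. For $\Theta_i$ I would expand $u_1^{\perp}$ using \eqref{id:inver} and the fact that $\vbl\bp\Phi_j$, $\vbl(W\Phi_j)$, $\vbl(\overline{\Phi_j}\Phi_k\Phi_l)$ all lie in $C_{\gk}^{\infty}(\br^2/\Lambda)$ (Corollary \ref{coro:smooth} once more), representing $\Theta_i$ as a bounded multilinear form in $(\alpha_{1,2},\nabla_{\bx}\alpha_{1,2},\ptl_t\alpha_{1,2},\nabla_{\bx}\ptl_t\alpha_{1,2})$; replacing $\ptl_t\alpha$ by spatial derivatives via \eqref{eq:diracalpha} then keeps the whole expression in $H^{s+1}$.

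With $F\in C([0,T_*),H^{s+1})$ in hand, I would close the argument via the usual energy method: apply $\ptl_{\bx}^{\bn}$ for $|\bn|\leqslant s+1$, exploit the symmetry of the principal part $A$, pair with $\overline{\ptl_{\bx}^{\bn}\beta}$, absorb lower-order terms using Gr\"onwall, and obtain $\|\beta(t)\|_{H^{s+1}}^2\leqslant e^{Ct}\bigl(\|\beta_0\|_{H^{s+1}}^2+\int_0^{t}\|F(\tau)\|_{H^{s+1}}^2\,d\tau\bigr)$. This extends $\beta$ on the entire interval $[0,T_*)$ and gives \eqref{wellpose2}, with the $C^1$-in-time regularity read off directly from the equation itself. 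The main technical obstacle---and the reason the lemma demands $\alpha_{j_0}\in H^{s+3}$ rather than $H^{s+1}$---is precisely the source series $\sum_{\bn}f_{\bn i}[\balpha]$: two derivatives are consumed by the second-order multipliers $\tilde C_{\bn}$, and uniformization of the sum in $\bn$ against the $\hat{\Phi}$-weights is what ultimately forces the extra regularity, together with the ceiling $\sigma\leqslant 2$ used above to prevent the $\bn$-dependent prefactors from blowing up.
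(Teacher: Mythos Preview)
Your proposal is correct and follows essentially the same approach as the paper, which simply remarks that $f_{\bn i}[\balpha]$ behave as second-order derivatives of $\balpha$ and then invokes the same standard linear symmetric-hyperbolic theory (Kato, Majda, Racke) cited for Lemma~\ref{lemma:alpha}. You have in fact supplied considerably more detail than the paper---in particular the explicit control of the source series via the super-polynomial decay of $\hat{\Phi}_j(\gk-\bn\vec{\bk})$ and the uniform-in-$\bn$ bound on $\tilde C_{\bn}$---but the overall route is identical.
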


Moreover, if $\alpha_{j_0}(\bx),~\beta_{j_0}(\bx)\in \mathcal{S}(\br^2),~j=1,~2$, it also indicates that $ \|\beta_{1,2}(t,\cdot)\|_{W^{N,1}(\br^2)}<\infty$ for $t\in[0,T_*)$, $0\leqslant N\leqslant s-1$, $N\in\mathbb{N}$.

Now, the initial condition to fNLS \eqref{eq:fnls} denotes as $\psi^{\veps}(0, \bx) ={\psi_{\veps}}_0(\bx)+\varphi_0(\bx)$ and ${\psi_{\veps}}_0(\bx)$ is given as:
\begin{equation}\label{id:solu0}
{\psi_{\veps}}_0(\bx)=\alpha_{j_0}(\bx)\Phi_j(\frac{\bx}{\veps})+\veps \big(\beta_{j_0}(\bx)\Phi_j(\frac{\bx}{\veps})+u_{10}^{\perp}(\bx)\big),
\end{equation}
where $u_{10}^{\perp}=\nabla_{\bx}\alpha_{j_0}\cdot \vbl \boldsymbol{p}^{\sigma}\Phi_j -\kappa\alpha_{j_0}\vbl \big[W\Phi_j\big]-\mu\overline{\alpha_{j_0}}\alpha_{k_0}\alpha_{l_0}\vbl\big[\overline{\Phi_j}\Phi_k\Phi_l \big]$.

We conclude the main result of second order approximation as follows:
\begin{theorem}\label{thm:main}
	Let $s>1$ be an integer, $1<\sigma\leqslant2$. $V(\cdot)\in C^{\infty}(\br^2/\Lambda)$ is a honeycomb potential. Assume that $\alpha_{j_0}(\cdot)$, $\beta_{j_0}\in \mathcal{S}(\br^2)$, $j=1,2$ are initial datum to Dirac equations \eqref{eq:diracalpha}, \eqref{eq:beta}. For $S=\max\{s+2,5\}$, $\alpha_{1,2}(t,\bx)\in C\big([0,T_{*}), H^{S+3}(\br^2)\big)\cap C^1\big([0,T_{*}), H^{S+2}(\br^2)\big)$, $\beta_{1,2}(t,\bx)\in C\big([0,T_{*}), H^{S+1}(\br^2)\big) \cap C^1\big([0,T_*), H^{S}(\br^2)\big)$ with $T_*>0$ finite. Suppose the initial condition satisfies
	\begin{equation}
		\|\psi^{\veps}(0,\bx)-\psi_{\veps0}(\bx)\|_{H_{\veps}^s}\leqslant C\veps^2.
	\end{equation}
	Then there exists $\veps_0>0$, for any $0<\veps<\veps_0$, the wave packet problem described by fNLS \eqref{eq:fnls} has a unique solution
	\begin{equation}
		\psi^{\veps}(t,\bx)\in C\big([0,T_*),~ H^s_{\veps}\big),
	\end{equation}
	and for any $0<T<T_{*}$, the approximated solution \eqref{id:asymsolu} satisfies the following estimate
	\begin{equation}
		\sup_{0\leqslant t\leqslant T}\|\psi^{\veps}(t, \bx)-\psi_{\veps}(t,\bx)\|_{H^s_{\veps}} \leqslant C\veps^2,
	\end{equation}
	where $C$ is a positive constant independent of $\veps$.
\end{theorem}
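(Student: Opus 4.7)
The plan is to set $\varphi = \psi^{\veps} - \psi_{\veps}$, exploit that the construction forces $R_1 \equiv 0$, and run an $H_{\veps}^s$ energy estimate on the resulting error equation to propagate the initial smallness $\|\varphi(0,\cdot)\|_{H_{\veps}^s} \le C\veps^2$ forward on $[0,T]\subset[0,T_*)$. Local-in-time well-posedness in $H_{\veps}^s$ is standard (since $s>1$ in two dimensions makes $H_{\veps}^s$ a Banach algebra under the scaled norm), so the crux is the uniform-in-$\veps$ energy bound.

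First I would verify the residual sizes. By Lemmas \ref{lemma:alpha} and \ref{lemma:beta} with the regularity prescription $S=\max\{s+2,5\}$, the envelopes $\alpha_{1,2}$, $\beta_{1,2}$ together with all spatial derivatives needed in \eqref{id:R2}-\eqref{id:R3} are uniformly controlled on $[0,T]$; the microscopic factors $\Phi_{1,2}$, $\bp\Phi_j$, and $u_1^{\perp}$ are smooth by Corollary \ref{coro:smooth}. Applying Proposition \ref{prop:dividelaplacian} to the fractional-Laplacian Taylor-residuals $\mathcal{C}[\,\cdot\,]$, $\mathcal{D}[\,\cdot\,]$ yields
\[
\|e^{-\mathrm{i}E_Dt/\veps}R_2\|_{H_{\veps}^s}\le C\veps^2,\qquad \|e^{-\mathrm{i}E_Dt/\veps}R_3\|_{H_{\veps}^s}\le C\veps^3,
\]
uniformly in $t\in[0,T]$ and $\veps\in(0,\veps_0]$.

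Next, I would run the energy estimate on
\[
\mathrm{i}\veps\ptl_t\varphi-\hsgm_{\bx}\varphi=e^{-\mathrm{i}E_Dt/\veps}(R_2+R_3)+N[\varphi].
\]
Testing with $\overline{\varphi}$ and taking imaginary parts eliminates the self-adjoint $\hsgm_{\bx}$-contribution. At the $H_{\veps}^s$-level, $\veps^{|\bn|}\ptl_{\bx}^{\bn}$ commutes with the fractional Laplacian (both are constant-coefficient on $\br^2$) and produces only bounded commutators with $V(\cdot/\veps)$ since $\veps\ptl_{\bx}V(\cdot/\veps)=(\ptl V)(\cdot/\veps)$ is $O(1)$. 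Combining the embedding $H_{\veps}^s\hookrightarrow L^{\infty}$ with Moser-type product inequalities and the uniform bound $\|\psi_{\veps}\|_{L^{\infty}}=O(1)$, the nonlinear term in \eqref{id:N} is controlled by
\[
\|N[\varphi]\|_{H_{\veps}^s}\le C\veps\bigl(\|\varphi\|_{H_{\veps}^s}+\|\varphi\|_{H_{\veps}^s}^2+\|\varphi\|_{H_{\veps}^s}^3\bigr).
\]
Dividing through by $\veps$ and absorbing the driving term via $\veps\|\varphi\|_{H_{\veps}^s}\le \tfrac12\veps^4+\tfrac12\|\varphi\|_{H_{\veps}^s}^2$ delivers
\[
\frac{d}{dt}\|\varphi\|_{H_{\veps}^s}^2\le C\|\varphi\|_{H_{\veps}^s}^2+C\|\varphi\|_{H_{\veps}^s}^3+C\|\varphi\|_{H_{\veps}^s}^4+C\veps^4.
\]
A Gronwall bootstrap under the hypothesis $\|\varphi\|_{H_{\veps}^s}\le K\veps$ renders the cubic and quartic corrections subdominant for $\veps$ small and improves the bound to $\|\varphi\|_{H_{\veps}^s}\le C\veps^2$, thereby closing the bootstrap on the maximal interval of existence and, together with local well-posedness, extending the solution up to $T$.

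The chief obstacle I expect is the interplay between the nonlocal operator $(-\veps^2\Delta)^{\sigma/2}$ and the two-scale structure of $\psi_{\veps}$: a naive Leibniz rule fails, so every product of a macroscopic envelope with a quasi-periodic Bloch factor must be analyzed via the splitting of Proposition \ref{prop:dividelaplacian}, and the $H_{\veps}^s$-norm estimates on $\mathcal{C}[\,\cdot\,]$ and $\mathcal{D}[\,\cdot\,]$ at every derivative order require cleanly separating microscopic frequencies (size $1/\veps$) from macroscopic ones (size $1$). This is exactly the technical engine behind the \emph{micro-scaled Bloch decomposition} alluded to in Section \ref{sec:proof}, which reformulates the error equation on the scaled variable $\by=\bx/\veps$ so that the fractional Laplacian acts diagonally on the Floquet-Bloch side and the energy inequality becomes transparent.
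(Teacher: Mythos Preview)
There is a genuine gap in the energy estimate for the $R_2$ contribution. From
\[
\mathrm{i}\veps\ptl_t\varphi-\hsgm_{\bx}\varphi=e^{-\mathrm{i}E_Dt/\veps}(R_2+R_3)+N[\varphi],
\]
testing with $\overline{\varphi}$ and dividing by $\veps$ gives a driving term of size $\veps^{-1}\|R_2\|_{H_{\veps}^s}\,\|\varphi\|_{H_{\veps}^s}\sim \veps\,\|\varphi\|_{H_{\veps}^s}$. Your inequality $\veps\|\varphi\|_{H_{\veps}^s}\le \tfrac12\veps^4+\tfrac12\|\varphi\|_{H_{\veps}^s}^2$ is false; Young's inequality only yields $\veps\|\varphi\|_{H_{\veps}^s}\le \tfrac12\veps^2+\tfrac12\|\varphi\|_{H_{\veps}^s}^2$. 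Hence the correct energy inequality is
\[
\frac{d}{dt}\|\varphi\|_{H_{\veps}^s}^2\le C\|\varphi\|_{H_{\veps}^s}^2+C\veps^{2}+\text{(higher order)},
\]
and Gronwall delivers only $\|\varphi\|_{H_{\veps}^s}\le C\veps$. That is exactly the content of Theorem~\ref{thm:leading}, not Theorem~\ref{thm:main}; the bootstrap cannot upgrade $\mathcal{O}(\veps)$ to $\mathcal{O}(\veps^2)$ because the obstruction is the linear-in-$\veps$ source, not the nonlinear corrections.

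The paper avoids this loss by \emph{not} using a direct energy estimate on $R_2$. Instead it writes the Duhamel integral $G_2(t,\bx)=-\tfrac{\mathrm{i}}{\veps}\int_0^t e^{\mathrm{i}(\hsgm_{\bx}-E_D)s/\veps}e^{-\mathrm{i}\hsgm_{\bx}t/\veps}R_2\,ds$ and estimates it band by band in the Floquet--Bloch decomposition. For bands $b\notin\{\pm\}$ (and for $b=\pm$ away from $\gk$), the oscillatory factor $e^{\mathrm{i}(E_b(\bk)-E_D)s/\veps}$ has a nonvanishing phase, and a single integration by parts in $s$ recovers the missing factor of $\veps$. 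For the Dirac bands $b=\pm$ with $\bk$ near $\gk$, no phase is available; instead the paper uses the expansion \eqref{eigenfunc:expan} and the fact that $\beta_{1,2}$ solve \eqref{eq:beta} to show that the projection $\langle\Phi_{\pm}(\,\cdot\,;\bk),R_2\rangle$ vanishes to leading order, leaving a residual of size $\mathcal{O}(|\bk-\gk|)$ which compensates for the small-$|\bk-\gk|$ region. This is the actual role of the Bloch decomposition in Section~\ref{sec:proof}; it is not merely a device for controlling $\mathcal{C}[\,\cdot\,]$ and $\mathcal{D}[\,\cdot\,]$, and without it the $\mathcal{O}(\veps^2)$ conclusion is out of reach.
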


\begin{proof}

Using the Duhamel's principle, we rewrite the error evolution in \eqref{eq:varphi} as an integral equation, i.e.,
\begin{align}\label{id:varphi}
\nonumber	\varphi(t, \bx) &= e^{-\mathrm{i}\hsgm_{\bx}\frac{t}{\veps}} \varphi_0(\bx)  -\frac{\mathrm{i}}{\veps}\int_{0}^{t}e^{\mathrm{i}(\hsgm_{\bx}-E_D)\frac{s}{\veps}} e^{-\mathrm{i}\hsgm_{\bx}\frac{t}{\veps}}(R_2+R_3)(s, \bx)+e^{\mathrm{i}\hsgm_{\bx}\frac{s-t}{\veps}}N[\varphi](s, \bx)ds \\
	&:= G_0(t, \bx)+G_2(t, \bx)+G_3(t, \bx)+Q[\varphi](t, \bx).
\end{align}

We prove the above proposition by first investigating the estimate of $Q[\varphi](t, \bx)$. By the relationship stated in \eqref{id:weightnorm}, it is easy to show the following conclusion since the standard Sobolev space $H^s(\br^2)$ is a Banach algebra when $s>1$. Then,
\begin{align}
\nonumber	\|N[\varphi](t, \bx)\|_{H_{\veps}^s} \leqslant&~C \veps\|\varphi\|_{H_{\veps}^s}\big(\|\kappa W\|_{W_{\veps}^{s,\infty}}+\|\alpha_j\Phi_j\|^2_{W_{\veps}^{s,\infty}}+\|\beta_j\Phi_j+u_1^{\perp}\|^2_{H_{\veps}^{s}}\big) \\
\nonumber	&+ C\|\varphi\|_{H_{\veps}^s}^2\big(\|\alpha_j\Phi_j\|_{W_{\veps}^{s,\infty}}+\|\beta_j\Phi_j+u_1^{\perp}\|_{H_{\veps}^{s}}\big) +C\frac{\|\varphi\|_{H_{\veps}^s}^3}{\veps}\\
\leqslant&~C\veps\big(\|\varphi\|_{H_{\veps}^s}+\frac{\|\varphi\|_{H_{\veps}^s}^3}{\veps^2}\big).
\end{align}
The last inequality we employ the Cauchy-Schwartz.

Notice that $\hsgm_{\bx}$ is a self-adjoint operator. The linear fractional Schr\"odinger group $e^{\mathrm{i}\hsgm_{\bx}\frac{t}{\veps}}$ is unitary and commutes with $\hsgm_{\bx}$, i.e., for any $f\in H^s_{\veps}$, it gives that
\begin{align}
\|e^{\mathrm{i}\hsgm_{\bx}\frac{t}{\veps}}f(\bx)\|_{L^2(\br^2)}= \|f(\bx)\|_{L^2(\br^2)},
\end{align}
and
\begin{equation}\label{id:normequiv}
	\|e^{\mathrm{i}\frac{t}{\veps}\hsgm_{\bx}}f(\bx)\|_{H^s_{\veps}} \approx \|e^{\mathrm{i}\hsgm_{\bx}\frac{t}{\veps}}f(\bx)\|_{L^2(\br^2)}+\|(\hsgm_{\bx})^{\frac{s}{\sigma}} e^{\mathrm{i}\hsgm_{\bx}\frac{t}{\veps}}f(\bx)\|_{L^2(\br^2)} \approx \|f(\bx)\|_{H^s_{\veps}}.
\end{equation}

Thus, we can deduce the estimate of $Q[\varphi](t, \bx)$,
\begin{align}\label{id:qvarphi}
	\|Q[\varphi](t, \bx)\|_{H_{\veps}^s}=\|\frac{1}{\veps}\int_0^t e^{\mathrm{i}\hsgm_{\bx}\frac{s-t}{\veps}} N[\varphi](s, \bx)ds\|_{H_{\veps}^s} \leqslant C \int_0^t \|\varphi\|_{H_{\veps}^s}+\frac{1}{\veps^2}\|\varphi\|_{H_{\veps}^s}^3 ds
\end{align}

For $t\in[0,T]$, assume the following conclusion for $G_{0,2,3}(t,\bx)$ in \eqref{id:varphi} holds.
\begin{proposition}\label{prop:g2g3}
	Let $s>1$ and $0<T<T_*$, the following estimates of $G_{j}(t,\bx),~j=0,2,3$ defined in \eqref{id:varphi} holds	
	\begin{equation}
	\sup_{0\leqslant t\leqslant +\infty}\|G_0(t, \bx)\|_{H_{\veps}^s}\leqslant C\veps^{2},\quad
	\sup_{0\leqslant t\leqslant T}\|G_j(t, \bx)\|_{H_{\veps}^s}\leqslant C\veps^2,\quad j= 2,~3.
	\end{equation}

\end{proposition}
The detailed proof of above will be postponed in next section by a refined Bloch spectral decomposition arguments. Thus, we can conclude that for any $s>1$, $t\in[0,T]$,
\begin{align}\label{id:etanorm}
	\|\varphi(t, \bx)\|_{H_{\veps}^s} \leqslant C_1\veps^2 +C_2 \int_0^t \|\varphi\|_{H_{\veps}^s}+\frac{1}{\veps^2}\|\varphi\|_{H_{\veps}^s}^3 ds:=\veps \omega(t).
\end{align}

We next employ the nonlinear Gronwall's inequality \cite{yong1999singular} to derive the estimate of final result. Taking a derivative of $\omega(t)$ yields that
\begin{align}
\omega'(t)=C_2\big(\frac{\|\varphi(t,\bx)\|_{H_{\veps}^s}}{\veps}+\frac{\|\varphi(t, \bx)\|^3_{H_{\veps}^s}}{\veps^3}\big) \leqslant C_2\omega(t)+C_2 \omega^3(t),
\end{align}
and $\omega(0)=C_1\veps$. Then by multiplying $\exp(2C_2 t)\omega^{^{-3}}(t)$ on both sides and integrating from $0$ to $t$, it follows that
\begin{equation}\label{inequal:gronwall}
\exp(2C_2t)\omega^{^{-2}}(t) \geqslant \omega^{^{-2}}(0)-\big(\exp(2C_2t)-1\big).
\end{equation}

Thus, there exists $\veps_0>0$ sufficiently small, for all $t\in[0,T]$, $0<\veps<\veps_0$, one can deduce
\begin{align}
\omega^2(t)\leqslant 2 \exp(2C_2t)\omega^2(0)\leqslant 2C_1^2\exp(2C_2 T)\veps^2.
\end{align}

Consequently we acquire
\begin{align}
	\sup_{t\in[0,T]}\|\varphi(t, \bx)\|_{H_{\veps}^s}\leqslant C\veps^2.
\end{align}
\end{proof}

\begin{remark}
	The leading order term approximation theory in Theorem \ref{thm:leading} can be also derived in this strategy. In such setup, it is evident to see that $Q[\varphi]$ still evolves in the same way as \eqref{id:qvarphi}, and the estimate of explicit residuals as Proposition \ref{prop:g2g3} will reduce into order $\mathcal{O}(\veps)$. Consequently, the nonlinear Gronwall inequality is only applicable in a short lifetime in \eqref{inequal:gronwall}, since $w(0)$ will be of $\mathcal{O}(1)$.
\end{remark}

\section{Error Estimate of Explicit Parts}\label{sec:proof}

In this section, we will focus on the derivation in Proposition \ref{prop:g2g3}. The estimate to $G_0(t, \bx)$ is straightforward. We first establish the demonstration of $G_3(t, \bx)$.

\subsection{Estimate of $G_3$}

For any $t\in[0,T]$, recalling the higher order residual $R_3$ stated in \eqref{id:R3}, then,
\begin{align}
G_3(t, \bx)= -\frac{\mathrm{i}}{\veps}\int_0^t e^{\mathrm{i}(\hsgm-E_D)\frac{s}{\veps}}e^{-\mathrm{i}\hsgm_{\bx}\frac{t}{\veps}}R_3(s, \bx) ds
\end{align}

Notice that $\kappa(\bx)$ is smooth bounded on $\br^2$, $W(\cdot)\in C^{\infty}(\br^2/\Lambda)$, $\Phi_j(\cdot)\in C^{\infty}_{\gk}(\br^2/\Lambda)$ and  $\alpha_{1,2}(t,\cdot)\in C\big([0,T], H^{s+5}(\br^3)\big)$, $\beta_{1,2}(t,\cdot)\in C\big([0,T], H^{s+3}(\br^3)\big)$. Note the estimate \eqref{id:qsgm} given in Proposition \ref{prop:dividelaplacian}, for any $s>1$, $0\leqslant t\leqslant T$, we can deduce that
\begin{align}
\|R_3(t, \bx)\|_{H_{\veps}^s}\leqslant C\veps^{3},
\end{align}
and consequently we arrive at
\begin{align}
\sup_{0\leqslant t\leqslant T}\|G_3(t, \bx)\|_{H_{\veps}^s}\leqslant C\veps^{2}.
\end{align}

\subsection{Estimate of $G_2$}

Now, we turn to the key estimate of $G_2(t, \bx)$. Although, the coefficient order here is lower than that of $G_3(t, \bx)$, we shall modify the procedure of subtle Bloch spectral decomposition presented in \cite{fefferman2014wave,xie2019wave}, and improve the estimate of $G_2(t, \bx)$ up to order of exactly $\veps^2$ for $t\in[0,T]$.

In upcoming justifications, we will treat the error estimate by rescaling from macroscopic variable $\bx$ to microscopic variable $\by$. Recalling the equivalent relation of weighted/standard Sobolev space in \eqref{id:weightnorm} and let $\bx=\veps\by$, then we denote
\begin{equation}\label{id:rescaling1}
	\mathcal{G}(t,\by)=\veps G_2(t,\veps\by).
\end{equation}

By the Floquet-Bloch theory in Section \ref{sec:preli}, $\{\Phi_b(\by;\bk)\}_{b\geqslant 1,\bk\in\Omega^*}$ is complete in $L^2(\br^2)$. We claim the following expansion holds,
\begin{align}
	\mathcal{G}(t, \by)= \frac{1}{|\Omega^*|}\sum_{b\geqslant 1} \int_{\bk\in \Omega^*} \widetilde{\mathcal{G}}_b(t, \bk)\Phi_b(\by;\bk)d\bk,
\end{align}
where each component denotes as
\begin{align}
	\widetilde{\mathcal{G}}_b(t, \bk)=\bla \Phi_b(\by;\bk), \mathcal{G}(t, \by)\bra =-\mathrm{i}\int_0^t e^{\mathrm{i}(E_b(\bk)-E_D)\frac{s}{\veps}}e^{-\mathrm{i}E_b(\bk)\frac{t}{\veps}} \bla\Phi_b(\by;\bk), R_2(s, \veps\by)\bra ds,
\end{align}

Then, we separate $\mathcal{G}$ into two parts: $\mathcal{G}_\mathrm{D}$, the frequency components lie in the two spectral bands $(\bk, E_-(\bk))$ and $(\bk, E_+(\bk))$ conically intersecting at the Dirac point $(\gk, E_D)$, while $\mathcal{G}_\mathrm{D^c}$ indicates the frequency components lying in all the other spectral bands:
\begin{align}
\nonumber	\mathcal{G}(t, \by)=&~ \frac{1}{|\Omega^*|}\sum_{b\in\{\pm\}} \int_{\bk\in \Omega^*} \widetilde{\mathcal{G}}_b(t, \bk)\Phi_b(\by;\bk)d\bk+\frac{1}{|\Omega^*|}\sum_{b\notin\{\pm\}} \int_{\bk\in \Omega^*} \widetilde{\mathcal{G}}_b(t, \bk)\Phi_b(\by;\bk)d\bk  \\
:=&~ \mathcal{G}_\mathrm{D}(t, \by)+\mathcal{G}_{\mathrm{D^c}}(t, \by).
\end{align}
Furthermore, we need divide the conjugated area $\Omega^*$ of frequencies into ``near", ``middle" and ``far away from" $\gk$. To this end, we will use the indicator function $\chi(\cdot)$ defined below
\begin{equation}
\chi(\bk\in D):=\left\{
\begin{aligned}
&1,\quad\text{if}~\bk\in D, \\
&0,\quad\text{otherwise}.
\end{aligned}
\right.
\end{equation}
We are now going to decompose ${\mathcal{G}}_{\mathrm{D}}$ into 3 parts as follows:
\begin{align}
\nonumber	{\mathcal{G}}_\mathrm{D}(t, \by)=&~ \frac{1}{|\Omega^*|} \sum_{b\in\{\pm\}}\int_{\Omega^*}\Big[\chi(|\bk-\gk|<\veps)+\chi(\veps\leqslant |\bk-\gk|< q_0) \\
\nonumber	&\quad  +\chi(|\bk-\gk|\geqslant q_0)\Big]\widetilde{\mathcal{G}}_b(t, \bk)\Phi_b(\by;\bk) d\bk \\
:=&~{\mathcal{G}}_{\mathrm{D,I}}(t, \by)+{\mathcal{G}}_{\mathrm{D,II}}(t, \by)+{\mathcal{G}}_{\mathrm{D,III}}(t, \by).
\end{align}
While ${\mathcal{G}}_{\mathrm{D^c}}$ is composed of ${\mathcal{G}}_{\mathrm{D^c,I}}$ and ${\mathcal{G}}_{\mathrm{D^c,II}}$:
\begin{align}
\nonumber {\mathcal{G}}_{\mathrm{D^c,I}}(t, \by)=&~\frac{1}{|\Omega^*|}\sum_{b\in\{1,\cdots,b_r\}\setminus\{\pm\}}\int_{\Omega^*}\chi(|\bk-\gk|< q_1)\widetilde{\mathcal{G}}_b(t, \bk)\Phi_b(\by;\bk) d\bk  \\
	&~ +\frac{1}{|\Omega^*|}\sum_{b\geqslant b_r+1} \int_{\Omega^*}\widetilde{\mathcal{G}}_b(t, \bk)\Phi_b(\by;\bk) d\bk,\\
	{\mathcal{G}}_{\mathrm{D^c,II}}(t, \by)=&~\frac{1}{|\Omega^*|}\sum_{b\in\{1,\cdots,b_r\}\setminus\{\pm\}}\int_{\Omega^*}\chi(|\bk-\gk|\geqslant q_1)\widetilde{\mathcal{G}}_b(t, \bk)\Phi_b(\by;\bk) d\bk.
\end{align}
Here $q_0$ is defined in Section 3 to ensure \eqref{eigenfunc:expan} holds, and we choose $b_r\geqslant +$ and $q_1>0$ in Corollary \ref{corollary:br} such that $|E_b(\bk)-E_D|$ has a positive lower bound.

Due to the fact \eqref{id:hsnorm} and each $E_b(\bk)$ is Lipschitz continuous, for any $s\geqslant 0$, one can conclude
\begin{align}
\nonumber	\|\mathcal{G}(t, \by)\|^2_{H^s(\br^2)} &\approx \frac{1}{|\Omega^*|}\sum_{b\geqslant1}\int_{\Omega^*}(1+E_b(\bk))^{\frac{2s}{\sigma}}|\widetilde{\mathcal{G}}_b(t, \bk)|^2 d\bk \\
\nonumber	&\approx \|{\mathcal{G}}_{\mathrm{D}}(t, \by)\|^2_{L^2(\br^2)}+\|{\mathcal{G}}_{\mathrm{D^c}}(t, \by)\|^2_{H^s(\br^2)}\\
\nonumber	&\approx  \|{\mathcal{G}}_{\mathrm{D,I}}(t, \by)\|^2_{L^2(\br^2)}+\|{\mathcal{G}}_{\mathrm{D,II}}(t, \by)\|^2_{L^2(\br^2)}+\|{\mathcal{G}}_{\mathrm{D,III}}(t, \by)\|^2_{L^2(\br^2)} \\
&\quad +\|{\mathcal{G}}_{\mathrm{D^c,I}}(t, \by)\|^2_{H^s(\br^2)}+\|{\mathcal{G}}_{\mathrm{D^c,II}}(t, \by)\|^2_{L^2(\br^2)}.
\end{align}

In the following investigation, we will frequently employ Possion-Summation formula, and the proof has been given by \cite{fefferman2014wave}. We claim that for any $\Gamma(\cdot)$ algebraic decay at infinity and $\Psi(\cdot)\in C_{\gk}^{\infty}(\br^2/\Lambda)$, the inner product projects onto each eigenfunction could be decomposed into an infinite sequence summation on $L^2(\Omega)$, i.e.,
\begin{equation}
\bla \Phi_b(\by; \bk), \Gamma(\veps \by)\Psi(\by)\bra=\frac{1}{\veps^2}\frac{1}{|\Omega|}\sum_{\bm\in\bz^2}\widehat{\Gamma}(\frac{\bm\vec{\bk}+\bk-\gk}{\veps})\cdot \int_{\Omega}e^{\mathrm{i}(\bm\vec{\bk}+\bk-\gk)\cdot\by}\overline{\Phi_b(\by;\bk)}\Psi(\by)d\by,
\end{equation}
and if $\bxi\neq\boldsymbol{0}$, $\forall~N\geqslant 0$, we use integration by parts to get an upper bound
\begin{equation}\label{poisson:invbound}
|\widehat{\Gamma}(\bxi)|\leqslant C\frac{1}{|\bxi|^N}\|\Gamma(\cdot)\|_{W^{N,1}(\br^2)}<+\infty.
\end{equation}

Noticing the components in $R_2$, we formally denote $\mathbf{\Gamma}_{\ell}$ and $\mathbf{\Psi}_{\ell}$, $\ell=1,\cdots,6,\bn$ as follows:
\begin{eqnarray}
\nonumber		&\mathbf{\Gamma}_1=(-\mathrm{i}\ptl_t\beta_j,~-\nabla_{\bx}\beta_j,~\kappa\beta_j,~\mu(\overline{\beta_j}\alpha_k\alpha_l+2\overline{\alpha_j}\alpha_k\beta_l)),~ \mathbf{\Psi}_1=(\Phi_j,~\bp\Phi_j,~W\Phi_j,~\overline{\Phi_j}\Phi_k\Phi_l) &\\
\nonumber	&\mathbf{\Gamma}_2=-\mathrm{i}\ptl_t(\nabla_{\bx}\alpha_j,~ -\kappa\alpha_{j},~-\mu\overline{\alpha_{j}}\alpha_{k}\alpha_{l}),~ \mathbf{\Psi}_2=(\vbl\bp\Phi_j,~\vbl\big[W\Phi_j\big],~\vbl\big[\overline{\Phi_j}\Phi_k\Phi_l\big]) &\\
\nonumber	&\mathbf{\Gamma}_3=-\nabla_{\bx}(\nabla_{\bx}\alpha_j,~ -\kappa\alpha_{j},~-\mu\overline{\alpha_{j}}\alpha_{k}\alpha_{l}),~\mathbf{\Psi}_3=\bp(\vbl\bp\Phi_j,~\vbl\big[W\Phi_j\big],~\vbl\big[\overline{\Phi_j}\Phi_k\Phi_l\big])& \\
\nonumber	&\mathbf{\Gamma}_4=\kappa(\nabla_{\bx}\alpha_j,~ -\kappa\alpha_{j},~-\mu\overline{\alpha_{j}}\alpha_{k}\alpha_{l}),~\mathbf{\Psi}_4=W(\vbl\bp\Phi_j,~\vbl\big[W\Phi_j\big],~\vbl\big[\overline{\Phi_j}\Phi_k\Phi_l\big]) &\\
\nonumber	&\mathbf{\Gamma}_5=\mu\overline{(\nabla_{\bx}\alpha_j,~ -\kappa\alpha_{j},~-\mu\overline{\alpha_{j}}\alpha_{k}\alpha_{l})}\alpha_j\alpha_k,~\mathbf{\Psi}_5=\overline{(\vbl\bp\Phi_j,~\vbl\big[W\Phi_j\big],~\vbl\big[\overline{\Phi_j}\Phi_k\Phi_l\big])}\Phi_j\Phi_k &\\
\nonumber	&\mathbf{\Gamma}_6=2\overline{\alpha_j}\alpha_k(\nabla_{\bx}\alpha_j,~ -\kappa\alpha_{j},~-\mu\overline{\alpha_{j}}\alpha_{k}\alpha_{l}),~\mathbf{\Psi}_6=\overline{\Phi_j}\Phi_k(\vbl\bp\Phi_j,~\vbl\big[W\Phi_j\big],~\vbl\big[\overline{\Phi_j}\Phi_k\Phi_l\big])& \\
	&\mathbf{\Gamma}_{\bn}=\int_{\br^2_{\bzeta}}\tilde{C}_{\bn}(\bzeta) \widehat{\alpha_j}(t, \bzeta)e^{\mathrm{i}\bzeta\cdot\veps\by}d \bzeta,~\mathbf{\Psi}_{\bn}=\hat{\Phi}_j(\gk-\bn\vec{\bk})e^{\text{i}(\gk-\bn\vec{\bk})\cdot\by}.&
\end{eqnarray}
Moreover, thanks to the decay estimate of $\hat{\Phi}_{1,2}(\gk-\bn\vec{\bk})$ as $|\bn|\rightarrow +\infty$, it gives rise to a uniform summation convergence for all $\bm\in\bz^2$,
\begin{align}\label{id:poissonbound}
\nonumber	&\sum_{\bn\in\bz^2}\bav\widehat{\mathbf{\Gamma}}_{\bn}(t, \frac{\bm\vec{\bk}+\bk-\gk}{\veps})\int_{\Omega}e^{\mathrm{i}(\bm\vec{\bk}+\bk-\gk)\cdot\by}\overline{\Phi_b(\by;\bk)}\mathbf{\Psi}_{\bn}(\by)d\by\bav \\
\nonumber	\leqslant&~C\sum_{\bn\in\bz^2}\bav\tilde{C}_{\bn}(\frac{\bm\vec{\bk}+\bk-\gk}{\veps})\widehat{\alpha_j}(t, \frac{\bm\vec{\bk}+\bk-\gk}{\veps})\bav \bav\hat{\Phi}_j(\gk-\bn\vec{\bk})\bav \|e^{\mathrm{i}(\bm\vec{\bk}-\bn\vec{\bk})\cdot\by}\|_{L^2(\Omega)} \\
\leqslant&~C\bav\frac{\bm\vec{\bk}+\bk-\gk}{\veps}\bav^2\bav\widehat{\balpha}(t, \frac{\bm\vec{\bk}+\bk-\gk}{\veps})\bav.
\end{align}

We are now in a position to derive the estimate of ${\mathcal{G}}_{\mathrm{D}}(t, \by)$. For any $t\in [0,T]$, $b\in\{\pm\}$, we rewrite the Poisson-Summation sequence in view of $\bm=\boldsymbol{0}$ and $\bm\neq\boldsymbol{0}$.
\begin{align}
\nonumber	\bla\Phi_{\pm}(\by;\bk), R_2(t, \veps\by)\bra =&~\frac{1}{|\Omega|}\sum_{\bm\in\bz^2}\widehat{\mathbf{\Gamma}}_{\ell}(t, \frac{\bm\vec{\bk}+\bk-\gk}{\veps})\cdot\int_{\Omega}e^{\mathrm{i}(\bm\vec{\bk}+\bk-\gk)\cdot\by}\overline{\Phi_{\pm}(\by;\bk)}\mathbf{\Psi}_{\ell}(\by)d\by \\
=&~\frac{1}{|\Omega|}\Big(\sum_{\bm=\boldsymbol{0}}\cdot+\sum_{\bm\neq\boldsymbol{0}}\cdot\Big) :=\frac{1}{|\Omega|}\big(\textrm{I}_1(t,\bk)+\textrm{I}_2(t,\bk)\big).
\end{align}

If $\bm\neq \boldsymbol{0}$ and $|\bk-\gk|< q_0$, one can observe there exists a constant $C>0$ such that
\begin{equation}
|\bm\vec{\bk}+\bk-\gk|\geqslant C(1+|\bm|).
\end{equation}
We choose the integer $N>2$ in \eqref{poisson:invbound} to obtain
\begin{align}
\bav\chi(|\bk-\gk|< q_0)\textrm{I}_2\bav\leqslant C\veps^N.
\end{align}

Next, if $\bm=\boldsymbol{0}$, $|\bk-\gk|< q_0$, recalling \eqref{eigenfunc:expan} proposed in Section 3,  $\Phi_{\pm}(\by;\bk)$ satisfies the following expansion when $0<|\bk-\gk|< q_0$
\begin{equation}\label{id:eigenfexpan}
\Phi_{\pm}(\by;\bk)= \frac{e^{\mathrm{i}\bkappa\cdot\by}}{\sqrt{2}}\Big[\frac{\kappa_1+\mathrm{i}\kappa_2}{|\bkappa|}\Phi_1(\by)\pm\Phi_2(\by)+\mathcal{O}_{H_{\gk}^{\sigma}(\br^2/\Lambda)}(|\bkappa|)\Big],~\text{with}~\bkappa=\bk-\gk.
\end{equation}
Substituting above formula into $\textrm{I}_1$, it follows that
\begin{equation}
	\textrm{I}_1=\frac{\kappa_1+\mathrm{i}\kappa_2}{\sqrt{2}|\bkappa|}\widehat{\mathbf{\Gamma}}_{\ell}(t,\frac{\bkappa}{\veps})\cdot\bla\Phi_1(\by), \mathbf{\Psi}_{\ell}(\by)\bra_{\Omega} \pm \frac{1}{\sqrt{2}~}\widehat{\mathbf{\Gamma}}_{\ell}(t,\frac{\bkappa}{\veps})\cdot\bla\Phi_2(\by), \mathbf{\Psi}_{\ell}(\by)\bra_{\Omega}  +\mathfrak{R}(t,\frac{\bkappa}{\veps})\mathcal{O}(|\bkappa|),
\end{equation}
where $\mathfrak{R}(t,\frac{\bkappa}{\veps})\mathcal{O}(|\bkappa|)$ is the remainder.

According to the facts stated in \eqref{id:phi1pphi2}\eqref{id:vartheta}\eqref{id:b1b2}, all non-zero terms indicate that
\begin{align}
\nonumber	&\quad\widehat{\mathbf{\Gamma}}_{\ell}(t,\frac{\bkappa}{\veps})\cdot\bla\Phi_1(\by), \mathbf{\Psi}_{\ell}(\by)\bra_{\Omega}\\
\nonumber	=& -\mathrm{i}\widehat{\ptl_t\beta_1}(t,\frac{\bkappa}{\veps})-v_F^{\sigma}\big(\textrm{i}\widehat{\ptl_{x_1}\beta_2}(t,\frac{\bkappa}{\veps})-\widehat{\ptl_{x_2}\beta_2}(t,\frac{\bkappa}{\veps})\big)+\vartheta\widehat{\kappa\beta_1}(t,\frac{\bkappa}{\veps}) \\
	&+\mu_{1jkl}\big(2\widehat{\overline{\alpha_j}\alpha_k\beta_l}(t,\frac{\bkappa}{\veps})+\widehat{\overline{\beta_j}\alpha_k\alpha_l}(t,\frac{\bkappa}{\veps})\big)
+\mathrm{i}\sum_{\bn\in\bz^2}\widehat{f_{\bn}(\alpha_j)}(t,\frac{\bkappa}{\veps})g_{\bn 1}(\Phi_j)+\mathrm{i}\widehat{\Theta_1}(t,\frac{\bkappa}{\veps}),
\end{align}
and
\begin{align}
\nonumber	&\quad\widehat{\mathbf{\Gamma}}_{\ell}(t,\frac{\bkappa}{\veps})\cdot\bla\Phi_2(\by), \mathbf{\Psi}_{\ell}(\by)\bra_{\Omega} \\
\nonumber	=& -\mathrm{i}\widehat{\ptl_t\beta_2}(t,\frac{\bkappa}{\veps})-v_F^{\sigma}\big(\textrm{i}\widehat{\ptl_{x_1}\beta_2}(t,\frac{\bkappa}{\veps})+\widehat{\ptl_{x_2}\beta_2}(t,\frac{\bkappa}{\veps})\big)-\vartheta\widehat{\kappa\beta_2}(t,\frac{\bkappa}{\veps}) \\
	&+\mu_{2jkl}\big(2\widehat{\overline{\alpha_j}\alpha_k\beta_l}(t,\frac{\bkappa}{\veps})+\widehat{\overline{\beta_j}\alpha_k\alpha_l}(t,\frac{\bkappa}{\veps})\big)
+\mathrm{i}\sum_{\bn\in\bz^2}\widehat{f_{\bn}(\alpha_j)}(t,\frac{\bkappa}{\veps})g_{\bn 2}(\Phi_j)+\mathrm{i}\widehat{\Theta_2}(t,\frac{\bkappa}{\veps}) .
\end{align}
Since the Dirac equation \eqref{eq:beta} holds, we can see that both parts vanish on above. Then only the residual term $\mathfrak{R}(t,\frac{\bkappa}{\veps})\mathcal{O}(|\bkappa|)$ is left in $\textrm{I}_1$.

Notice that all $\widehat{\mathbf{\Gamma}}_{\ell}(t,\cdot)~(0\leqslant t\leqslant T)$ are bounded by Lemma \ref{lemma:alpha}, \ref{lemma:beta}. Then,
\begin{equation}
	\bav\chi(|\bk-\gk|<\veps)\textrm{I}_1(t,\bk)\bav=\bav\chi(|\bk-\gk|<\veps)\mathfrak{R}(t, \frac{\bk-\gk}{\veps})\mathcal{O}(|\bk-\gk|)\bav\leqslant C\veps.
\end{equation}
Thus, for any $t\in[0,T]$ we arrive at
\begin{align}
\nonumber	& \|\mathcal{G}_{\mathrm{D,I}}(t, \by)\|^2_{L^2(\br^2)} \\
\nonumber	\leqslant&~ C\sum_{b\in\{\pm\}} \int_{\Omega^*}\chi(0<|\bk-\gk|<\veps) \bav\int_0^t e^{\mathrm{i}(E_b(\bk)-E_D)\frac{s}{\veps}}e^{-\mathrm{i}E_b(\bk)\frac{t}{\veps}}\bla \Phi_b(\by;\bk), R_2(s, \veps\by)\bra ds\bav^2 d\bk \\
\nonumber	\leqslant&~ C\sum_{b\in\{\pm\}} \int_{\Omega^*}\chi(0<|\bk-\gk|<\veps)\bav\int_0^t \textrm{I}_1(s,\bk)+\textrm{I}_2(s,\bk)ds\bav^2 d\bk \\
\nonumber	\leqslant&~ C\sum_{b\in\{\pm\}} \int_{\Omega^*}\chi(0<|\bk-\gk|<\veps)\bav\int_0^t \veps+\veps^N ds\bav^2 d\bk \\
\leqslant&~ C \veps^4.
\end{align}

Whereas $\veps\leqslant|\bk-\gk|< q_0$, it leads to the following property of $\textrm{I}_1$ by recalling \eqref{poisson:invbound},
\begin{align}
	\Big|\chi(\veps\leqslant |\bk-\gk|< q_0)\textrm{I}_1(t,\bk)\Big|\leqslant \bav\mathfrak{R}(t, \frac{\bk-\gk}{\veps})\bav\mathcal{O}(|\bk-\gk|) \leqslant  C\frac{\veps^N~}{~|\bk-\gk|^{N-1}}.
\end{align}
Therefore, when $t\in[0,T]$ and $\veps\leqslant|\bk-\gk|< q_0$, one can obtain
\begin{align}
\nonumber	\|\mathcal{G}_{\mathrm{D,II}}(t, \by)\|^2_{L^2(\br^2)} \leqslant&~ C\int_{\Omega^*}\chi(\veps\leqslant |\bk-\gk|< q_0)\Big|\int_0^t \textrm{I}_1(s,\bk)+\textrm{I}_2(s,\bk)ds\Big|^2d\bk \\
\nonumber	\leqslant&~ C  \int_{\Omega^*}\chi(\veps\leqslant |\bk-\gk|< q_0) \frac{\veps^{2N}}{|\bk-\gk|^{2(N-1)}}d\bk \\
\nonumber	\leqslant&~ C \int_{\veps}^{q_0} \frac{\veps^{2N}}{r^{2(N-1)}}rdr = C \veps^{2N}\frac{1}{4-2N}r^{4-2N}\Big|_{\veps}^{q_0} \\
\leqslant&~ C\veps^{4}.
\end{align}
Here we choose $N>2$ to guarantee the boundedness.

Next we give the estimate of $\mathcal{G}_{\mathrm{D,III}}$. If $\bk\in\Omega^*$ and $|\bk-\gk|> q_0$, it is easy to show
\begin{equation}
|\bm\vec{\bk}+\bk-\gk|\geqslant C(1+|\bm|),\quad\forall \bm\in\bz^2.
\end{equation}
Then by invoking the Poisson-Summation formula, for any $t\in[0,T]$ we can deduce
\begin{align}
\chi(|\bk-\gk|> q_0)\big|\bla\Phi_{\pm}(\by;\bk), R_2(t, \veps\by)\bra\big|\leqslant C \sum_{\bm\in\bz^2}\frac{\veps^N}{(1+|\bm|)^N}.
\end{align}
Let $N>2$, we have the following bound as $t\in[0,T]$,
\begin{equation}
\|\mathcal{G}_{\mathrm{D,III}}(t, \by)\|^2_{L^2(\br^2)}\leqslant C \veps^4.
\end{equation}

$\newline$

Now, we turn to derive $\mathcal{G}_{\mathrm{D^c,I}}(t, \by)$, we first recall that
\begin{align}
\nonumber	\mathcal{G}_{\mathrm{D^c,I}}(t, \by)=&\sum_{b\in\{1,\cdots,b_r\}\setminus\{\pm\}}\int_{\Omega^*}\chi(|\bk-\gk|< q_1)\widetilde{\mathcal{G}}_b(t, \bk)\Phi_b(\by;\bk) d\bk  \\
&~~ +\sum_{b\geqslant b_r+1} \int_{\Omega^*}\widetilde{\mathcal{G}}_b(t, \bk)\Phi_b(\by;\bk) d\bk,
\end{align}
By employing the integration by parts, it yields
\begin{align}
\nonumber	\widetilde{\mathcal{G}}_b(t, \bk)=&-\mathrm{i}\int_0^t e^{\mathrm{i}(E_b(\bk)-E_D)\frac{s}{\veps}}e^{-\mathrm{i}E_b(\bk)\frac{t}{\veps}} \bla\Phi_b(\by;\bk), R_2(s, \veps\by)\bra ds \\
\nonumber	=&~\frac{-\veps}{E_b(\bk)-E_D}\Big[e^{-\mathrm{i}E_D\frac{t}{\veps}}\bla\Phi_b(\by;\bk), R_2(t, \veps\by)\bra-e^{-\mathrm{i}E_b(\bk)\frac{t}{\veps}}\bla\Phi_b(\by;\bk), R_2(0, \veps\by)\bra\Big] \\
&~+\frac{\veps}{E_b(\bk)-E_D}\int_0^t e^{\mathrm{i}(E_b(\bk)-E_D)\frac{s}{\veps}}e^{-\mathrm{i}E_b(\bk)\frac{t}{\veps}} \bla\Phi_b(\by;\bk), \ptl_s R_2(s, \veps\by)\bra ds.
\end{align}

According to Corollary \ref{corollary:br}, if $b\in\{1,\cdots,b_r\}\setminus\{\pm\},~|\bk-\gk|<q_1$ or $b\geqslant b_r+1,~\bk\in\Omega^*$, $|E_b(\bk)-E_D|$ have a positive lower bound. Thus, the following assertion holds for $t\in[0,T]$,
\begin{align}
\nonumber	&~\|\mathcal{G}_{\mathrm{D^c,I}}(t, \by)\|^2_{H^s(\br^2)}\\
\nonumber	\approx& \sum_{b\in\{1,\cdots,b_r\}\setminus\{\pm\}}\int_{\Omega^*}\chi(|\bk-\gk|< q_1) |\widetilde{\mathcal{G}}_b(t, \bk)|^2 d\bk+\sum_{b\geqslant b_r+1}\int_{\Omega^*}(1+E_b(\bk))^{\frac{2s}{\sigma}} |\widetilde{\mathcal{G}}_b(t, \bk)|^2 d\bk \\
\nonumber	\leqslant &~ C\veps^2(\|R_2(0,\veps \by)\|_{H^{s}(\br^2)}^2+\|R_2(t,\veps \by)\|_{H^{s}(\br^2)}^2) \\
\nonumber	&~ +C\veps^2 t \int_0^t \sum_{b\notin\{\pm\}}\int_{\Omega^*}(1+E_b(\bk))^{\frac{2s}{\sigma}}\bav\bla\Phi_b(\by;\bk),\ptl_s R_2(s, \veps \by)\bra\bav^2 d\bk ~ds \\
\nonumber	\leqslant&~ C\veps^4 + C\veps^2 t^2 \sup_{0\leqslant t\leqslant T}\|\ptl_t R_2(t,\veps \by)\|_{H^{s}(\br^2)}^2 \\
\leqslant&~ C\veps^4.
\end{align}

Now, it remains to show the estimate of $\mathcal{G}_{\mathrm{D^c,II}}(t, \by)$. When  $b\in\{1,\cdots,b_r\}\setminus\{\pm\}$, the spectral band $E_b(\bk)$ are uniform bounded, it is sufficient to justify under the $L^2-$norm.

Owing to $\bk\in\Omega^*$, $|\bk-\gk|\geqslant q_1$, for any $\bm\in\bz^2$,
\begin{equation}
	|\bm\vec{\bk}+\bk-\gk|\geqslant C(1+|\bm|).
\end{equation}
Therefore, an analogous argument used in $\mathcal{G}_{\mathrm{D,III}}$ indicates that
\begin{equation}
\sup_{0\leqslant t\leqslant T}\|\mathcal{G}_{\mathrm{D^c,II}}(t, \by)\|_{L^2(\br^2)}\leqslant C\veps^2.
\end{equation}

Finally, by invoking the fact of \eqref{id:weightnorm}\eqref{id:rescaling1}, we arrive at
\begin{equation}
	\sup_{0\leqslant t\leqslant T}\|G_2(t,\bx)\|_{H_{\veps}^s}=\sup_{0\leqslant t\leqslant T}\|\mathcal{G}(t,\by)\|_{H^s(\br^2)}\leqslant C\veps^2.
\end{equation}

Consequently, we finish the proof of Proposition \ref{prop:g2g3}, and therewith also Theorem \ref{thm:main}.

\hfill$\square$

~\newline

\noindent{\bf Acknowledgments:}
The authors thank Prof. Wen-An Yong and Dr. Pipi Hu for stimulating discussions. This work was supported by National Natural Science Foundation of China under grant $11871299$.

\begin{appendices}

\section{Dirac points in shallow honeycomb potentials}\label{appendix:intersec}

In this Appendix we prove that the assumptions of Theorem \ref{thm:diracpoint} can be satisfied for sufficient small honeycomb potentials. We treat $\hsgm_{\epsilon}=(-\Delta)^{\frac{\sigma}{2}}+\epsilon V(\by)$ as a perturbation to the fractional Laplacian $(-\Delta)^{\frac{\sigma}{2}}$ in $H_{\gk}^{\sigma}(\br^2/\Lambda)$. The following Lemma can be obtained by a direct justification.

\begin{lemma}
	
Let $1<\sigma\leqslant 2$, consider the $L_{\gk}^2(\br^2/\Lambda)-$eigenvalue problem of $(-\Delta)^{\frac{\sigma}{2}}$. The lowest eigenvalue $E^{(0)}=|\gk|^{\sigma}$ is of multiplicity three with the corresponding eigenspace spanned by  $e^{\mathrm{i}\gk\cdot\by}$, $e^{\mathrm{i}R\gk\cdot\by}$ and $e^{\mathrm{i}R^2\gk\cdot\by}$.

According to the orthogonal decomposition of $L_{\gk}^2(\br^2/\Lambda)$ under the rotational operator $\mathcal{R}$ in \eqref{lk2decomp}, and $\tau=e^{\mathrm{i}2\pi/3}$. Then, $E^{(0)}$ is a simple eigenvalue in $L_{\gk,\nu}^2$ with the corresponding normalized orthogonal eigenfunction as follows
\begin{equation}\label{phi0}
	\Phi^{(0)}_{\nu}(\by)=\frac{1}{\sqrt{3|\Omega|}~}\big[e^{\mathrm{i}\gk\cdot\by}+\bar{\nu}e^{\mathrm{i}R\gk\cdot\by}+\nu e^{\mathrm{i}R^2\gk\cdot\by}\big]\in L^2_{\gk,\nu}(\br^2/\Lambda),\quad \nu\in\{1,~\tau,~\bar{\tau}\}.
\end{equation}

\end{lemma}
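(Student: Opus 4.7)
\bigskip

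\noindent\textbf{Proof proposal.} The plan is to diagonalize $(-\Delta)^{\sigma/2}$ directly via the Fourier basis of $L^2_\gk(\br^2/\Lambda)$, then use the $\mathcal{R}$-symmetry to produce the three one-dimensional eigenspaces. First, for any $\Phi\in H^\sigma_{\gk}(\br^2/\Lambda)$, the expansion $\Phi(\by)=\sum_{\bm\in\bz^2}\hat{\Phi}(\gk+\bm\vec{\bk})e^{\mathrm{i}(\gk+\bm\vec{\bk})\cdot\by}$ together with the definition \eqref{id:delta} shows that $(-\Delta)^{\sigma/2}$ acts diagonally, with each mode $e^{\mathrm{i}(\gk+\bm\vec{\bk})\cdot\by}$ being an eigenfunction of eigenvalue $|\gk+\bm\vec{\bk}|^\sigma$. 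Hence the $L^2_\gk$-spectrum is exactly $\{|\gk+\bm\vec{\bk}|^\sigma:\bm\in\bz^2\}$, and determining the lowest eigenvalue together with its multiplicity reduces to a purely geometric lattice question.

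Next I will argue that $|\gk+\bm\vec{\bk}|\geqslant|\gk|$ for all $\bm\in\bz^2$, with equality if and only if $\gk+\bm\vec{\bk}\in\{\gk,R\gk,R^2\gk\}$. The inclusion $R\gk-\gk,\,R^2\gk-\gk\in\Lambda^*$ (the high-symmetry property recalled before \eqref{lk2decomp}) immediately yields that these three vectors all have the common length $|\gk|$ and all lie in the coset $\gk+\Lambda^*$; so they do appear in the Fourier spectrum. To rule out further minimizers I will use the standard description of $\Lambda^*$ as a hexagonal lattice: the Voronoi cell around the origin is the hexagon whose six vertices are precisely the $\mathcal{R}$-orbits of $\gk$ and $\gk'$. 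Since $\gk$ sits on the boundary of that Voronoi cell, its three neighbours in the coset $\gk+\Lambda^*$ having the same distance to $0$ are exactly $\gk,R\gk,R^2\gk$, and all other lattice translates are strictly further. This identifies $E^{(0)}=|\gk|^\sigma$ as the lowest eigenvalue with a three-dimensional eigenspace $\mathrm{span}\{e^{\mathrm{i}\gk\cdot\by},e^{\mathrm{i}R\gk\cdot\by},e^{\mathrm{i}R^2\gk\cdot\by}\}$.

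Finally I will decompose this three-dimensional eigenspace under $\mathcal{R}$. Because $\mathcal{R}$ commutes with $(-\Delta)^{\sigma/2}$ and permutes the three plane waves cyclically (using $\mathbf{a}\cdot R^*\by=R\mathbf{a}\cdot\by$, one checks $\mathcal{R}e^{\mathrm{i}\gk\cdot\by}=e^{\mathrm{i}R\gk\cdot\by}$ and so on, with $R^3=\mathrm{Id}$), the eigenspace carries the regular representation of $\bz/3\bz$. Diagonalizing via the discrete Fourier transform of $\bz/3\bz$ produces the three $\mathcal{R}$-eigenvectors
\begin{equation*}
\Phi^{(0)}_\nu(\by)=\frac{1}{\sqrt{3|\Omega|}}\bigl[e^{\mathrm{i}\gk\cdot\by}+\bar\nu e^{\mathrm{i}R\gk\cdot\by}+\nu e^{\mathrm{i}R^2\gk\cdot\by}\bigr],\qquad \nu\in\{1,\tau,\bar\tau\},
\end{equation*}
and a direct substitution shows $\mathcal{R}\Phi^{(0)}_\nu=\nu\Phi^{(0)}_\nu$ (using $\nu^2=\bar\nu$ for the cube roots of unity). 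Since the three plane waves are mutually orthogonal in $L^2(\Omega)$ with squared norm $|\Omega|$, the prefactor $1/\sqrt{3|\Omega|}$ makes $\Phi^{(0)}_\nu$ a unit vector, and simplicity within each $L^2_{\gk,\nu}$ is automatic from the one-dimensionality of each isotypic component.

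The main (and really only) obstacle is the geometric step in the second paragraph: verifying that $|\gk|$ is attained only on the $R$-orbit of $\gk$ within $\gk+\Lambda^*$. Once one pictures the dual hexagonal lattice and its Voronoi cell this is elementary, but writing it cleanly requires either a short direct computation with the coordinates of $\gk=\tfrac13(\bk_1-\bk_2)$ in the basis $\{\bk_1,\bk_2\}$, or an appeal to the standard fact that the Brillouin zone corners form exactly two $\mathcal{R}$-orbits. All remaining steps (Fourier diagonalization, orthogonality, $\mathcal{R}$-eigenvector computation) are routine.
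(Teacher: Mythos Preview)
Your proposal is correct and is exactly the ``direct justification'' the paper alludes to; the paper itself provides no proof beyond that phrase. The Fourier diagonalization, the lattice-geometry identification of the three minimizers $\gk,R\gk,R^2\gk$, and the $\bz/3\bz$-diagonalization of the eigenspace are the standard steps, and your handling of each is sound.
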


Now we show the following spectral band degeneracy of $\gk-$quasi-periodic eigenvalue problem with a small amplitude potential, hence that conical existence of Dirac points. For a generic potential $V(\by)$, we refer the readers to \cite{fefferman2017topologically,fefferman2012honeycomb}

\begin{proposition}\label{thm:intersec}

Let $V(\by)$ be a honeycomb potential and $\hsgm_{\epsilon}=(-\Delta)^{\frac{\sigma}{2}}+\epsilon V(\by)$. Suppose that $\hat{V}(0,1)$, a Fourier coefficient of $V(\by)$, is not vanishing, i.e.,
\begin{equation}
	\hat{V}(0,1)=\frac{1}{|\Omega|}\int_{\Omega}e^{-\mathrm{i}\bk_2\cdot\by}V(\by)d\by\neq 0.
\end{equation}
Then there exists a constant $\epsilon_0>0$, and mappings $\epsilon\mapsto E_D^{\epsilon}$, $\epsilon\mapsto\Phi_1^{\epsilon}(\by)\in L^2_{\gk, \tau}$ such that for all $\epsilon\in(-\epsilon_0, \epsilon_0)$,
\begin{enumerate}
	\item[(1)] $E_D^{\epsilon}$ is a two-fold degenerate $L^2_{\gk}$-eigenvalue of $\hsgm_{\epsilon}$ with the following
	\begin{equation}
		E_D^{\epsilon}=E^{(0)}+\epsilon\big(\hat{V}(0,0)-\hat{V}(0,1)\big)+\mathcal{O}(\epsilon^2).
	\end{equation}
\item[(2)] $E_D^{\epsilon}$ is a simple $L^2_{\gk,\tau}-$eigenvalue of $\hsgm_{\epsilon}$ with the corresponding eigenspace spanned by the normalized eigenfunction $\Phi_1^{\epsilon}(\by)\in L^2_{\gk,\tau}$.
\item[(3)] The Dirac velocity $v^{\sigma}_{F,\epsilon}$ is approximated by
	\begin{equation}\label{id:velocity1}
		v^{\sigma}_{F,\epsilon}=\frac12\sigma(\frac{4\pi}{3})^{\sigma-1}+\mathcal{O}(\epsilon)\neq 0.
	\end{equation}
\end{enumerate}
Thus, by Theorem \ref{thm:diracpoint}, $(E_D^\epsilon, \gk)$ is a Dirac point.
\end{proposition}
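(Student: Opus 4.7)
The plan is to apply analytic perturbation theory on each symmetry-adapted subspace, exploit a conjugation-plus-inversion symmetry to lock two of the three resulting eigenvalues together, and then read off the Dirac velocity from the unperturbed eigenfunctions. Since $V(\by)$ is bounded on $\br^{2}/\Lambda$, the operator $\mathcal{H}^{\sigma}_{\epsilon}=(-\Delta)^{\sigma/2}+\epsilon V$ is a Kato-analytic family in $\epsilon$ with compact resolvent on each invariant subspace $L^{2}_{\gk,\nu}$, $\nu\in\{1,\tau,\bar\tau\}$ (invariance holds because $V$ is $\mathcal R$-invariant, so the orthogonal decomposition \eqref{lk2decomp} is preserved). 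At $\epsilon=0$ the eigenvalue $E^{(0)}=|\gk|^{\sigma}$ is simple in each summand with explicit eigenfunction $\Phi^{(0)}_{\nu}$ given by \eqref{phi0}. Standard Kato--Rellich perturbation theory then produces, for $|\epsilon|$ small, three analytic branches $\epsilon\mapsto E^{\epsilon}_{\nu}$ and $\epsilon\mapsto \Phi^{\epsilon}_{\nu}\in L^{2}_{\gk,\nu}$ of simple eigenpairs.

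The two-fold degeneracy of $E_{D}^{\epsilon}:=E^{\epsilon}_{\tau}$ would come from the antiunitary symmetry $\mathcal C\Phi(\by):=\overline{\Phi(-\by)}$. Because $V$ is real and inversion-symmetric, a short check in Fourier shows $\mathcal C$ commutes with $\mathcal{H}^{\sigma}_{\epsilon}$; because complex conjugation swaps $\tau$ and $\bar\tau$, one verifies $\mathcal C:L^{2}_{\gk,\tau}\to L^{2}_{\gk,\bar\tau}$ is an isometry. Hence $E^{\epsilon}_{\tau}=E^{\epsilon}_{\bar\tau}$, and I would set $\Phi^{\epsilon}_{1}:=\Phi^{\epsilon}_{\tau}$, $\Phi^{\epsilon}_{2}:=\mathcal C\Phi^{\epsilon}_{1}$. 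To rule out an accidental coincidence $E^{\epsilon}_{1}=E^{\epsilon}_{\tau}$ and to obtain the expansion in (1), I compute the first-order Rayleigh--Schr\"odinger corrections $\langle\Phi^{(0)}_{\nu},V\Phi^{(0)}_{\nu}\rangle_{\Omega}$. Writing $\mathbf{q}_{i}:=R^{i}\gk$ for $i=0,1,2$, the six differences $\mathbf{q}_{i}-\mathbf{q}_{j}$ with $i\ne j$ are exactly $\pm\bk_{1},\pm\bk_{2},\pm(\bk_{1}+\bk_{2})$, which together form a single $R$-orbit in $\Lambda^{*}$; by the $\mathcal R$-invariance and evenness of $V$ they share the common real Fourier coefficient $\hat V(0,1)$. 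Collecting the weights $\bar c^{\nu}_{j}c^{\nu}_{i}$ against this coefficient and using $1+\tau+\bar\tau=0$ yields $\langle\Phi^{(0)}_{\tau},V\Phi^{(0)}_{\tau}\rangle=\hat V(0,0)-\hat V(0,1)$ and $\langle\Phi^{(0)}_{1},V\Phi^{(0)}_{1}\rangle=\hat V(0,0)+2\hat V(0,1)$. Thus $E^{\epsilon}_{1}-E^{\epsilon}_{\tau}=3\epsilon\hat V(0,1)+O(\epsilon^{2})\ne 0$ under the hypothesis $\hat V(0,1)\ne 0$, establishing parts (1)--(2).

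For (3) I would compute $v^{\sigma}_{F,0}$ directly and extend by analyticity. Since $\Phi^{(0)}_{\bar\tau}$ is a finite trigonometric sum with all three active Fourier modes of common length $|\gk|=4\pi/3$, the formula \eqref{psgm} gives $\bp\Phi^{(0)}_{\bar\tau}$ explicitly, and only the three diagonal modes survive pairing with $\Phi^{(0)}_{\tau}$, yielding
\begin{equation*}
\langle\Phi^{(0)}_{\tau},\mathrm i\bp\Phi^{(0)}_{\bar\tau}\rangle_{\Omega}=-\tfrac{\sigma}{3}|\gk|^{\sigma-2}\bigl(\mathbf{q}_{0}+\bar\tau\,\mathbf{q}_{1}+\tau\,\mathbf{q}_{2}\bigr).
\end{equation*}
A direct evaluation using $1+\tau+\bar\tau=0$ and $\tau-\bar\tau=\mathrm i\sqrt{3}$ gives $\mathbf{q}_{0}+\bar\tau\,\mathbf{q}_{1}+\tau\,\mathbf{q}_{2}=2\pi(-\mathrm i,1)^{T}$, so after the phase normalization of Remark \ref{remark:realvalue} one finds $v^{\sigma}_{F,0}=\tfrac{2\pi\sigma}{3}|\gk|^{\sigma-2}=\tfrac12\sigma(4\pi/3)^{\sigma-1}$. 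Because $\Phi^{\epsilon}_{1,2}$ depend analytically on $\epsilon$ in $H^{\sigma}(\br^{2}/\Lambda)$ and $\bp$ is bounded between appropriate Sobolev spaces, $v^{\sigma}_{F,\epsilon}$ is analytic in $\epsilon$, which gives \eqref{id:velocity1}; in particular $v^{\sigma}_{F,\epsilon}\ne 0$ for small $\epsilon$. With (1)--(3) in hand, Theorem \ref{thm:diracpoint} promotes $(\gk,E_{D}^{\epsilon})$ to a genuine Dirac point.

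The routine ingredients are the Kato--Rellich perturbation theory and the Rayleigh--Schr\"odinger expansions; the main bookkeeping obstacle is the combinatorial step tracking the six differences $\mathbf{q}_{i}-\mathbf{q}_{j}$ as a single $R$-orbit in $\Lambda^{*}$, and then pairing the $\tau$-weighted vector sums against $(1,\mathrm i)^{T}$ to recover exactly the algebraic structure predicted by \eqref{id:phi1pphi2}.
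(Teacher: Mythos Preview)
Your proposal is correct and follows essentially the same route as the paper: restrict to each $\mathcal{R}$-eigenspace where $E^{(0)}$ is simple, apply perturbation theory to get the first-order corrections $\hat V(0,0)\pm\hat V(0,1)$ (the paper phrases this as Lyapunov--Schmidt reduction rather than Kato--Rellich, but the content is identical), and compute $v_{F,0}^{\sigma}$ directly from the explicit $\Phi^{(0)}_{\tau},\Phi^{(0)}_{\bar\tau}$. Your explicit identification of the antiunitary $\mathcal C\Phi(\by)=\overline{\Phi(-\by)}$ as the mechanism forcing $E^{\epsilon}_{\tau}=E^{\epsilon}_{\bar\tau}$ is a welcome clarification of what the paper records only as ``by symmetry.''
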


\begin{proof}
	
Note that $E^{(0)}$ is a simple $L^2_{\gk,\nu}$-eigenvalue of the fractional Laplacian $(-\Delta)^{\frac{\sigma}{2}}$ for each $\nu\in \{1,~\tau,~\bar{\tau}\}$. Denote $E^{\epsilon}_\nu$ as the $L^2_{\gk,\nu}$-eigenvalue of $\hsgm_{\epsilon}$. Namely, consider the perturbed $L^2_{\gk,\nu}$-eigenvalue problem,
\begin{align}
\big((-\Delta)^{\frac{\sigma}{2}}+\epsilon V(\by)\big)\Phi^{(\epsilon)}_{\nu}(\by)= E_{\nu}^{\epsilon}\Phi^{(\epsilon)}_{\nu}(\by), \quad \Phi^{(\epsilon)}_{\nu}(\by)\in L^2_{\gk,\nu}.
\end{align}
Evidently, the eigenvalue $E^{\epsilon}_\nu$ remains simple in each subspace $L^2_{\gk,\nu}$ by a perturbation argument \cite{reed1978methods}. By symmetry, we know that $E^{\epsilon}_{\tau}=E^{\epsilon}_{\bar{\tau}}$ which is denoted by $E_D^{\epsilon}$. So we only need to show that  $E^{\epsilon}_{1}$ differs from $E^{\epsilon}_D$ and the Dirac velocity does not vanish.

With a Lyapunov-Schmidt reduction, we obtain that for a sufficiently small $\epsilon$,
\begin{equation}\label{eigen:epsilon}
	E^{\epsilon}_\nu =E^{(0)}+\epsilon \bla\Phi^{(0)}_{\nu}, V(\by)\Phi^{(0)}_{\nu}\bra_{\Omega}+O(\epsilon^2),~ \nu=1,~\tau,~\bar{\tau}.
\end{equation}
and
\begin{align}\label{id:velocity}
	v_{F}^{\sigma,\epsilon}=-\frac12\overline{\bla \Phi_1^{\epsilon}(\by), \mathrm{i}\boldsymbol{p}^{\sigma}\Phi_2^{\epsilon}(\by)\bra_{\Omega}}\cdot
	\begin{pmatrix}
	1 \\ \mathrm{i}
	\end{pmatrix}= -\frac12\overline{\bla \Phi_1^{(0)}(\by), \mathrm{i}\boldsymbol{p}^{\sigma}\Phi_2^{(0)}(\by)\bra_{\Omega}}\cdot\begin{pmatrix}
	1 \\ \mathrm{i}
	\end{pmatrix}+\mathcal{O}(|\epsilon|).
\end{align}
Here, we use the symmetry arguments which is much simpler than that of \cite{fefferman2012honeycomb}. The key is to compute \eqref{eigen:epsilon} and \eqref{id:velocity}. Since $V(\by)$ is rotational invariance, we continue to derive the following three parts:
\begin{align}\label{id:v01}
\nonumber	|\Omega|\hat{V}(0,0)= \bla e^{\mathrm{i}\gk\cdot\by},~V(\by)e^{\mathrm{i}\gk\cdot\by}\bra_{\Omega}=\bla e^{\mathrm{i}R\gk\cdot\by},~V(\by)e^{\mathrm{i}R\gk\cdot\by}\bra_{\Omega}=\bla e^{\mathrm{i}R^2\gk\cdot\by},~V(\by)e^{\mathrm{i}R^2\gk\cdot\by}\bra_{\Omega};\\
\nonumber	|\Omega|\hat{V}(0,1)= \bla e^{\mathrm{i}\gk\cdot\by},~V(\by)e^{\mathrm{i}R\gk\cdot\by}\bra_{\Omega}=\bla e^{\mathrm{i}R\gk\cdot\by},~V(\by)e^{\mathrm{i}R^2\gk\cdot\by}\bra_{\Omega}=\bla e^{\mathrm{i}R^2\gk\cdot\by},~V(\by)e^{\mathrm{i}\gk\cdot\by}\bra_{\Omega};\\
	|\Omega|\hat{V}(-1,0)= \bla e^{\mathrm{i}\gk\cdot\by},~V(\by)e^{\mathrm{i}R^2\gk\cdot\by}\bra_{\Omega}=\bla e^{\mathrm{i}R\gk\cdot\by},~V(\by)e^{\mathrm{i}\gk\cdot\by}\bra_{\Omega}=\bla e^{\mathrm{i}R^2\gk\cdot\by},~V(\by)e^{\mathrm{i}R\gk\cdot\by}\bra_{\Omega}.
\end{align}
And, one can also verify that $\hat{V}(-1,0)=\hat{V}(0,1)$, i.e.,
\begin{align}\label{id:gammabeta}
\int_{\Omega} e^{-\text{i}\gk\cdot\by}V(\by)e^{\text{i}R^2\gk\cdot\by}d\by = \int_{\Omega} e^{-\text{i}R\gk\cdot\by}V(\by)e^{\text{i}\gk\cdot\by}d\by = \int_{\Omega} e^{\text{i}R\gk\cdot\by}V(\by)e^{-\text{i}\gk\cdot\by}d\by.
\end{align}

Substituting \eqref{phi0} into \eqref{eigen:epsilon} and employing the calculations in \eqref{id:v01}, we immediately get
\begin{equation}
	E^{\epsilon}_\tau=E^{(0)}+\epsilon  (\hat{V}(0,0)-\hat{V}(0,1))+\mathcal{O}(\epsilon^2)
\end{equation}
and
\begin{equation}
	E^{\epsilon}_1=E^{(0)}+\epsilon  (\hat{V}(0,0)+2\hat{V}(0,1))+\mathcal{O}(\epsilon^2)
\end{equation}
As long as $\hat{V}(0,1)\neq0$, the three-fold $L^2_{\gk}-$eigenvalue, $E^{(0)}=|\gk|^{\sigma}$, splits into two distinct eigenvalues continuously dependent on $\epsilon$: one is a two-fold eigenvalue $E_D^{\epsilon}$ in $L^2_{\gk,\tau}\oplus L^2_{\gk,\bar{\tau}}$, and the other is a simple $L^2_{\gk,1}-$eigenvalue $E_{1}^{\epsilon}$. This proves assertions $(1)-(2)$ of Proposition \ref{thm:intersec}.

On the other hand, $\Phi_1^{(0)}(\by)=\Phi_{\tau}^{(0)}(\by)$ and $\Phi_2^{(0)}(\by)=\Phi_{\bar{\tau}}^{(0)}(\by)$, we first compute
\begin{equation}
\begin{split}
	-\bla \Phi_1^{(0)}(\by), \text{i}\boldsymbol{p}^{\sigma}\Phi_2^{(0)}(\by)\bra_{\Omega} =\frac13 \sigma\big|\gk\big|^{\sigma-2}\Big[\gk+\bar{\tau}R\gk+\tau R^2\gk\Big]=\frac{2\pi}3 \sigma\big|\gk\big|^{\sigma-2}
\begin{pmatrix}
1 \\
\text{i}
\end{pmatrix}.
\end{split}
\end{equation}
Substituting above into \eqref{id:velocity}, we immediately obtain \eqref{id:velocity1}. Thus we complete the proof.

\end{proof}

\end{appendices}

\bibliographystyle{plain}

\bibliography{frac}

\end{document}